\documentclass[10pt, a4paper, leqno]{amsart}

\usepackage[abbrev]{amsrefs}
\usepackage{backref}
\usepackage{lmodern}
\usepackage{microtype}
\usepackage{enumerate}
\usepackage{paralist}

\newcommand{\bfrac}[2]{{#1/#2}}

\usepackage[top=4cm,bottom=4cm,right=3cm,left=3cm]{geometry}

\newtheorem{theorem}{Theorem}
\newtheorem{proposition}{Proposition}[section]
\newtheorem{claim}{Claim}
\newtheorem{lemma}[proposition]{Lemma}
\newtheorem{corollary}[proposition]{Corollary}
\theoremstyle{definition}
\newtheorem{remark}{Remark}[section]
\numberwithin{equation}{section}

\newcommand{\N}{{\mathbb N}}
\newcommand{\R}{{\mathbb R}}

\newcommand{\abs}[1]{\lvert #1 \rvert}
\newcommand{\bigabs}[1]{\bigl\lvert #1 \bigr\rvert}

\newcommand{\norm}[1]{\lVert #1 \rVert}

\newcommand{\weakto}{\rightharpoonup}
\newcommand{\st}{\;:\;}

\newcommand{\dif}{\,\mathrm{d}}

\newenvironment{proofclaim}[1][Proof of the claim]{\begin{proof}[#1]}{\end{proof}}

\allowdisplaybreaks

\title[Existence of groundstates]{Existence of groundstates for a class\\ of nonlinear Choquard equations}
\author{Vitaly Moroz}
\address{Swansea University\\ Department of Mathematics\\ Singleton Park\\
Swansea\\ SA2~8PP\\ Wales, United Kingdom}	
\email{V.Moroz@swansea.ac.uk}

\author{Jean Van Schaftingen}
\address{Universit\'e Catholique de Louvain\\ Institut de Recherche en Math\'ematique et Phy\-sique\\ Chemin du Cyclotron 2 bte L7.01.01\\ 1348 Louvain-la-Neuve \\ Belgium}
\email{Jean.VanSchaftingen@uclouvain.be}

\keywords{Stationary Choquard equation; stationary nonlinear Schr\"odinger--Newton equation; stationary Hartree equation;
Riesz potential; nonlocal semilinear elliptic problem; Poho\v{z}aev identity; existence; variational method; groundstate; mountain pass; symmetry; polarization}

\subjclass[2010]{35J61 (Primary) 35B09, 35B33, 35B40, 35Q55, 45K05 (Secondary)}

\date{\today}

\begin{document}

\begin{abstract}
We prove the existence of a nontrivial solution \(u \in H^1 (\R^N)\) to the nonlinear Choquard equation
\[
 - \Delta u + u = \bigl(I_\alpha \ast F (u)\bigr) F' (u) \quad \text{in \(\R^N\),}
\]
where \(I_\alpha\) is a Riesz potential, under almost necessary conditions on the nonlinearity \(F\)
in the spirit of Berestycki and Lions.
This solution is a groundstate; if moreover \(F\) is even and monotone on \((0,\infty)\), 
then \(u\) is of constant sign and radially symmetric.
\end{abstract}

\maketitle

\tableofcontents

\section{Introduction}

We consider the problem
\begin{equation}
\tag{$\mathcal{P}$}
\label{problemMain}
 - \Delta u + u = \bigl(I_\alpha \ast F (u)\bigr) f (u) \quad \text{in \(\R^N\)},
\end{equation}
where \(N \ge 3\), \(\alpha \in (0, N)\), \(I_\alpha : \R^N \to \R\) is the Riesz potential defined for every \(x \in \R^N \setminus \{0\}\) by
\[
  I_\alpha(x)=\frac{\Gamma(\tfrac{N-\alpha}{2})}
                   {\Gamma(\tfrac{\alpha}{2})\pi^{N/2}2^{\alpha} \abs{x}^{N-\alpha}},
\]
\(F\in C^1(\R;\R)\) and \(f:=F'\).
Solutions of \eqref{problemMain} are formally critical points of the functional defined by
\[
 \mathcal{I} (u) = \frac{1}{2} \int_{\R^N} \abs{\nabla u}^2 + \abs{u}^2 - \frac{1}{2} \int_{\R^N} \bigl(I_\alpha \ast F (u)\bigr) F (u).
\]
We are interested in the existence of solutions to \eqref{problemMain}.

Problem \eqref{problemMain} is a semilinear elliptic problem with a nonlocal nonlinearity and covers in particular for \(N = 3\), \(\alpha = 2\) and \(F (s) = \frac{s^2}{2}\) the Choquard--Pekar equation \citelist{\cite{Pekar-1954}\cite{Lieb-1977}}, also known as the stationary Hartree equation or the Newton--Schr\"odinger equation \cite{Moroz-Penrose-Tod-1998}.
In this case the existence of solutions was proved by variational methods by E.\thinspace H.\thinspace Lieb, P.-L.\thinspace Lions and G.\thinspace Menzala \citelist{\cite{Lieb-1977}\cite{Lions-1980}\cite{Menzala-1980}} and also by ordinary differential equations techniques \citelist{\cite{Tod-Moroz-1999}\cite{Moroz-Penrose-Tod-1998}\cite{Choquard-Stubbe-Vuffray-2008}}.
In the more general case \(F (s) = \frac{s^p}{p}\), problem \eqref{problemMain} is known to have a solution if and only if
\(\frac{N + \alpha}{N - 2} < p < \frac{N + \alpha}{N}\) \citelist{\cite{Ma-Zhao-2010}*{p.\thinspace 457}\cite{MorozVanSchaftingenGround}*{theorem~1}} (see also \cite{GenevVenkov2012}*{Lemma 2.7}).

The existence results up until now were only available when the nonlinearity \(F\) is \emph{homogeneous}.
This situation contrasts with the striking existence result for the corresponding local problem
\begin{equation}
\label{problemBL}
 - \Delta u + u = g (u)\quad \text{in \(\R^N\)},
\end{equation}
which can be considered as a limiting problem of \eqref{problemMain} when \(\alpha \to 0\), with \(g = F f\).
H.\thinspace Berestycki and P.-L.\thinspace Lions \cite{BerestyckiLions1983}*{theorem 1} have proved that \eqref{problemBL} has a nontrivial solution if nonlinearity \(g\in C(\R;\R)\) satisfies the assumptions
\begin{gather*}
\tag{$g_1$}\label{assumptWeakg} \text{there exists \(C > 0\) such that for every \(s \in \R\), }
 s g (s) \le C \bigl(\abs{s}^2 + \abs{s}^\frac{2N}{N - 2}\bigr),\\
\tag{$g_2$}
\label{assumptStrongg}
 \lim_{s \to 0} \frac{G (s)}{\abs{s}^2} < \frac{1}{2} \qquad \text{ and } \qquad \limsup_{\abs{s} \to \infty} \frac{G (s)}{\abs{s}^\frac{2 N}{N - 2}} \le 0,\\
\label{assumptNontrivialg}
\tag{$g_3$} \text{there exists \(s_0 \in \R \setminus \{0\}\) such that \(G (s_0) > \frac{s_0^2}{2}\)},
\end{gather*}
where \(G (s) = \int_{0}^s g (\sigma)\dif\sigma\)  (and if \(g = F f\), then \(G = \frac{F^2}{2}\)).
They also proved that if \(u \in L^\infty_{\mathrm{loc}} (\R^N)\) is a finite energy solution of \eqref{problemBL} then \(u\) satisfies the Poho\v zaev identity  \cite{BerestyckiLions1983}*{Proposition 1}
\begin{equation}\label{PohozaevBL}
  \frac{N - 2}{2} \int_{\R^N} \abs{\nabla u}^2 + \frac{N}{2} \int_{\R^N} \abs{u}^2
  = N \int_{\R^N} G (u).
\end{equation}
This, in particular, implies that assumptions \eqref{assumptWeakg}, \eqref{assumptStrongg} and \eqref{assumptNontrivialg}
are ``almost necessary'' for the existence of nontrivial finite energy solutions of \eqref{problemBL}.
Indeed, the necessity of \eqref{assumptNontrivialg} follows directly from \eqref{PohozaevBL}.
For  \eqref{assumptWeakg} and \eqref{assumptStrongg}, if \(f (s) = s^p\) with \(s \not \in (1, \frac{N + 2}{N - 2})\) then \eqref{PohozaevBL} immediately implies that \eqref{problemBL} does not have any bounded finite-energy nontrivial solution.

In this spirit, we prove the existence of solutions to Choquard equation \eqref{problemBL},
assuming that nonlinearity \(f\in C(\R;\R)\) satisfies the \emph{growth assumption}\/:
\begin{equation}
 \tag{$f_1$}\label{assumptWeakf} \text{there exists \(C > 0\) such that for every \(s \in \R\), }
 \abs{s f (s)} \le C \bigl(\abs{s}^\frac{N + \alpha}{N} + \abs{s}^\frac{N + \alpha}{N - 2}\bigr),
\end{equation}
its antiderivative \(F : s \in \R \mapsto \int_{0}^s f (\sigma)\dif\sigma \) is \emph{subcritical}\/:
\begin{equation}
\tag{$f_2$}
\label{assumptStrongf}
 \lim_{s \to 0} \frac{F (s)}{\abs{s}^\frac{N + \alpha}{N}} = 0 \qquad \text{ and } \qquad \lim_{\abs{s} \to \infty} \frac{F (s)}{\abs{s}^\frac{N + \alpha}{N - 2}} = 0,
\end{equation}
and \emph{nontrivial}:
\begin{equation}
\label{assumptNontrivialf}
\tag{$f_3$} \text{there exists \(s_0 \in \R \setminus \{0\}\) such that \(F (s_0) \ne 0\)}.
\end{equation}
We call any weak solution \(u\)  of \eqref{problemMain} which lies in the Sobolev space \( H^1 (\R^N) \setminus \{0\}\) a \emph{groundstate} of \eqref{problemMain} if
\begin{equation}
\label{GroundStateC}
 \mathcal{I} (u) = c:= \inf \bigl\{ \mathcal{I} (v) \st v \in H^1 (\R^N) \setminus \{0\} \text{ is a weak solution of \eqref{problemMain}}\bigr\}.
\end{equation}

\begin{theorem}[Existence of a groundstate]
\label{theoremExistence}
Assume that \(N \ge 3\) and \(\alpha \in (0, N)\). If \(f \in C (\R; \R)\) satisfies \eqref{assumptWeakf}, \eqref{assumptStrongf} and \eqref{assumptNontrivialf}, then \eqref{problemMain} has a groundstate.
\end{theorem}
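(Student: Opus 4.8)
The plan is to obtain the groundstate as a mountain‑pass critical point, in the spirit of Berestycki and Lions, using the Poho\v zaev identity for \eqref{problemMain} to restore the compactness that is lost both in passing from the local problem \eqref{problemBL} to \eqref{problemMain} and in weakening the nonlinearity to \eqref{assumptStrongf}. Write \(\mathcal{D}(u) := \int_{\R^N}(I_\alpha \ast F(u))F(u)\), so \(\mathcal{I}(u) = \frac12\int_{\R^N}\abs{\nabla u}^2 + \abs{u}^2 - \frac12\mathcal{D}(u)\). First I would record, from \eqref{assumptWeakf}, the Hardy--Littlewood--Sobolev inequality and the Sobolev embedding, that \(\mathcal{I} \in C^1(H^1(\R^N);\R)\), that \(\mathcal{D}(u) \ge 0\) with \(\mathcal{D}(u) > 0\) unless \(F(u) \equiv 0\) (positive-definiteness of the Riesz kernel), and that \eqref{assumptStrongf} gives \(\mathcal{D}(u) \le C\bigl(\norm{u}_{H^1}^{2(N+\alpha)/N} + \norm{u}_{H^1}^{2(N+\alpha)/(N-2)}\bigr)\), where both exponents exceed \(2\). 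Hence \(0\) is a strict local minimum of \(\mathcal{I}\) at height \(\mathcal{I}(0) = 0\); and, choosing by \eqref{assumptNontrivialf} some \(w \in H^1(\R^N)\) with \(F(w) \not\equiv 0\) (so \(\mathcal{D}(w) > 0\)), the dilations obey
\[
 \mathcal{I}\bigl(w(\cdot/t)\bigr) = \tfrac{t^{N-2}}{2}\int_{\R^N}\abs{\nabla w}^2 + \tfrac{t^{N}}{2}\int_{\R^N}\abs{w}^2 - \tfrac{t^{N+\alpha}}{2}\mathcal{D}(w) \longrightarrow -\infty \qquad (t \to \infty).
\]
Thus \(\mathcal{I}\) has a mountain-pass geometry, and I put \(b := \inf_{\gamma \in \Gamma}\max_{t \in [0,1]}\mathcal{I}(\gamma(t)) > 0\), with \(\Gamma := \{\gamma \in C([0,1];H^1(\R^N)) \st \gamma(0) = 0,\ \mathcal{I}(\gamma(1)) < 0\}\).

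The main difficulty --- the heart of the proof --- is that the Berestycki--Lions conditions exclude the Ambrosetti--Rabinowitz condition, so that an arbitrary Palais--Smale sequence for \(\mathcal{I}\) at level \(b\) need be neither bounded nor compact; one must manufacture a \emph{bounded} such sequence that additionally carries Poho\v zaev information. To this end I would introduce the scaled functional \(\mathcal{H}(\sigma,u) := \mathcal{I}\bigl(u(\cdot/e^\sigma)\bigr)\) on \(\R \times H^1(\R^N)\), observe that the mountain-pass minimax value of \(\mathcal{I}\) coincides with that of \(\mathcal{H}\) (over paths in \(\R \times H^1(\R^N)\) joining \((0,0)\) to a point where \(\mathcal{H} < 0\)), and run a deformation/minimax argument for \(\mathcal{H}\); undoing the scaling then produces a sequence \((u_n)\) in \(H^1(\R^N)\) with \(\mathcal{I}(u_n) \to b\), \(\mathcal{I}'(u_n) \to 0\) in \(H^{-1}(\R^N)\), and the asymptotic Poho\v zaev identity \(\mathcal{P}(u_n) \to 0\), where
\[
 \mathcal{P}(u) := \tfrac{N-2}{2}\int_{\R^N}\abs{\nabla u}^2 + \tfrac{N}{2}\int_{\R^N}\abs{u}^2 - \tfrac{N+\alpha}{2}\mathcal{D}(u)
\]
arises as \(\partial_\sigma\mathcal{H}\) along the sequence.

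Next I would establish boundedness and non-vanishing. Because
\[
 \mathcal{I}(u_n) - \tfrac{1}{N+\alpha}\mathcal{P}(u_n) = \tfrac{\alpha+2}{2(N+\alpha)}\int_{\R^N}\abs{\nabla u_n}^2 + \tfrac{\alpha}{2(N+\alpha)}\int_{\R^N}\abs{u_n}^2 \longrightarrow b,
\]
the sequence \((u_n)\) is bounded in \(H^1(\R^N)\). It cannot vanish: were \(\sup_{y \in \R^N}\int_{B_1(y)}\abs{u_n}^2 \to 0\), then \(u_n \to 0\) in \(L^q(\R^N)\) for every \(q \in (2,\tfrac{2N}{N-2})\), which by \eqref{assumptStrongf} would force \(\mathcal{D}(u_n) \to 0\); then \(\mathcal{P}(u_n) \to 0\) would force \(\int_{\R^N}\abs{\nabla u_n}^2 \to 0\) and \(\int_{\R^N}\abs{u_n}^2 \to 0\) (here \(N \ge 3\) enters), contradicting \(\mathcal{I}(u_n) \to b > 0\). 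Translating the \(u_n\) suitably --- all the quantities above being translation invariant --- I may assume \(u_n \weakto u\) in \(H^1(\R^N)\) with \(u \ne 0\). Testing the weak formulation against \(\varphi \in C^\infty_c(\R^N)\) and passing to the limit --- the Hardy--Littlewood--Sobolev inequality and pointwise convergence giving \(I_\alpha \ast F(u_n) \weakto I_\alpha \ast F(u)\) in \(L^{2N/(N-\alpha)}(\R^N)\), and the Rellich--Kondrachov theorem together with \eqref{assumptWeakf} giving \(f(u_n)\varphi \to f(u)\varphi\) in \(L^{2N/(N+\alpha)}(\R^N)\) --- shows that \(u\) is a nontrivial weak solution of \eqref{problemMain}, hence \(\mathcal{I}(u) \ge c\).

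Finally I would identify the levels. Since \(u\) solves \eqref{problemMain} it satisfies \(\mathcal{P}(u) = 0\), so, by weak lower semicontinuity of \(u \mapsto \int_{\R^N}\abs{\nabla u}^2\) and of \(u \mapsto \int_{\R^N}\abs{u}^2\),
\[
 \mathcal{I}(u) = \mathcal{I}(u) - \tfrac{1}{N+\alpha}\mathcal{P}(u) \le \liminf_{n \to \infty}\Bigl(\mathcal{I}(u_n) - \tfrac{1}{N+\alpha}\mathcal{P}(u_n)\Bigr) = b,
\]
so \(c \le \mathcal{I}(u) \le b\). For the reverse inequality let \(v\) be any nontrivial weak solution of \eqref{problemMain}; then \(F(v) \not\equiv 0\) (otherwise \(-\Delta v + v = 0\), forcing \(v = 0\)), so \(\mathcal{D}(v) > 0\), and \(\phi(t) := \mathcal{I}\bigl(v(\cdot/t)\bigr)\) is continuous on \((0,\infty)\) with \(\phi(0^+) = 0\) and \(\phi(t) \to -\infty\). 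As
\[
 \phi'(t) = \tfrac{t^{N-3}}{2}\Bigl((N-2)\int_{\R^N}\abs{\nabla v}^2 + Nt^2\int_{\R^N}\abs{v}^2 - (N+\alpha)t^{\alpha+2}\mathcal{D}(v)\Bigr),
\]
the bracket is positive near \(t = 0\), increasing then decreasing on \((0,\infty)\), and vanishes at \(t = 1\) because \(\mathcal{P}(v) = 0\); hence \(t = 1\) is its unique positive zero and \(\phi\) attains its maximum over \((0,\infty)\) at \(t = 1\). Reparametrising the dilation path on \([0,1]\) and truncating it at a large \(T\) with \(\phi(T) < 0\) gives an admissible \(\gamma \in \Gamma\) along which \(\max_{t \in [0,1]}\mathcal{I}(\gamma(t)) = \phi(1) = \mathcal{I}(v)\); thus \(b \le \mathcal{I}(v)\), and taking the infimum over \(v\) gives \(b \le c\). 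Combining, \(\mathcal{I}(u) = c = b\), so \(u\) is a groundstate.
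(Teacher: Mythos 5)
Your proposal follows the same route as the paper: Jeanjean's scaled functional \(\mathcal{H}(\sigma,u)=\mathcal{I}(u(\cdot/e^\sigma))\) to manufacture a Poho\v zaev--Palais--Smale sequence, boundedness via \(\mathcal{I}(u_n)-\frac{1}{N+\alpha}\mathcal{P}(u_n)\), Lions' vanishing lemma plus \eqref{assumptStrongf} for nontriviality of the weak limit, and the Jeanjean--Tanaka dilation-path argument to obtain \(b\le c\). The one small stylistic departure is that you use the positive-definiteness of the Riesz kernel to get \(\mathcal{D}(w)>0\) directly, where the paper argues by density from the indicator \(s_0\chi_{B_1}\); both are fine.

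There is, however, a genuine gap. Twice in the level-identification step you invoke the Poho\v zaev identity \(\mathcal{P}(u)=0\) for a mere \(H^1(\R^N)\) weak solution \(u\): once to conclude \(\mathcal{I}(u)\le b\), and once (through the derivative of \(\phi(t)=\mathcal{I}(v(\cdot/t))\)) to show the dilation path for a solution \(v\) attains its strict maximum at \(t=1\). Under the growth hypothesis \eqref{assumptWeakf} alone, this identity is not automatic for \(H^1\) solutions. The standard derivation tests the equation against truncations of \(x\cdot\nabla u\) and integrates by parts, which requires \(u\in W^{2,2}_{\mathrm{loc}}\); but the upper Sobolev-critical exponent \(\frac{N+\alpha}{N-2}\) in \eqref{assumptWeakf} defeats the usual elliptic bootstrap, so this regularity must be earned. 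The paper devotes a full section to exactly this point: a nonlocal Brezis--Kato estimate showing that solutions of \(-\Delta u + u = (I_\alpha\ast Hu)K\) with \(H,K\in L^{2N/\alpha}+L^{2N/(\alpha+2)}\) lie in \(L^p(\R^N)\) for all \(p<\frac{N}{\alpha}\cdot\frac{2N}{N-2}\), which then yields \(I_\alpha\ast F(u)\in L^\infty\) and hence \(u\in W^{2,p}_{\mathrm{loc}}\) for all \(p\), after which the Poho\v zaev identity can be justified by the cutoff argument. Asserting ``since \(u\) solves \eqref{problemMain} it satisfies \(\mathcal{P}(u)=0\)'' without this intermediate regularity theory is precisely the step the paper identifies as the technical obstacle.

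A minor misattribution: the bound \(\mathcal{D}(u)\le C\bigl(\norm{u}_{H^1}^{2(N+\alpha)/N}+\norm{u}_{H^1}^{2(N+\alpha)/(N-2)}\bigr)\), and hence the strict local minimum of \(\mathcal{I}\) at \(0\), follow from \eqref{assumptWeakf} together with Hardy--Littlewood--Sobolev and Sobolev embedding, not from \eqref{assumptStrongf}; the latter is only needed for the nonvanishing argument.
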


Similarly to assumptions \eqref{assumptWeakg}, \eqref{assumptStrongg} and \eqref{assumptNontrivialg} in the case of the local problem \eqref{problemMain}, our assumptions \eqref{assumptWeakf}, \eqref{assumptStrongf} and \eqref{assumptNontrivialf} are ``almost necessary'' for the existence of nontrivial solutions to \eqref{problemMain}. Indeed, by the Poho\v zaev identity (proposition~\ref{propositionPohozhaev}), any solution \(u \in H^1 (\R^N)\),
\begin{equation}
\label{eqPohozaev}
 \frac{N - 2}{2} \int_{\R^N} \abs{\nabla u}^2 + \frac{N}{2} \int_{\R^N} \abs{u}^2
 = \frac{N + \alpha}{2} \int_{\R^N} \bigl(I_\alpha \ast F (u)\bigr) F (u),
\end{equation}
problem \eqref{problemMain} does not have nontrivial solutions in \(H^1 (\R^N)\) if \(f (s) = s^p\) with \(p \not \in (\frac{\alpha}{N}, \bfrac{\alpha + 2}{N - 2})\)  (see also \cite{MorozVanSchaftingenGround}*{theorem 2}).
If \eqref{assumptNontrivialf} is not satisfied, then a solution \(u \in H^1 (\R^N)\) would satisfy \(- \Delta u + u = 0\) and then necessarily be trivial.

In the limit \(\alpha \to 0\), the assumptions~\eqref{assumptWeakf}, \eqref{assumptStrongf} and \eqref{assumptNontrivialf} do not allow to recover exactly \eqref{assumptWeakg}, \eqref{assumptStrongg} and \eqref{assumptNontrivialg}. The gap between \eqref{assumptWeakf} when \(\alpha \to 0\) and \eqref{assumptWeakg} is purely technical. When \(\alpha \to 0\), \eqref{assumptStrongf} gives the assumptions \(\lim_{s \to 0} {F (s)^2}/{\abs{s}^2} = 0\) and \(\lim_{\abs{s} \to \infty} F (s)^2 /\abs{s}^\bfrac{2 N}{N - 2} = 0\), which is stronger than \eqref{assumptStrongg}. The first assumption is not really surprising, as it can be observed that in \eqref{problemBL} both \(g (u)\) and \(u\) have the same spatial homogeneity and therefore by scaling it could always be assumed that \(\lim_{s \to 0} \bfrac{G (s)}{s^2} = 0\). The second assumption is equivalent to \(\limsup_{\abs{s} \to \infty} \bfrac{F (s)^2}{\abs{s}^\bfrac{2 N}{N - 2}} \le 0\). Finally \eqref{assumptNontrivialf} gives \(G (s) = F (s)^2 \ge 0\), which
is actually  weaker than \eqref{assumptNontrivialg}. This weakening of the condition can also be explained by the difference between the various scalings of the problem \eqref{problemMain}.

We also obtain some qualitative properties of groundstates of \eqref{problemMain}.

\begin{theorem}[Qualitative properties of groundstates]
\label{theoremSymmetry}
Assume that \(N \ge 3\), \(\alpha \in (0, N)\) and \(f \in C (\R; \R)\) satisfies \eqref{assumptWeakf}.
If \(f\) is odd and has constant sign on \((0, \infty)\) then every groundstate of \eqref{problemMain} has constant sign and
is radially symmetric with respect to some point in \(\R^N\).
\end{theorem}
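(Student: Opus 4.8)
The plan is to combine the evenness of the primitive \(F\) --- which makes the energy \(\mathcal I\) and the Pohozaev quantities insensitive to replacing \(u\) by \(\abs u\) --- with a polarization argument in the spirit of Van Schaftingen and Willem. Since \(f\) is odd, \(F\) is even, so \(F(u) = F(\abs u)\) pointwise; after replacing \(F\) by \(-F\) if necessary I may assume \(f \ge 0\) on \((0, \infty)\), so that \(F \ge 0\) on \(\R\) and \(F\) is nondecreasing on \([0, \infty)\). For a groundstate \(u\) one then has \(\int_{\R^N} \abs{\nabla \abs u}^2 = \int_{\R^N} \abs{\nabla u}^2\), \(\int_{\R^N} \abs{u}^2\) unchanged, and \(\int_{\R^N}(I_\alpha \ast F(\abs u)) F(\abs u) = \int_{\R^N}(I_\alpha \ast F(u)) F(u)\); hence \(\mathcal I(\abs u) = \mathcal I(u) = c\) while \(\abs u\) satisfies the same Pohozaev identity \eqref{eqPohozaev} as \(u\). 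Using the variational characterization of the groundstate level \(c\) obtained in the course of the proof of Theorem~\ref{theoremExistence}, I would conclude that \(\abs u\) is again a groundstate; being a nonnegative solution of \eqref{problemMain} with \(I_\alpha \ast F(\abs u) \ge 0\) and \(f \ge 0\) on \([0, \infty)\), it satisfies \(-\Delta \abs u + \abs u \ge 0\), hence \(\abs u > 0\) everywhere by the strong maximum principle.

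The next step is to show that the original groundstate \(u\) has constant sign. Since \(F(u) = F(\abs u)\) and, by oddness of \(f\), \(f(u) = (\operatorname{sgn} u) f(\abs u)\), the equation satisfied by \(u\) reads \(-\Delta u + u = (\operatorname{sgn} u)\bigl(I_\alpha \ast F(\abs u)\bigr) f(\abs u)\). Subtracting this from the equation for \(\abs u\), the function \(w := \abs u - u \ge 0\) satisfies
\[
 -\Delta w + w = \bigl(1 - \operatorname{sgn} u\bigr)\bigl(I_\alpha \ast F(\abs u)\bigr) f(\abs u) \ \ge\ 0 .
\]
By the strong maximum principle either \(w \equiv 0\), so that \(u = \abs u \ge 0\), or \(w > 0\) everywhere, so that \(u < 0\) everywhere; in either case \(u\) has constant sign. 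Replacing \(u\) by \(-u\) if necessary, I may henceforth assume \(u \ge 0\).

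It remains to establish radial symmetry. Fix a closed half-space \(H \subseteq \R^N\) and let \(u^H\) denote the polarization of \(u\) with respect to \(H\). Since \(u \ge 0\), polarization preserves \(\int_{\R^N}\abs{\nabla u}^2\) and \(\int_{\R^N} u^2\); since \(F\) is nondecreasing on \([0, \infty)\) one has \(F(u^H) = (F(u))^H\), so the polarization (Riesz rearrangement) inequality for the Riesz potential yields
\[
 \int_{\R^N} \bigl(I_\alpha \ast F(u^H)\bigr) F(u^H) \ \ge\ \int_{\R^N} \bigl(I_\alpha \ast F(u)\bigr) F(u) \ >\ 0,
\]
the last inequality being strict because otherwise \eqref{eqPohozaev} would force \(u \equiv 0\). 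I would then compare the dilation profiles \(t \mapsto \mathcal I\bigl(u(\cdot/t)\bigr)\) and \(t \mapsto \mathcal I\bigl(u^H(\cdot/t)\bigr)\), which are explicit functions of \(t > 0\) agreeing except in the coefficient of \(t^{N+\alpha}\), strictly larger for \(u^H\) whenever the displayed inequality is strict. Each profile has a unique maximum, attained exactly where the dilated function lies on the Pohozaev manifold, hence with energy \(\ge c\) by the characterization of \(c\); for \(u\) itself this maximum equals \(c\) and is attained at \(t = 1\) by \eqref{eqPohozaev}. Chaining these facts forces the two coefficients of \(t^{N+\alpha}\) to be equal, i.e. equality in the polarization inequality; its equality case then gives, for every closed half-space \(H\), that \(u^H = u\) or \(u^H = u \circ \sigma_H\), with \(\sigma_H\) the reflection across \(\partial H\). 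By the characterization of radially symmetric functions through polarizations, \(u\) is radially symmetric about some point of \(\R^N\), and undoing the normalizations of the first two steps completes the proof.

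I expect the rigidity at the end to be the main obstacle. First, extracting equality in the polarization inequality requires a careful analysis of the single-variable maps \(t \mapsto \mathcal I\bigl(u(\cdot/t)\bigr)\) and \(t \mapsto \mathcal I\bigl(u^H(\cdot/t)\bigr)\), to locate their unique maxima and transfer the energy comparison. Second, passing from the equality case of the Riesz rearrangement inequality for \(F(u)\) to the statement \(u^H \in \{u, u \circ \sigma_H\}\) for \(u\) itself is immediate when \(F\) is strictly increasing on \([0,\infty)\) --- that is, when ``\(f\) has constant sign on \((0,\infty)\)'' is read as ``\(f\) does not vanish there'' --- but would need an additional argument (approximation, or a direct analysis on the level sets of \(u\)) if \(f\) is allowed to vanish on \((0,\infty)\). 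The reduction to nonnegative groundstates in the first step likewise hinges on the precise variational characterization of \(c\) established for Theorem~\ref{theoremExistence}.
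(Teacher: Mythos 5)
Your plan correctly identifies the ingredients (evenness of \(F\) to control \(\abs u\), polarizations, equality cases, the characterization of radial functions by polarizations), but there are two genuine gaps, and they trace back to the same missing device. The paper's key tool is lemma~\ref{lemmaOptimalPathCritical}: if an admissible path attains the mountain pass level \(b=c\) only at one interior parameter, the function there is a critical point. The paper lifts the groundstate \(u\) to a dilation path \(\gamma\) (proposition~\ref{propositionConstructPath}), observes that \(\abs{\gamma(\cdot)}\) and \(\gamma(\cdot)^H\) remain admissible paths achieving the same level, and invokes this lemma to conclude that \(\abs u\) and \(u^H\) are again critical points. Your proposal never establishes this.

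The first gap is your claim that \(\mathcal I(\abs u)=c\) together with the Poho\v zaev identity already makes \(\abs u\) a groundstate. A groundstate must, by definition~\eqref{GroundStateC}, be a weak \emph{solution} of~\eqref{problemMain}; merely having energy level \(c\) and satisfying a Poho\v zaev-type constraint does not do that, and there is no ``variational characterization of \(c\)'' in the paper that would turn an energy equality into criticality. Consequently your nice maximum-principle argument with \(w=\abs u - u\), which presupposes that \(\abs u\) solves the equation, also has its hypotheses unverified. The second gap you have spotted yourself: to pass from equality in the polarization inequality for \(F(u)\) (which yields \(F(u)^H=F(u)\) or \(F(u)^H=F(u\circ\sigma_H)\)) to the corresponding statement for \(u\), strict monotonicity of \(F\) is needed, and the hypothesis ``\(f\) has constant sign on \((0,\infty)\)'' permits \(f\) (hence \(F'\)) to vanish on intervals. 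The paper resolves this precisely by first showing, via lemma~\ref{lemmaOptimalPathCritical}, that \(u^H\) is a critical point; then \(F(u^H)=F(u)\) and \(f(u^H)=f(u)\) imply that \(u^H\) solves the \emph{same} linear-in-\(u\) problem \(-\Delta v+v=(I_\alpha\ast F(u))f(u)\) as \(u\), and uniqueness for this problem in \(H^1(\R^N)\) gives \(u^H=u\). Your dilation-profile comparison, suitably rephrased through the mountain-pass characterization \(b=c\) rather than an unproved Poho\v zaev-manifold characterization, does give equality in the polarization inequality, but without the criticality of \(u^H\) you cannot close the argument in the degenerate case. Both gaps are repaired at once by importing a quantitative-deformation argument of the type used in lemma~\ref{lemmaOptimalPathCritical}.
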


Before explaining the proof of theorem~\ref{theoremExistence}, we recall the strategy of the proof of H.\thinspace Berestycki and P.-L.\thinspace Lions of the existence of solutions to \eqref{problemBL} \cite{BerestyckiLions1983}*{\S 3}. They consider the constrained minimization problem
\begin{equation}\label{minBL}
 \min \Bigl\{ \int_{\R^N} \abs{\nabla u}^2 \st u \in H^1 (\R^N)\; \text{ and } \int_{\R^N} G (u) - \frac{\abs{u}^2}{2} = 1 \Bigr\};
\end{equation}
they first show that by the P\'olya--Szeg\H o inequality for the Schwarz symmetrization, the minimum can be taken on radial and radially nonincreasing functions. Then they show the existence of  a minimum \(v \in H^1 (\R^N)\) by the direct method in the calculus of variations. This minimum \(v\) satisfies the equation
\[
  -\Delta v = \theta \bigl(g (v) - v\bigr)\quad \text{in \(\R^N\)},
\]
with a Lagrange multiplier \(\theta > 0\). They conclude by noting that \(u \in H^1 (\R^N)\) defined for \(x \in \R^N\) by \(u (x) = v (x/\sqrt{\theta})\) solves \eqref{problemBL}.

The approach of H.\thinspace Berestycki and P.-L.\thinspace Lions fails for \eqref{problemMain} for two different reasons. First, the nonlocal term will not be preserved or controlled under Schwarz symmetrization unless the nonlinearity \(f\) satisfies the more restrictive assumption of theorem~\ref{theoremSymmetry}. Second, the final scaling argument fails: the three terms in \eqref{problemMain} scale differently in space, so one cannot hope to get rid of a Lagrange multiplier by scaling in space.

In order to prove the existence of solutions in section~\ref{sectionExistence},
instead of the constrained minimization problem of type \eqref{minBL},
we consider the mountain pass level
\begin{equation}
\label{definitionb}
 b = \inf_{\gamma \in \Gamma} \sup_{t \in [0, 1]} \mathcal{I} \bigl(\gamma (t)\bigr),
\end{equation}
where the set of paths is defined as
\begin{equation}
\label{definitionGamma}
  \Gamma = \bigl\{ \gamma \in C \bigl([0, 1]; H^1 (\R^N)\bigr) \st \gamma (0) = 0 \text{ and } \mathcal{I} (\gamma (1)) < 0 \bigr\}.
\end{equation}
Classically, in order to show that \(b\) is a critical level of the functional \(\mathcal{I}\), one constructs a \emph{Palais--Smale sequence} at the level \(b\), that is, a sequence \((u_n)_{n \in \N}\) in \(H^1 (\R^N)\) such that \(\mathcal{I} (u_n) \to b\) and \(\mathcal{I}' (u_n) \to 0\) as \(n \to \infty\). Then one proves that the sequence \((u_n)_{n \in \N}\) converges up to translations and extraction of a subsequence \citelist{\cite{WillemMinimax}\cite{Struwe2008}}.
The first step of this approach is to establish the \emph{boundedness} of the sequence \((u_n)_{n \in \N}\) in \(H^1 (\R^N)\).
Usually this involves an Ambrosetti--Rabinowitz type superlinearity assumption,
which in our setting would require the existence of \(\mu > 1\)
such that \(s \in \R^+ \mapsto F (s)/s^\mu\) is nondecreasing.

In order to avoid introducing an Ambrosetti--Rabinowitz type condition, in section 2 we employ a technique introduced by L.\thinspace Jeanjean,
which consists in constructing a Palais--Smale sequence that \emph{satisfies asymptotically the Poho\v zaev identity} \cite{Jeanjean1997} (see also \cite{HirataIkomaTanaka}). This improvement is related to the monotonicity trick of M.\thinspace Struwe \cite{Struwe2008}*{\S II.9} and L. Jeanjean \cite{Jeanjean1999}.
This allows to prove the existence of a nontrivial solution \(u\) to \eqref{problemMain}
under the assumptions \eqref{assumptWeakf}, \eqref{assumptStrongf} and \eqref{assumptNontrivialf} only.
A novelty in our proof, apart of the presence of the nonlocal term in the equation,
is that we combine L.\thinspace Jeanjean method with a concentration-compactness argument.

To conclude that such constructed solution \(u\) is a groundstate, we first show \(\mathcal{I} (u) = b\).
This is  a straightforward computation if \(u\) satisfies the Poho\v zaev identity \eqref{eqPohozaev} proved in section~\ref{sectionPohozaev}. This however brings a regularity issue, as the proof of the identity \eqref{eqPohozaev} requires a little more regularity than \(u \in H^1 (\R^N)\). The subcriticality assumption \eqref{assumptWeakf} is too weak for a direct bootstrap argument. Thus we study the regularity of \(u\) in section~\ref{sectionBrezisKato} by a variant
of the Brezis--Kato regularity result \cite{BrezisKato1979}. The relationship between critical levels \(b\) and \(c\) is established with the construction of paths associated to critical points in section~\ref{sectionPaths} following L.\thinspace Jeanjean and H.\thinspace Tanaka \cite{JeanjeanTanaka2003}.

The qualitative properties of the groundstate of theorem~\ref{theoremSymmetry} are established in section~\ref{sectionSymmetry}. We show that the absolute value of a groundstate and its polarization are also groundstates.
This leads to contradiction with the strong maximum principle if the solution is not invariant under these transformations.

Finally in section~\ref{sectionSymmetricMountainPass} we explain how the proof of theorem~\ref{theoremExistence} can be simplified under the assumptions of theorem~\ref{theoremSymmetry} using symmetric mountain pass \cite{VanSchaftingen2005}, adapting the original argument of Berestycki and Lions for \eqref{problemMain}.

\section{Construction of a solution}
\label{sectionExistence}

\subsection{Construction of a Poho\v zaev--Palais--Smale sequence}
We first prove that there is a sequence of almost critical points at the level \(b\) defined in \eqref{definitionb} that satisfies asymptotically \eqref{eqPohozaev}.
We define the Poho\v zaev functional \(\mathcal{P} : H^1 (\R^N) \to \R\) for \(u \in H^1 (\R^N)\) by
\[
 \mathcal{P} (u) = \frac{N - 2}{2} \int_{\R^N} \abs{\nabla u}^2 + \frac{N}{2} \int_{\R^N} \abs{u}^2
 - \frac{N + \alpha}{2} \int_{\R^N} \bigl(I_\alpha \ast F (u)\bigr) F (u).
\]

\begin{proposition}[Construction of  a Poho\v zaev--Palais--Smale sequence]
\label{propositionMinimax}
If \(f \in C (\R; \R)\) satisfies \eqref{assumptWeakf} and \eqref{assumptNontrivialf}, then there exists a sequence \((u_n)_{n \in \N}\) in \(H^1 (\R^N)\) such that, as \(n \to \infty\),
\begin{align*}
 \mathcal{I} (u_n) &\to b > 0,\\
 \mathcal{I}' (u_n) &\to 0 \qquad\qquad \text{strongly in \( \bigl(H^{1} (\R^N)\bigr)'\)},\\
 \mathcal{P} (u_n) &\to 0.
\end{align*}
\end{proposition}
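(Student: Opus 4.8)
The plan is to realize the Pohožaev–Palais–Smale sequence as an honest Palais–Smale sequence for an auxiliary functional on the enlarged space $H^1(\R^N) \times \R$, following the scaling trick of L.\thinspace Jeanjean. Concretely, for $(v, s) \in H^1(\R^N) \times \R$ set $v_s(x) := v(x/e^s)$ and define
\[
 \mathcal{J}(v, s) := \mathcal{I}(v_s) = \frac{e^{(N-2)s}}{2}\int_{\R^N} \abs{\nabla v}^2 + \frac{e^{Ns}}{2}\int_{\R^N}\abs{v}^2 - \frac{e^{(N+\alpha)s}}{2}\int_{\R^N}\bigl(I_\alpha \ast F(v)\bigr)F(v),
\]
using the homogeneity $I_\alpha(x/e^s) = e^{(N-\alpha)s} I_\alpha(x)$ of the Riesz kernel. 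The point is that a critical point of $\mathcal{J}$ in the $s$-variable is exactly the Pohožaev identity for $v_s$, since $\partial_s \mathcal{J}(v,s) = \mathcal{P}(v_s)$, while $\partial_v \mathcal{J}(v,s)$ is, after the change of variables, $\mathcal{I}'(v_s)$ composed with the scaling isomorphism. First I would check that $\mathcal{J}$ is $C^1$ on $H^1(\R^N) \times \R$, using the growth hypothesis \eqref{assumptWeakf} together with the Hardy–Littlewood–Sobolev inequality to control the nonlocal term — here the exponents $\frac{N+\alpha}{N}$ and $\frac{N+\alpha}{N-2}$ are precisely the ones that make $F(u) \in L^{2N/(N+\alpha)}(\R^N) + L^{2N/(N+\alpha)}(\R^N)$ for $u \in H^1(\R^N)$, so that $\int (I_\alpha \ast F(u)) F(u)$ is finite and the associated functional is differentiable.

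Next I would verify the mountain pass geometry for $\mathcal{J}$ on $H^1(\R^N) \times \R$. The geometry for $\mathcal{I}$ near $0$ follows from \eqref{assumptWeakf} and the critical Sobolev embedding, which give $\int (I_\alpha \ast F(u))F(u) \lesssim \norm{u}_{H^1}^{2(N+\alpha)/N} + \norm{u}_{H^1}^{2(N+\alpha)/(N-2)}$ — both exponents strictly exceed $2$, so $\mathcal{I}(u) \ge \tfrac14 \norm{u}_{H^1}^2$ for $\norm{u}_{H^1}$ small; since $\mathcal{J}(v,0) = \mathcal{I}(v)$, this gives a sphere in $H^1(\R^N) \times \R$ (at $s = 0$, small $\norm{v}_{H^1}$) on which $\mathcal{J} > 0$. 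For the mountain pass endpoint I use \eqref{assumptNontrivialf}: choosing $w$ with $\int (I_\alpha \ast F(w)) F(w) > 0$ — which one arranges from $F(s_0) \ne 0$ by a suitable cut-off and scaling argument (this positivity of the convolution quadratic form when $F$ is not identically zero is where \eqref{assumptNontrivialf} really enters) — the dominant term as $s \to +\infty$ is $-\tfrac12 e^{(N+\alpha)s}\int (I_\alpha \ast F(w))F(w) \to -\infty$, so $\mathcal{J}(w, s) \to -\infty$ and in particular the path $t \mapsto (w, ts_\ast)$ for large $s_\ast$ lies in the appropriate path class. A routine check, using the homeomorphism $\gamma = (\gamma_1, \gamma_2) \mapsto t \mapsto (\gamma_1(t))_{\gamma_2(t)}$ between paths in $H^1(\R^N)\times\R$ and paths in $H^1(\R^N)$, shows that the mountain pass level $\widetilde b$ of $\mathcal{J}$ equals the mountain pass level $b$ of $\mathcal{I}$ in \eqref{definitionb}, and that $b > 0$ because of the positive sphere just constructed.

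Then I would invoke a general minimax principle — the quantitative deformation lemma applied to $\mathcal{J}$ on the complete metric space $H^1(\R^N) \times \R$ — to produce a sequence $(v_n, s_n)$ with $\mathcal{J}(v_n, s_n) \to \widetilde b = b$ and $\mathcal{J}'(v_n, s_n) \to 0$ in $(H^1(\R^N)\times\R)'$; the latter unpacks into $\partial_v \mathcal{J}(v_n, s_n) \to 0$ and $\partial_s \mathcal{J}(v_n, s_n) = \mathcal{P}((v_n)_{s_n}) \to 0$. Setting $u_n := (v_n)_{s_n}$ and using that the scaling $v \mapsto v_{s_n}$ together with its inverse are uniformly bounded operators on $H^1(\R^N)$ precisely when $s_n$ stays bounded, one gets $\mathcal{I}(u_n) \to b$, $\mathcal{I}'(u_n) \to 0$ in $(H^1(\R^N))'$, and $\mathcal{P}(u_n) \to 0$. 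The main obstacle is therefore the one subtlety that the naive argument glosses over: one must ensure the translating parameters $s_n$ remain bounded, since otherwise the pullback to $u_n$ destroys the Palais–Smale property (the scaling operators blow up). The standard fix, which I would implement, is to run Jeanjean's argument with the minimax taken over paths whose $s$-component is controlled, or equivalently to exploit that along a well-chosen minimizing sequence of paths one stays in a region where $s$ is a priori bounded in terms of $b$ and the energy; this boundedness of $s_n$ is extracted from $\mathcal{J}(v_n, s_n) \to b$ combined with $\partial_s \mathcal{J}(v_n,s_n) \to 0$ and the coercivity of $\mathcal{J}(v, \cdot)$ in $s$ on the relevant region. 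I would present this as the technical heart of the proof.
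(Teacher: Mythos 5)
Your proposal is correct and follows essentially the same route as the paper: both define the augmented functional $\mathcal{J}(v,s) = \mathcal{I}(v_s)$ on $\R\times H^1(\R^N)$ (the paper writes $\mathcal{I}\circ\Phi$ with $\Phi(\sigma,v)(x)=v(e^{-\sigma}x)$), check the mountain pass geometry exactly as you do, apply a general minimax principle, and pull back via the scaling to obtain $u_n$. Your claims~1 and~2 correspond precisely to the paper's claims~\ref{claimFiniteb} and~\ref{claimPositiveb}, and the identity $\partial_s\mathcal{J}=\mathcal{P}(v_s)$, $\partial_v\mathcal{J}=\mathcal{I}'(v_s)\circ(\text{scaling})$ is the one the paper uses verbatim.

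One point is worth flagging. You are right that the translation $\mathcal{J}'(v_n,s_n)\to 0$ into $\mathcal{I}'(u_n)\to 0$ requires control on $s_n$, since the conjugating operators $w\mapsto w(e^{s}\,\cdot)$ are not uniformly bounded on $H^1(\R^N)$; the paper is terse about this, but it is indeed what the reference to Willem's quantitative minimax principle \cite{WillemMinimax}*{theorem~2.9} and to \citelist{\cite{Jeanjean1997}\cite{HirataIkomaTanaka}} is supplying: one applies the minimax principle with almost-optimal paths of the form $\tau\mapsto(0,\gamma(\tau))$ and uses the ``$\delta$-close to $\gamma([0,1])$'' conclusion to force $\sigma_n\to 0$. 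Your first suggested fix --- restricting to paths whose $s$-component is controlled --- is exactly this. Your alternative reasoning via ``coercivity of $\mathcal{J}(v,\cdot)$ in $s$'' is not correct as stated: $\mathcal{J}(v,s)\to 0$ as $s\to-\infty$ and $\mathcal{J}(v,s)\to-\infty$ as $s\to+\infty$ (when the nonlocal term is positive), so $\mathcal{J}$ is not coercive in $s$, and the boundedness of $s_n$ cannot be extracted from $\mathcal{J}(v_n,s_n)\to b$ and $\partial_s\mathcal{J}(v_n,s_n)\to 0$ alone; it must come from the localization built into the minimax principle.
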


\begin{proof}
Our strategy consists in first proving in claims~\ref{claimFiniteb} and \ref{claimPositiveb} that the functional \(\mathcal{I}\) has the mountain pass geometry before concluding by a minimax principle.

\begin{claim}
\label{claimFiniteb}
The critical level satisfies
\[
 b < \infty.
\]
\end{claim}

\begin{proofclaim}
We need to show that the set of paths \(\Gamma\) is nonempty.
In view of the definition of \(\Gamma\), it is sufficient to construct \(u \in H^1 (\R^N)\) such that
\(\mathcal{I} (u) < 0\).
If we choose \(s_0\) of assumption \eqref{assumptNontrivialf} so that \(F (s_0) \ne 0\)
and set \(w = s_0 \chi_{B_1}\), we obtain
\[
 \int_{\R^N} \bigl(I_\alpha \ast F (w) \bigr) F (w)
 = F (s_0)^2 \int_{B_1} \int_{B_1} I_\alpha (x - y) > 0.
\]
By \eqref{assumptWeakf} the left hand side is continuous in \(L^2 (\R^N) \cap L^{\frac{2 N}{N  - 2}} (\R^N)\).
Since \(H^1 (\R^N)\) is dense in \(L^2 (\R^N) \cap L^{\frac{2 N}{N  - 2}} (\R^N)\),
there exists \(v \in H^1 (\R^N)\) such that
\[
 \int_{\R^N} \bigl(I_\alpha \ast F (v) \bigr) F (v) > 0.
\]
We will take the function \(u\) in the family of functions \(u_\tau \in H^1 (\R^N)\) defined for \(\tau > 0\) and \(x \in \R^N\) by
\(
  u_\tau (x) = v \bigl(\tfrac{x}{\tau}\bigr).
\)
On this family, we compute for every \(\tau > 0\),
\[
  \mathcal{I} (u_\tau) =
  \frac{\tau^{N - 2}}{2} \int_{\R^N} \abs{\nabla v}^2 + \frac{\tau^N}{2} \int_{\R^N} \abs{v}^2 - \frac{\tau^{N + \alpha}}{2} \int_{\R^N} \bigl(I_\alpha \ast F (v)\bigr) F (v),
\]
and observe that for \(\tau > 0\) large enough, \(\mathcal{I} (u_\tau) < 0\).
\end{proofclaim}

\begin{claim}
\label{claimPositiveb}
The critical level satisfies
\[
 b > 0.
\]
\end{claim}

\begin{proofclaim}
Recall the Hardy--Littlewood--Sobolev inequality \cite{LiebLoss2001}*{theorem 4.3}: if \(s \in (1, \frac{N}{\alpha})\)
then for every \(v \in L^s (\R^N)\), \(I_\alpha \ast v\in L^\frac{N s}{N - \alpha s} (\R^N)\) and
\begin{equation}
\label{eqHLS}
 \int_{\R^N} \abs{I_\alpha \ast v}^\frac{N s}{N - \alpha s} \le C \Bigl(\int_{\R^N} \abs{v}^s \Bigr)^\frac{N}{N - \alpha s},
\end{equation}
where \(C>0\) depends only on \(\alpha\), \(N\) and \(s\).
By the upper bound \eqref{assumptWeakf} on \(F\), for every \(u \in H^1 (\R^N)\),
\[
\begin{split}
 \int_{\R^N} \bigl(I_\alpha \ast F (u)\bigr) F (u)
 &\le C \Bigl( \int_{\R^N} \abs{F (u)}^{\frac{2 N}{N + \alpha}} \Bigr)^{1 + \frac{\alpha}{N}}\\
 &\le C'\Bigl( \int_{\R^N} \abs{u}^2 + \abs{u}^\frac{2 N}{N - 2} \Bigr)^{1 + \frac{\alpha}{N}}\\
 & \le C''\biggl(\Bigl( \int_{\R^N} \abs{u}^2 \Bigr)^{1 + \frac{\alpha}{N}} +
 \Bigl(\int_{\R^N} \abs{\nabla u}^2 \Bigr)^{1 + \frac{\alpha + 2}{N - 2}}\biggr).
\end{split}
\]
Hence there exists \(\delta > 0\) such that if \(\int_{\R^N} \abs{\nabla u}^2 + \abs{u}^2 \le \delta\), then
\[
 \int_{\R^N} \bigl(I_\alpha \ast F (u)\bigr) F (u)
 \le \frac{1}{4} \int_{\R^N} \abs{\nabla u}^2 + \abs{u}^2
\]
and therefore
\[
  \mathcal{I} (u) \ge \frac{1}{4} \int_{\R^N} \abs{\nabla u}^2 + \abs{u}^2.
\]
In particular, if \(\gamma \in \Gamma\), then \(\int_{\R^N} \abs{\nabla \gamma (0)}^2 + \abs{\gamma (0)}^2 = 0 <  \delta < \int_{\R^N} \abs{\nabla \gamma (1)}^2 + \abs{\gamma (1)}^2\) and by the intermediate value theorem there exists \(\Bar{\tau} \in (0, 1)\) such that \(\int_{\R^N} \abs{\nabla \gamma (\Bar{\tau})}^2 + \abs{\gamma (\Bar{\tau})}^2 = \delta\).
At this point \(\Bar{\tau}\),
\[
 \frac{\delta}{4} \le \mathcal{I} \bigl(\gamma (\Bar{\tau})\bigr) \le \sup_{\tau \in [0, 1]}\mathcal{I} (\gamma (\tau)).
\]
Since \(\gamma \in \Gamma\) is arbitrary, this implies that \(b \ge \frac{\delta}{4} > 0\).
\end{proofclaim}

\noindent\textbf{Conclusion.} Following L.\thinspace Jeanjean \cite{Jeanjean1997}*{\S 2} (see also \cite{HirataIkomaTanaka}*{\S 4}), we define the map \(\Phi : \R \times H^1 (\R^N) \to H^1 (\R^N)\) for \(\sigma \in \R\), \(v \in H^1 (\R^N)\) and \(x \in \R^N\) by
\[
 \Phi (\sigma, v) (x) = v (e^{-\sigma} x).
\]
For every \(\sigma \in \R\) and \(v \in H^1 (\R^N)\), the functional \(\mathcal{I} \circ \Phi\) is computed as
\[
  \mathcal{I} \bigl(\Phi (\sigma, v) \bigr)
  = \frac{e^{(N - 2) \sigma}}{2}
  \int_{\R^N} \abs{\nabla v}^2 + \frac{e^{N \sigma}}{2}\int_{\R^N} \abs{v}^2 - \frac{e^{(N + \alpha) \sigma}}{2} \int_{\R^N} \bigl(I_\alpha \ast F (v)\bigr) F (v).
\]
In view of \eqref{assumptWeakf}, \(\mathcal{I} \circ \Phi\) is continuously Fr\'echet--differentiable on \(\R \times H^1 (\R^N)\).
We define the family of paths
\[
 \Tilde{\Gamma} = \Bigl\{ \Tilde{\gamma} \in C \bigl([0, 1]; \R \times H^1 (\R^N)\bigr) \st \Tilde{\gamma}(0) = (0, 0) \text{ and } (\mathcal{I} \circ \Phi) \bigl(\Tilde{\gamma} (1)\bigr) < 0 \Bigr\}
\]
As \(\Gamma = \{ \Phi \circ \Tilde{\gamma} \st \Tilde{\gamma} \in \Tilde{\Gamma} \}\), the mountain pass levels of \(\mathcal{I}\) and \(\mathcal{I} \circ \Phi\) coincide:
\[
 b = \inf_{\Tilde{\gamma} \in \Tilde{\Gamma}} \sup_{\tau \in [0, 1]} (\mathcal{I} \circ \Phi) \bigl(\Tilde{\gamma} (\tau)\bigr).
\]

By the minimax principle \cite{WillemMinimax}*{theorem 2.9}, there exists a sequence \( \bigl((\sigma_n, v_n)\bigr)_{n \in \N}\) in \(\R \times H^1 (\R^N)\) such that as \(n \to \infty\)
\begin{align*}
  (\mathcal{I} \circ \Phi) (\sigma_n, v_n) &\to b,\\
  (\mathcal{I} \circ \Phi)' (\sigma_n, v_n) &\to 0 \qquad \qquad \text{in \(\bigl( \R \times H^{1} (\R^N)\bigr)^*\)}.
\end{align*}
Since for every \((h, w) \in \R \times H^1 (\R^N)\),
\[
 (\mathcal{I} \circ \Phi)' (\sigma_n, v_n)[h, w] = \mathcal{I}'\bigr(\Phi (\sigma_n, v_n)\bigr)[\Phi (\sigma_n, w)]
 + \mathcal{P} \bigl(\Phi (\sigma_n, v_n)\bigr) h,
\]
we reach the conclusion by taking \(u_n = \Phi (\sigma_n, v_n)\).
\end{proof}

\subsection{Convergence of Poho\v zaev--Palais--Smale sequences}
We will now show how a solution of problem \eqref{problemMain} can be constructed from the sequence given by proposition~\ref{propositionMinimax}.

\begin{proposition}[Convergence of Poho\v zaev--Palais--Smale sequences]
\label{propositionPPS}
Let \(f \in C (\R; \R)\) and  \((u_n)_{n \in \N}\) be a sequence in \(H^1 (\R^N)\).
If \(f\) satisfies \eqref{assumptWeakf} and \eqref{assumptStrongf}, \(\bigl(\mathcal{I} (u_n)\bigr)_{n \in \N}\) is bounded  and, as \(n \to \infty\),
\begin{align*}
 \mathcal{I}' (u_n) &\to 0 \qquad\qquad \text{strongly in \( (H^{1} (\R^N))'\)}, \\
 \mathcal{P} (u_n) & \to 0,
\end{align*}
then
\begin{itemize}[--]
 \item either up to a subsequence \(u_n \to 0\) strongly in \(H^1 (\R^N)\),
 \item or there exists \(u \in H^1 (\R^N) \setminus \{0\}\) such that \(\mathcal{I}' (u) = 0\) and a sequence \((a_n)_{n \in \N}\) of points in \(\R^N\)  such that up to a subsequence \(u_n (\cdot - a_n) \weakto u\) weakly in \(H^1 (\R^N)\) as \(n \to \infty\).
\end{itemize}
\end{proposition}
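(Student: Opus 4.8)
The plan is to run a concentration--compactness dichotomy on the sequence \((u_n)_{n\in\N}\), using the Poho\v zaev hypothesis only to get boundedness, then pass to the limit in the equation.

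\emph{Step 1: boundedness.} Since \(\alpha>0\), the linear combination
\[
 (N+\alpha)\,\mathcal{I}(u_n)-\mathcal{P}(u_n)
 =\frac{\alpha+2}{2}\int_{\R^N}\abs{\nabla u_n}^2+\frac{\alpha}{2}\int_{\R^N}\abs{u_n}^2
\]
has both coefficients strictly positive, while its left-hand side is bounded (because \((\mathcal{I}(u_n))_n\) is bounded and \(\mathcal{P}(u_n)\to 0\)); hence \((u_n)_n\) is bounded in \(H^1(\R^N)\). In particular \(\mathcal{I}'(u_n)[u_n]\to 0\). Next set \(\lambda_n:=\sup_{a\in\R^N}\int_{B_1(a)}\abs{u_n}^2\); after extracting a subsequence either \(\lambda_n\to 0\) (vanishing) or \(\inf_n\lambda_n>0\).

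\emph{Step 2: the vanishing case.} If \(\lambda_n\to 0\), then by the vanishing lemma of P.-L.~Lions \(u_n\to 0\) in \(L^r(\R^N)\) for every \(r\in(2,\tfrac{2N}{N-2})\). Combining \eqref{assumptWeakf} and \eqref{assumptStrongf}, for each \(\varepsilon>0\) there is \(C_\varepsilon\) with \(\abs{F(s)}^{2N/(N+\alpha)}\le\varepsilon(\abs{s}^2+\abs{s}^{2N/(N-2)})+C_\varepsilon\abs{s}^r\) for a fixed \(r\in(2,\tfrac{2N}{N-2})\); integrating and using the \(H^1\)-bound together with the \(L^r\)-convergence gives \(\int_{\R^N}\abs{F(u_n)}^{2N/(N+\alpha)}\to 0\). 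By the Hardy--Littlewood--Sobolev inequality \eqref{eqHLS} this forces \(\int_{\R^N}(I_\alpha\ast F(u_n))F(u_n)\to 0\), and also \(\int_{\R^N}(I_\alpha\ast F(u_n))f(u_n)u_n\to 0\) (factor the HLS estimate and use that \(\abs{f(s)s}^{2N/(N+\alpha)}\le C(\abs{s}^2+\abs{s}^{2N/(N-2)})\) by \eqref{assumptWeakf}, so \(f(u_n)u_n\) stays bounded in \(L^{2N/(N+\alpha)}(\R^N)\)). Then \(o(1)=\mathcal{I}'(u_n)[u_n]=\int_{\R^N}\abs{\nabla u_n}^2+\abs{u_n}^2+o(1)\), so \(u_n\to 0\) strongly in \(H^1(\R^N)\): the first alternative.

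\emph{Step 3: the non-vanishing case.} Choose \(a_n\in\R^N\) with \(\int_{B_1(a_n)}\abs{u_n}^2\ge\tfrac12\inf_k\lambda_k>0\) and put \(\tilde u_n=u_n(\cdot-a_n)\). Since \(\mathcal{I}'\) and the \(H^1\)-norm are translation invariant, \((\tilde u_n)_n\) is bounded in \(H^1(\R^N)\) with \(\mathcal{I}'(\tilde u_n)\to 0\) in \(\bigl(H^1(\R^N)\bigr)'\); up to a subsequence \(\tilde u_n\weakto u\) in \(H^1(\R^N)\), \(\tilde u_n\to u\) in \(L^2_{\mathrm{loc}}(\R^N)\) and a.e., and the bound on \(B_1\) yields \(u\ne 0\). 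To see \(\mathcal{I}'(u)=0\), fix \(\varphi\in C_c^\infty(\R^N)\): the local terms pass to the limit by weak convergence, and for the nonlocal term \(\abs{F(s)}^{2N/(N+\alpha)}\le C(\abs{s}^2+\abs{s}^{2N/(N-2)})\) with a.e.\ convergence give \(F(\tilde u_n)\weakto F(u)\) in \(L^{2N/(N+\alpha)}(\R^N)\), hence by \eqref{eqHLS} \(I_\alpha\ast F(\tilde u_n)\weakto I_\alpha\ast F(u)\) in \(L^{2N/(N-\alpha)}(\R^N)\); meanwhile on \(\mathrm{supp}\,\varphi\), local compactness and \eqref{assumptWeakf} make \(f(\tilde u_n)\varphi\to f(u)\varphi\) strongly in \(L^{2N/(N+\alpha)}(\R^N)\). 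Pairing the weakly convergent factor with the strongly convergent one gives \(\mathcal{I}'(\tilde u_n)[\varphi]\to\mathcal{I}'(u)[\varphi]\), whence \(\mathcal{I}'(u)[\varphi]=0\) for all \(\varphi\) and \(\mathcal{I}'(u)=0\): the second alternative.

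\emph{Main obstacle.} The only genuinely delicate point is the limit passage inside the nonlocal term in Step~3: it combines weak continuity of \(v\mapsto I_\alpha\ast F(v)\) (from \eqref{eqHLS} and a.e.\ convergence) with strong local convergence of \(f(\tilde u_n)\), and one must check that the exponents \(\tfrac{2N}{N-\alpha}\) and \(\tfrac{2N}{N+\alpha}\) are conjugate and that \eqref{assumptWeakf} keeps \(f(\tilde u_n)\varphi\) dominated in \(L^{2N/(N+\alpha)}\) on \(\mathrm{supp}\,\varphi\). The boundedness in Step~1 is the conceptual heart but is immediate once the Poho\v zaev combination is written down.
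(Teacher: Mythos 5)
Your proof is correct and follows essentially the same strategy as the paper's: bound the sequence via the Poho\v{z}aev combination, run a Lions vanishing dichotomy, and pass to the limit in the equation by pairing the weakly convergent Riesz term with the locally strongly convergent \(f(\tilde u_n)\). The only real variant is in the vanishing branch: the paper argues by contradiction, using \(\mathcal{P}(u_n)\to 0\) together with the assumed failure of the first alternative to extract a positive lower bound on \(\int (I_\alpha\ast F(u_n))F(u_n)\) and then contradicting vanishing, whereas you show directly that vanishing forces \(\int (I_\alpha\ast F(u_n))f(u_n)u_n\to 0\) and then conclude \(u_n\to 0\) in \(H^1\) from \(\mathcal{I}'(u_n)[u_n]\to 0\); both routes use the same decomposition of \(F\) via \eqref{assumptStrongf} and the same HLS/Lions ingredients, so the difference is cosmetic.
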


\begin{proof} \setcounter{claim}{0}
Assume that the first part of the alternative does not hold, that is,
\begin{equation}\label{eqNotun0}
  \liminf_{n \to \infty} \int_{\R^n} \abs{\nabla u_n}^2 + \abs{u_n}^2 > 0.
\end{equation}
We first establish in claim~\ref{claimBounded} the boundedness of the sequence and then the nonvanishing of the sequence in claim~\ref{claimNonVanishing}.

\begin{claim}
\label{claimBounded}
The sequence \((u_n)_{n \in \N}\) is bounded in \(H^1 (\R^N)\).
\end{claim}
\begin{proofclaim}[Proof of claim~\ref{claimBounded}]
For every \(n \in \N\),
\begin{equation*}
 \frac{\alpha + 2}{2(N + \alpha)}
 \int_{\R^N} \abs{\nabla u_n}^2 + \frac{\alpha}{2 (N + \alpha)} \int_{\R^N} \abs{u_n}^2
=
 \mathcal{I} (u_n) - \frac{1}{N + \alpha} \mathcal{P} (u_n).
\end{equation*}
As the right-hand side is bounded by our assumptions, the sequence \((u_n)_{n \in \N}\) is bounded in \(H^1 (\R^N)\).
\end{proofclaim}

\begin{claim}
\label{claimNonVanishing}
For every \(p \in (2, \frac{2 N}{N - 2})\),
\[
 \liminf_{n \to \infty} \sup_{a \in \R^N} \int_{B_1 (a)} \abs{u_n}^p > 0.
\]
\end{claim}

\begin{proofclaim}
First, by \eqref{eqNotun0} and the definition of the Poho\v zaev functional \(\mathcal{P}\) we have
\begin{equation}
\label{eqLiminfNonlinear}
\begin{split}
\liminf_{n \to \infty}  \int_{\R^N} \bigl(I_\alpha \ast F (u_n)\bigr) F (u_n) &= \liminf_{n \to \infty} \frac{N - 2}{N + \alpha} \int_{\R^N} \abs{\nabla u}^2 + \frac{N}{N + \alpha} \int_{\R^N} \abs{u}^2
 - \frac{2}{N + \alpha} \mathcal{P} (u_n)\\
 &> 0.
\end{split}
\end{equation}
For every \(n \in \N\), the function \(u_n\) satisfies the inequality
\citelist{\cite{Lions1984CC2}*{lemma I.1}\cite{WillemMinimax}*{lemma 1.21}\cite{MorozVanSchaftingenGround}*{lemma 2.3}}
\[
 \int_{\R^N} \abs{u_n}^p \le C \Bigl(\int_{\R^N} \abs{\nabla u_n}^2 + \abs{u_n}^2\Bigr)
 \Bigl(\sup_{a \in \R^N} \int_{B_1 (a)} \abs{u_n}^p\Bigr)^{1 - \frac{2}{p}}.
\]
As \(F\) is continuous and satisfies \eqref{assumptStrongf}, for every \(\epsilon > 0\), there exists \(C_\epsilon\) such that for every \(s \in \R\),
\[
 \abs{F (s)}^\frac{2 N}{N + \alpha} \le \epsilon \bigl(\abs{s}^2 + \abs{s}^\frac{2 N}{N - 2}\bigr) + C_\epsilon \abs{s}^p.
\]
Since \((u_n)_{n \in \N}\) is bounded in \(H^1 (\R^N)\) and hence, by the Sobolev embedding,
in \(L^{\frac{2 N}{N - 2}} (\R^N)\), we have
\[
 \liminf_{n \to \infty} \int_{\R^N} \abs{F (u_n)}^\frac{2 N}{N + \alpha}
 \le C'' \epsilon
 + C_\epsilon'
 \Bigl(\liminf_{n\to\infty} \sup_{a \in \R^N} \int_{B_1 (a)} \abs{u_n}^p\Bigr)^{1 - \frac{2}{p}}.
\]
Since \(\epsilon > 0\) is arbitrary, if \(\liminf_{n \to \infty} \sup_{a \in \R^N} \int_{B_1 (a)} \abs{u_n}^p = 0\),
then
\[
 \liminf_{n \to \infty} \int_{\R^N} \abs{F (u_n)}^\frac{2 N}{N + \alpha}
  = 0,
\]
and the Hardy--Littlewood--Sobolev inequality implies that
\[
 \liminf_{n \to \infty} \int_{\R^N} \bigl(I_\alpha \ast F (u_n)\bigr) F (u_n) = 0,
\]
in contradiction with \eqref{eqLiminfNonlinear}.
\end{proofclaim}

\noindent\textbf{Conclusion.}
Up to a translation, we can now assume that for some \(p \in (2, \frac{2 N}{N - 2})\), \[\liminf_{n \to \infty} \int_{B_1} \abs{u_n}^p > 0.\] By Rellich's theorem, this implies that up to a subsequence, \((u_n)_{n \in \N}\) converges weakly in \(H^1 (\R^N)\) to \(u \in H^1 (\R^N) \setminus \{0\}\).

As the sequence \((u_n)_{n \in \N}\) is bounded in \(H^1 (\R^N)\), by the Sobolev embedding, it is also bounded  in \(L^2 (\R^N) \cap L^\frac{2 N}{N - 2} (\R^N)\). By \eqref{assumptWeakf}, the sequence \((F \circ u_n)_{n \in \N}\) is therefore bounded in \(L^{\frac{2 N}{N + \alpha}} (\R^N)\).
Since the sequence \((u_n)_{n \in \N}\) converges weakly to \(u\) in \(H^1 (\R^N)\),
it converges up to a subsequence  to \(u\) almost everywhere in \(\R^{N}\).
By continuity of \(F\), \((F \circ u_n)_{n \in \N}\) converges almost everywhere to \(F \circ u\) in \(\R^N\). This implies that the sequence \((F \circ u_n)_{n \in \N}\) converges weakly to \(F \circ u\) in \(L^\frac{2 N}{N + \alpha} (\R^N)\). As the Riesz potential defines a linear continuous map from \(L^\frac{2 N}{N + \alpha} (\R^N)\) to \(L^\frac{2 N}{N - \alpha} (\R^N)\), the sequence \((I_\alpha \ast (F \circ u_n))_{n \in \N}\) converges weakly to \(I_\alpha \ast (F \circ u)\) in \(L^\frac{2 N}{N - \alpha} (\R^N)\).

On the other hand, in view of \eqref{assumptWeakf} and by Rellich's theorem, the sequence \((f \circ u_n)_{n \in \N}\) converges strongly to \(f \circ u\) in \(L^p_{\mathrm{loc}} (\R^N)\) for every \(p \in [1, \frac{2 N}{\alpha + 2})\). We conclude that
\[
  \big(I_\alpha \ast (F \circ u_n)\bigr) (f \circ u_n)  \weakto \bigl(I_\alpha \ast (F \circ u)\bigr) (f \circ u) \qquad \text{weakly in \(L^p (\R^N)\)},
\]
for every \(p \in [1, \frac{2 N}{N + 2})\). This implies in particular that for every \(\varphi \in C^1_c (\R^N)\),
\begin{multline*}
 \int_{\R^N} \nabla u \cdot \nabla \varphi + u \varphi - \int_{\R^N} \bigl(I_\alpha \ast (F \circ u)\bigr) \bigl((f \circ u) \varphi\bigr)\\
 = \lim_{n \to \infty} \int_{\R^N} \nabla u \cdot \nabla \varphi + u \varphi - \int_{\R^N} \bigl(I_\alpha \ast (F \circ u_n)\bigr) (f \circ u_n) \varphi
 = 0;
\end{multline*}
that is, \(u\) is a weak solution of \eqref{problemMain}.
\end{proof}

We point out that the assumption~\eqref{assumptStrongf} is only used in the proof of claim~\ref{claimNonVanishing}.

\section{Regularity of solutions and Poho\v zaev identity.}

We prove in this section that any solution of \eqref{problemMain} has some additional regularity; this regularity will be sufficient to establish the Poho\v zaev identity \eqref{eqPohozaev}.

\begin{proposition}[Improved local regularity of solutions of \eqref{problemMain}]
\label{propositionRegularity}
If \(f \in C (\R; \R)\) satisfies \eqref{assumptWeakf} and \(u\in H^1(\R^N)\) solves \eqref{problemMain},
then for every \(p \ge 1\), \(u \in W^{2, p}_\mathrm{loc} (\R^N)\).
\end{proposition}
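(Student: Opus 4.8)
The plan is to run a bootstrap argument on the equation $-\Delta u + u = (I_\alpha \ast F(u))f(u)$, treating the right-hand side as a datum $V$ and using interior elliptic estimates for $-\Delta + 1$. The only subtlety is that the subcriticality assumption \eqref{assumptWeakf} is just weak enough that a naive bootstrap stalls, so the argument must proceed through the Riesz potential carefully, gaining integrability on $I_\alpha \ast F(u)$ first and then on the product $(I_\alpha \ast F(u))f(u)$.

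First I would record the starting integrability. Since $u \in H^1(\R^N) \hookrightarrow L^{2N/(N-2)}(\R^N)$, assumption \eqref{assumptWeakf} gives $F(u) \in L^{2N/(N+\alpha)}(\R^N)$ and $f(u)$ is controlled by $|u|^{\alpha/(N-2)}$ near infinity and by $|u|^{\alpha/N}$ near zero — more precisely, for every $q$ with $|s f(s)| \le C(|s|^{(N+\alpha)/N}+|s|^{(N+\alpha)/(N-2)})$, one gets $f(u) \in L^{r}_{\mathrm{loc}}$ for a suitable range of $r$ once one knows $u \in L^{s}_{\mathrm{loc}}$. By the Hardy--Littlewood--Sobolev inequality \eqref{eqHLS}, $I_\alpha \ast F(u) \in L^{2N/(N-\alpha)}(\R^N)$. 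Then the product $(I_\alpha \ast F(u)) f(u)$ lies in $L^t_{\mathrm{loc}}(\R^N)$ for some $t > 1$ (by Hölder, combining the fixed global exponent of $I_\alpha \ast F(u)$ with the local exponent of $f(u)$ coming from $u \in L^{2N/(N-2)}$). By Calderón--Zygmund estimates for $-\Delta + 1$, this gives $u \in W^{2,t}_{\mathrm{loc}}(\R^N)$, hence, via Sobolev embedding, $u \in L^{s_1}_{\mathrm{loc}}(\R^N)$ with $s_1 > 2N/(N-2)$.

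Next I would iterate: improved local integrability of $u$ feeds into \eqref{assumptWeakf} to give improved local integrability of $f(u)$, while $I_\alpha \ast F(u)$ retains at least its fixed $L^{2N/(N-\alpha)}$ integrability (and can only improve locally); Hölder then gives $(I_\alpha \ast F(u))f(u) \in L^{t_1}_{\mathrm{loc}}$ with $t_1 > t$, and elliptic regularity upgrades $u$ accordingly. One must check that the gain at each step is bounded below, so that after finitely many iterations $u \in L^q_{\mathrm{loc}}(\R^N)$ for all $q < \infty$; the exponent arithmetic here is where assumption \eqref{assumptWeakf} being precisely the HLS-critical growth is used, and it should be verified that the subcriticality is not actually needed — only \eqref{assumptWeakf}, matching the statement. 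Once $u \in L^q_{\mathrm{loc}}$ for all $q$, then $f(u) \in L^q_{\mathrm{loc}}$ for all $q$, and since $I_\alpha \ast F(u) \in L^{2N/(N-\alpha)}(\R^N)$ is a fixed globally integrable-to-some-power function that is moreover locally in every $L^q$ once $F(u)$ is (by HLS again, or by splitting $I_\alpha$ into a local singular part and a bounded tail), the product $(I_\alpha \ast F(u))f(u) \in L^p_{\mathrm{loc}}(\R^N)$ for every $p \ge 1$, whence $u \in W^{2,p}_{\mathrm{loc}}(\R^N)$ for every $p \ge 1$ by Calderón--Zygmund.

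The main obstacle is the first bootstrap step: showing that the product $(I_\alpha \ast F(u)) f(u)$ is in some $L^t_{\mathrm{loc}}$ with $t$ strictly above the threshold that $u \in H^1$ alone provides, so that the iteration gets started and makes genuine progress. This requires carefully pairing the critical growth exponents $(N+\alpha)/N$ and $(N+\alpha)/(N-2)$ in \eqref{assumptWeakf} against the HLS exponents, and observing that because $f(u)$ only needs to be integrated locally, one has a little room to spare. After that, the iteration is routine, and I would present it as: there is $\theta > 1$ such that $u \in L^q_{\mathrm{loc}}$ implies $u \in L^{\theta q}_{\mathrm{loc}}$ for all $q$ in the relevant range, giving the claim in finitely many steps.
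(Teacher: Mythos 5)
Your plan is a direct bootstrap, but this is precisely the approach the paper explicitly rules out, and the exponent arithmetic confirms the obstruction. Starting from $u\in H^1(\R^N)\hookrightarrow L^{2N/(N-2)}(\R^N)$, assumption \eqref{assumptWeakf} gives $F(u)\in L^{2N/(N+\alpha)}(\R^N)$, hence $I_\alpha\ast F(u)\in L^{2N/(N-\alpha)}(\R^N)$, while $\abs{f(u)}\le C(\abs{u}^{\alpha/N}+\abs{u}^{(\alpha+2)/(N-2)})$ gives $f(u)\in L^{2N/(\alpha+2)}_{\mathrm{loc}}(\R^N)$. H\"older yields $(I_\alpha\ast F(u))f(u)\in L^t_{\mathrm{loc}}$ with $\frac{1}{t}=\frac{N-\alpha}{2N}+\frac{\alpha+2}{2N}=\frac{N+2}{2N}$, i.e.\ $t=\frac{2N}{N+2}$, and Calder\'on--Zygmund plus Sobolev then returns $u\in L^{2N/(N-2)}_{\mathrm{loc}}$ --- exactly what you started with. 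Your suggestion to split $I_\alpha$ into a near singular part and a bounded tail does not fix this: the tail convolution is indeed bounded, but the near part $(I_\alpha\chi_{B_1})\ast F(u)$ only sees $F(u)$ locally, and locally the high-power term $\abs{u}^{(N+\alpha)/(N-2)}$ still pins $F(u)$ to $L^{2N/(N+\alpha)}_{\mathrm{loc}}$, so the near part remains in $L^{2N/(N-\alpha)}_{\mathrm{loc}}$ and no strict gain occurs. You identify the ``main obstacle'' correctly but the argument you sketch does not actually resolve it, and the hope that subcriticality is not needed is where the proposal breaks down: under \eqref{assumptWeakf} alone the iteration sits precisely at the critical index at every step.

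The paper circumvents this by a nonlocal Brezis--Kato argument (Proposition~\ref{propositionBrezisKato}), which is a different mechanism from bootstrap. It writes the right-hand side as $\bigl(I_\alpha\ast(Hu)\bigr)K$ with $H=F(u)/u$ and $K=f(u)$, notes that both lie in $L^{2N/\alpha}(\R^N)+L^{2N/(\alpha+2)}(\R^N)$, and then, by testing the truncated equation against $\abs{u}^{p-2}u$ and invoking Lemma~\ref{lemmathetaElliptic} (the nonlocal analogue of the Brezis--Kato inequality), obtains \emph{global} integrability $u\in L^p(\R^N)$ for every $p\in[2,\frac{N}{\alpha}\frac{2N}{N-2})$ in one shot, independently of any iteration on the exponent. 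This global gain is then enough to conclude $I_\alpha\ast F(u)\in L^\infty(\R^N)$, after which the right-hand side is pointwise dominated by $C(\abs{u}^{\alpha/N}+\abs{u}^{(\alpha+2)/(N-2)})$, a genuinely subcritical local nonlinearity, and only then does a standard local bootstrap give $u\in W^{2,p}_{\mathrm{loc}}(\R^N)$ for all $p$. If you want a correct proof along your lines, you need a substitute for your first bootstrap step; the Moser/Brezis--Kato truncation test is precisely that substitute, and it is not an optional refinement here but the essential idea.
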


In particular, proposition~\ref{propositionRegularity} with the Morrey--Sobolev embeddings imply that a solution \(u\) is locally H\"older continuous.
If \(f\) has additional regularity then regularity of \(u\) could be further improved via Schauder estimates.

The assumption \eqref{assumptWeakf} is too weak for the standard bootstrap method as in \citelist{\cite{CingolaniClappSecchi2011}*{lemma A.1}\cite{MorozVanSchaftingenGround}*{proposition 4.1}}.
Instead, in order to prove regularity of solutions of \eqref{problemMain} we shall rely on a nonlocal version of the Brezis--Kato estimate.

\subsection{A nonlocal Brezis--Kato type estimate}
\label{sectionBrezisKato}
A special case of the regularity result of Brezis and Kato \cite{BrezisKato1979}*{theorem 2.3} states that if \(u \in H^1 (\R^N)\) is a solution
of the linear elliptic equation
\begin{equation}
\label{problemBrezisKato}
  - \Delta u + u = V u\quad \text{in \(\R^N\)},
\end{equation}
and \(V \in L^\infty (\R^N) + L^\frac{N}{2} (\R^N)\), then \(u \in L^p (\R^N)\) for every \(p \ge 1\).
We extend this result to a class of nonlocal linear equations.

\begin{proposition}[Improved integrability of solution of a nonlocal critical linear equation]
\label{propositionBrezisKato}
If
\(H, K \in L^{\frac{2N}{\alpha}} (\R^N) + L^{\frac{2 N}{\alpha + 2}} (\R^N)\) and \(u \in H^1 (\R^N)\) solves
\[
 - \Delta u + u = (I_\alpha \ast H u) K,
\]
then \(u \in L^p (\R^N)\) for every \(p \in [2, \frac{N}{\alpha} \frac{2 N}{N - 2})\).
Moreover, there exists a constant \(C_p\) independent of \(u\) such that
\[
  \Bigl(\int_{\R^N} \abs{u}^p \Bigr)^\frac{1}{p} \le C_p \Bigl(\int_{\R^N} \abs{u}^2\Bigr)^\frac{1}{2}.
\]
\end{proposition}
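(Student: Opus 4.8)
The plan is to adapt the Brezis--Kato argument \cite{BrezisKato1979} to the nonlocal setting by a Moser-type iteration based on truncated test functions. For \(T > 0\) set \(u_T = \min\{\abs{u}, T\}\); for a parameter \(\beta \ge 1\), test the equation against \(\varphi = u\, u_T^{2(\beta - 1)}\), which belongs to \(H^1(\R^N)\) since \(u_T\) is bounded and \(\nabla \varphi\) is controlled pointwise by a constant times \(\abs{\nabla u}\). Writing \(w_T = u\, u_T^{\beta - 1}\), the left-hand side \(\int_{\R^N} \nabla u \cdot \nabla \varphi + u\varphi\) is, by the usual computation comparing \(\nabla w_T\) with \(u_T^{\beta-1}\nabla u\) and then the Sobolev inequality, bounded below by \(\tfrac{c}{\beta}\norm{w_T}_{2N/(N-2)}^2\); moreover \(\norm{w_T}_2^2 = \int_{\R^N} u^2 u_T^{2(\beta - 1)} \le \int_{\R^N}\abs{u}^{2\beta}\), which is finite as soon as \(u \in L^{2\beta}(\R^N)\).

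The heart of the argument is the estimate of the nonlinear term \(\int_{\R^N}\bigl(I_\alpha \ast (H u)\bigr) K \varphi\). I would split \(H = H' + H''\) and \(K = K' + K''\) so that \(\norm{H'}_{2N/\alpha} + \norm{K'}_{2N/\alpha}\) is as small as desired and \(H'', K'' \in L^{2N/(\alpha+2)}(\R^N) + L^\infty(\R^N)\), and expand the integral accordingly. Each piece is handled with the Hardy--Littlewood--Sobolev inequality \eqref{eqHLS}; the crux is to allocate the two occurrences of \(u\) in the product --- the one inside \(I_\alpha \ast (Hu)\) and the one sitting in \(\varphi\) (which obeys \(\abs{\varphi}\le\abs{w_T}^{2-1/\beta}\le\abs{u}^{2\beta-1}\)) --- between the two \eqref{eqHLS}-factors in such a way that every exponent that enters the convolution stays strictly inside the admissible interval \((1, N/\alpha)\), and that the power of \(u\) one must invoke (away from \(w_T\)) is one already controlled at the current step. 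The contributions of the small-norm parts \(H', K'\) are absorbed into the left-hand side; the remaining contributions are controlled by \(\norm{w_T}_2^2 = \norm{u}_{2\beta}^{2\beta}\) times finite quantities. Letting \(T\to\infty\) by monotone convergence upgrades \(u \in L^{2\beta}(\R^N)\) to \(u \in L^{\beta \cdot 2N/(N-2)}(\R^N)\), with a quantitative estimate. Starting from \(u \in L^2(\R^N)\cap L^{2N/(N-2)}(\R^N)\) (Sobolev) and iterating finitely many times reaches every \(p < \tfrac N\alpha\,\tfrac{2N}{N-2}\); since the equation is \emph{linear} in \(u\), the estimates are homogeneous of degree one in \(u\), which gives \(\norm{u}_p \le C_p\norm{u}_2\).

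I expect the exponent bookkeeping for the nonlocal term to be the main obstacle. In the local Brezis--Kato estimate the potential term is literally \(\int V w_T^2\), so only the splitting of \(V\) and a single absorption are needed; here the two copies of \(u\) lie on opposite sides of the Riesz kernel, forcing one to assign a low Lebesgue exponent to the copy inside \(I_\alpha\ast(Hu)\) --- so that \(I_\alpha\ast(Hu)\) is as integrable as possible --- while pushing the bulk of the power of \(u\) onto the factor paired with \(K\). The threshold \(p^\ast = \tfrac N\alpha\,\tfrac{2N}{N-2}\) is precisely where this balancing breaks down: as \(p \uparrow p^\ast\), the exponent forced on the \(L^{2N/\alpha}\)-component of \(H\) or of \(K\) drives the relevant convolution exponent to the endpoint \(N/\alpha\), where \eqref{eqHLS} degenerates and the iteration can no longer be closed. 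The remaining points --- that \(\varphi\) is an admissible test function and that the limit \(T\to\infty\) may be taken term by term --- are routine consequences of the truncation and of the integrability of \(u\) already available.
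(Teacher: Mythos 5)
The overall shape of your argument---a Brezis--Kato/Moser-type iteration tested against truncated powers of \(u\), with Hardy--Littlewood--Sobolev estimates for the nonlocal term, iterated finitely many times and using the linearity of the equation to get the homogeneous estimate---is indeed the strategy of the paper. Your identification of the threshold \(\frac{N}{\alpha}\frac{2N}{N-2}\) as the point where the convolution exponent reaches the endpoint \(N/\alpha\) is also correct. However, there are two substantive gaps.

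\emph{The decomposition of \(H, K\) is the wrong way around.} You propose to make the \(L^{2N/\alpha}\)-parts small and let the remainder lie in \(L^{2N/(\alpha+2)} + L^\infty\) with large norms. But the piece that must carry a small coefficient is the one that pairs, after Hardy--Littlewood--Sobolev and H\"older, with the Sobolev-level quantity \(\norm{w_T}_{2N/(N-2)}^2\) to be absorbed into \(\frac{c}{\beta}\int\abs{\nabla w_T}^2\). Just as in the local Brezis--Kato estimate where one makes the \(L^{N/2}\) component small (not the \(L^\infty\) component), here the component that must be made small is the one in \(L^{2N/(\alpha+2)}\). The \(L^{2N/\alpha}\)-parts (the analogue of \(L^\infty\)) pair with the subcritical quantity \(\norm{w_T}_2^2\), which is already controlled by \(\norm{u}_{2\beta}^{2\beta}\); shrinking their norm gains nothing. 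The paper's lemma~\ref{lemmathetaElliptic} makes this explicit: the coefficient \(\epsilon^2\) multiplying \(\int\abs{\nabla u}^2\) is controlled by the \(L^{2N/(\alpha+2)}\)-norms of the decomposition, and the paper chooses the decomposition so that precisely \emph{those} norms are small. With your decomposition the gradient-level term arrives with a large, unabsorbable constant, and the iteration does not close.

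\emph{The limit \(T \to \infty\) is not routine; it is the main technical difficulty.} After applying the absorption lemma, there remains an error term coming from the fact that the factor \(u\) appearing inside the convolution \(I_\alpha\ast(Hu)\) is untruncated, so the two copies of \(u\) differ on \(\{\abs{u} > T\}\). Showing this error vanishes by dominated convergence requires placing \(Hu\) and \(K\abs{u}^{2\beta-1}\) into a pair of Lebesgue spaces dual in the Hardy--Littlewood--Sobolev sense, using only the integrability \(u \in L^{2\beta}\cap L^{\frac{2N}{N-2}}\) available at the current step. For the cross-terms involving the \(L^{2N/(\alpha+2)}\)-components of both \(H\) and \(K\), the duality constraint \(\frac{1}{s}+\frac{1}{r}=1+\frac{\alpha}{N}\) forces \(u\)-exponents that exceed what is controlled, and the argument does not close for all \(\beta\) in the required range. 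The paper handles this by a layer of approximation you have omitted: it first replaces \(H,K\) by truncations \(H_k, K_k \in L^{2N/\alpha}(\R^N)\) dominated pointwise by \(\abs{H},\abs{K}\), constructs approximate solutions \(u_k\) by Lax--Milgram, and carries out the truncation argument on \(u_k\). Because \(H_k, K_k\) then lie in the single space \(L^{2N/\alpha}(\R^N)\), the error term on \(\{\abs{u_k}>\mu\}\) admits a Hardy--Littlewood--Sobolev bound with exponents \(\frac{1}{s}=\frac{\alpha}{2N}+\frac{1}{p}\), \(\frac{1}{r}=\frac{\alpha}{2N}+1-\frac{1}{p}\) that do close, and dominated convergence applies. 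Without this device, your step ``letting \(T\to\infty\) by monotone convergence'' has no justification. (A minor slip: you write \(\norm{w_T}_2^2=\norm{u}_{2\beta}^{2\beta}\); this should be an inequality \(\le\), with equality only in the limit \(T\to\infty\).)
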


Our proof of proposition~\ref{propositionBrezisKato} follows the strategy of Brezis and Kato (see also Trudinger \cite{Trudinger1968}*{Theorem 3}). The adaptation of the argument is complicated by the nonlocal effect of \(u\) 
on the right hand side.
%\cite{AubinLi1999}*{lemma 3.2}

Our main new tool for the proof of proposition~\ref{propositionBrezisKato} is the following lemma, which is a nonlocal counterpart of the estimate \cite{BrezisKato1979}*{lemma 2.1}: if \(V \in L^\infty (\R^N) + L^\frac{N}{2} (\R^N)\), then for every \(\epsilon > 0\), there exists \(C_\epsilon\) such that
\begin{equation}
\label{ineqLocalElliptic}
  \int_{\R^N} V \abs{u}^2 \le \epsilon^2 \int_{\R^N} \abs{\nabla u}^2 + C_\epsilon \int_{\R^N} \abs{u}^2.
\end{equation}

\begin{lemma}
\label{lemmathetaElliptic}
Let \(N \ge 2\), \(\alpha \in (0, 2)\) and \(\theta \in (0, 2)\). If \(H, K \in L^\frac{2 N}{\alpha + 2}(\R^N) + L^\frac{2 N}{\alpha} (\R^N)\) and \(\frac{\alpha}{N} < \theta < 2 - \frac{\alpha}{N}\), then for every \(\epsilon > 0\), there exists \(C_{\epsilon, \theta} \in \R\) such that for every \(u \in H^1 (\R^N)\),
\[
 \int_{\R^N} \bigl( I_{\alpha} \ast (H \abs{u}^{\theta}) \bigr) K \abs{u}^{2 - \theta}
 \le \epsilon^2 \int_{\R^N} \abs{\nabla u}^2 + C_{\epsilon, \theta} \int_{\R^N} \abs{u}^2.
\]
\end{lemma}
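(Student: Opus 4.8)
The goal is to absorb the nonlocal term $\int_{\R^N} \bigl(I_\alpha \ast (H|u|^\theta)\bigr) K |u|^{2-\theta}$ into $\epsilon^2 \int |\nabla u|^2 + C \int |u|^2$. I would first reduce to the case where $H$ and $K$ each lie in a \emph{single} Lebesgue space of the form $L^{2N/(\alpha+2)}$ or $L^{2N/\alpha}$, since the general case follows by writing $H = H_1 + H_2$, $K = K_1 + K_2$ and expanding the product into four pieces (the mixed interpolation constants just add up). The key analytic input is the Hardy--Littlewood--Sobolev inequality~\eqref{eqHLS} together with the Sobolev embedding $H^1(\R^N) \hookrightarrow L^{2N/(N-2)}(\R^N)$; the exponent bookkeeping is arranged so that $H|u|^\theta$ and $K|u|^{2-\theta}$ land in conjugate-type spaces $L^{2N/(N+\alpha)}$ after the splitting, which is exactly what HLS needs.

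\textbf{Main estimate.} For a fixed choice, say $H \in L^{2N/(\alpha+2)}$, I would apply Hölder to get $H|u|^\theta \in L^{r}$ where $\tfrac1r = \tfrac{\alpha+2}{2N} + \tfrac{\theta(N-2)}{2N}$ (putting $|u|^\theta$ in $L^{2N/(\theta(N-2))}$ via Sobolev, which is legitimate precisely because $\theta < 2 - \alpha/N < 2$ keeps the exponent admissible). A symmetric computation handles $K|u|^{2-\theta}$. One checks that the two resulting HLS-exponents are dual, so that
\[
 \int_{\R^N} \bigl(I_\alpha \ast (H|u|^\theta)\bigr) K|u|^{2-\theta}
 \le C \, \|H\|_{L^{2N/(\alpha+2)}} \, \|K\|_{\cdot} \, \|u\|_{L^{2N/(N-2)}}^{?} \, \|u\|_{L^2}^{?}
\]
with total power of $u$ equal to $2$, split as a power of $\|u\|_{L^{2N/(N-2)}}$ strictly less than $2$ (this uses $\theta > \alpha/N$ and $\theta < 2-\alpha/N$, which force both exponents on the two $|u|$-factors to avoid the critical one entirely) and a complementary positive power of $\|u\|_{L^2}$. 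Then Young's inequality $ab \le \eta a^{p} + C_\eta b^{p'}$ with the right exponents converts the sublinear-in-$\|\nabla u\|$ term into $\epsilon^2 \|\nabla u\|_{L^2}^2 + C_{\epsilon,\theta} \|u\|_{L^2}^2$, after using Sobolev once more to replace $\|u\|_{L^{2N/(N-2)}}$ by $\|\nabla u\|_{L^2}$. Summing the four split contributions gives the claim.

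\textbf{Where the work is.} The genuinely delicate point is verifying that the constraint $\tfrac{\alpha}{N} < \theta < 2 - \tfrac{\alpha}{N}$ is \emph{exactly} what makes every exponent in this chain admissible: one needs $2N/(\theta(N-2)) \in (2, 2N/(N-2)]$ for the Sobolev-interpolation of $|u|^\theta$, one needs the HLS exponent $r$ to lie in $(1, N/\alpha)$, and one needs the power of $\|u\|_{L^{2N/(N-2)}}$ to be \emph{strictly} below $2$ so that Young's inequality leaves room for the $\epsilon^2$. Each of these reduces to a linear inequality in $\theta$ whose endpoints are $\alpha/N$ and $2-\alpha/N$; checking them is routine but must be done carefully, and the symmetry $\theta \leftrightarrow 2-\theta$ (swapping the roles of the $H$- and $K$-factors) is what guarantees the dual pair closes up. A secondary technical nuisance is keeping the constants in the four-term splitting honest, but that is purely bookkeeping. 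The reduction to single-space $H,K$ and the HLS+Sobolev+Young skeleton are the substance; the inequality $\alpha < 2$ is used to ensure $2N/(\alpha+2) < 2N/\alpha$ makes sense and that $I_\alpha \ast$ maps between the relevant spaces.
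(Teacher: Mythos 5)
Your skeleton — decompose $H$ and $K$, then combine the Hardy–Littlewood–Sobolev inequality with H\"older, Sobolev and Young — is the same as the paper's, but there is a genuine gap in the mechanism you give for producing the $\epsilon^2$. You claim that $\frac{\alpha}{N}<\theta<2-\frac{\alpha}{N}$ forces the power of $\|u\|_{L^{2N/(N-2)}}$ in the bound to be strictly below $2$, leaving room for Young's inequality. This is false for the contribution where \emph{both} $H$ and $K$ sit in $L^{2N/(\alpha+2)}(\R^N)$. A direct exponent count (the HLS duality constraint forces the two exponents carried by $u$ to sum to $\frac{N-2}{N}$, which is already the contribution of $\|u\|_{L^{2N/(N-2)}}^2$ alone) shows there is no admissible way to put any of the $u$-mass into $L^2$; one is forced to
\[
 \int_{\R^N} \bigl(I_\alpha \ast (H\abs{u}^\theta)\bigr) K\abs{u}^{2-\theta}
 \le C\,\norm{H}_{L^{2N/(\alpha+2)}}\norm{K}_{L^{2N/(\alpha+2)}}\,\norm{u}_{L^{2N/(N-2)}}^{2}
 \le C'\,\norm{H}_{L^{2N/(\alpha+2)}}\norm{K}_{L^{2N/(\alpha+2)}}\int_{\R^N}\abs{\nabla u}^2,
\]
with the critical Sobolev norm appearing to the power exactly $2$ and no $L^2$-factor left over. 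For this term the $\theta$-hypothesis only ensures the intermediate H\"older exponents are $\ge 1$; it does not shave the power down, and Young cannot help.

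The missing idea, which is the actual crux of the paper's proof, is that in the decomposition $H=H^*+H_*$, $K=K^*+K_*$ with $H^*,K^*\in L^{2N/\alpha}$ and $H_*,K_*\in L^{2N/(\alpha+2)}$, the $L^{2N/(\alpha+2)}$-parts can be chosen with \emph{arbitrarily small norm} (truncate and put the bulk in $L^{2N/\alpha}$), exactly as in the classical Brezis–Kato argument for $V\in L^{N/2}+L^\infty$. It is the smallness of $\norm{H_*}_{L^{2N/(\alpha+2)}}\norm{K_*}_{L^{2N/(\alpha+2)}}$ that yields the $\epsilon^2$ on the worst term; Young's inequality is only needed to treat the two cross terms $H_*K^*$ and $H^*K_*$, where one does get a genuine interpolation $\bigl(\int\abs{u}^2\bigr)^{1/2}\bigl(\int\abs{u}^{2N/(N-2)}\bigr)^{1/2-1/N}$ with sub-quadratic exponent. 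Your phrase that ``the mixed interpolation constants just add up'' misses that the decomposition must be chosen $\epsilon$-adaptively; without that, the bound on the $H_*K_*$ piece cannot be made smaller than a fixed constant times $\int\abs{\nabla u}^2$. The paper organizes the exponent bookkeeping cleanly by isolating lemma~\ref{lemmathetaHolder} as a separate interpolation statement, which is worth imitating rather than carrying inline; the full strength of $\frac{\alpha}{N}<\theta<2-\frac{\alpha}{N}$ is used precisely for the $H^*K^*$ term.
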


In the limit \(\alpha = 0\), this result is consistent with \eqref{ineqLocalElliptic}; the parameter \(\theta\) only plays a role in the nonlocal case.

In order to prove lemma~\ref{lemmathetaElliptic}, we shall use several times the following inequality.

\begin{lemma}
\label{lemmathetaHolder}
Let \(q, r, s, t \in [1, \infty)\) and \(\lambda \in [0, 2] \) such that
\[
 1 + \frac{\alpha}{N} - \frac{1}{s} - \frac{1}{t} =  \frac{\lambda}{q} + \frac{2 - \lambda}{r},
\]
If \(\theta \in (0, 2)\) satisfies
\begin{gather*}
\min(q, r) \Bigl(\frac{\alpha}{N} - \frac{1}{s}\Bigr) < \theta < \max(q, r) \Bigl(1 - \frac{1}{s}\Bigr),\\
\min(q, r) \Bigl(\frac{\alpha}{N} - \frac{1}{t}\Bigr) < 2 - \theta < \max(q, r) \Bigl(1 - \frac{1}{t}\Bigr),
\end{gather*}
then for every  \(H \in L^s (\R^N)\), \(K \in L^t (\R^N)\) and \(u \in L^q (\R^N) \cap L^r (\R^N)\),
\[
 \int_{\R^N} (I_\alpha \ast \bigl(H \abs{u}^\theta)\bigr) K \abs{u}^{2 - \theta}
 \le C \Bigl(\int_{\R^N} \abs{H}^s \Bigr)^\frac{1}{s}\Bigl(\int_{\R^N} \abs{K}^t \Bigr)^\frac{1}{t} \Bigl(\int_{\R^N} \abs{u}^q \Bigr)^\frac{\lambda}{q}
 \Bigl(\int_{\R^N} \abs{u}^r \Bigr)^\frac{2 - \lambda}{r}.
\]
\end{lemma}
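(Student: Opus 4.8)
The plan is to estimate the left-hand side directly by the Hardy--Littlewood--Sobolev inequality followed by a sequence of Hölder inequalities, and then to use interpolation in Lebesgue spaces to convert the two resulting norms of $u$ into the product $\norm{u}_q^{\lambda}\norm{u}_r^{2-\lambda}$. First I would write $I_\alpha \ast (H\abs{u}^\theta)$ and pair it against $K\abs{u}^{2-\theta}$: by the (dual form of the) Hardy--Littlewood--Sobolev inequality, for suitable exponents $a, b \in (1,\infty)$ with $\frac{1}{a} + \frac{1}{b} = 1 + \frac{\alpha}{N}$,
\[
 \int_{\R^N} \bigl(I_\alpha \ast (H\abs{u}^\theta)\bigr) K\abs{u}^{2-\theta}
 \le C \Bigl(\int_{\R^N} \abs{H\abs{u}^\theta}^{a}\Bigr)^{1/a}\Bigl(\int_{\R^N} \abs{K\abs{u}^{2-\theta}}^{b}\Bigr)^{1/b}.
\]
I would then apply Hölder's inequality to each of the two factors: $\norm{H\abs{u}^\theta}_{a} \le \norm{H}_{s}\,\norm{\,\abs{u}^\theta\,}_{\rho}$ where $\frac{1}{a} = \frac{1}{s} + \frac{1}{\rho}$, i.e.\ $\norm{\,\abs{u}^\theta\,}_{\rho} = \norm{u}_{\theta\rho}^{\theta}$, and similarly $\norm{K\abs{u}^{2-\theta}}_{b} \le \norm{K}_{t}\,\norm{u}_{(2-\theta)\sigma}^{2-\theta}$ with $\frac{1}{b} = \frac{1}{t} + \frac{1}{\sigma}$. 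This produces a bound of the form $C\norm{H}_s\,\norm{K}_t\,\norm{u}_{\theta\rho}^{\theta}\,\norm{u}_{(2-\theta)\sigma}^{2-\theta}$.

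The second step is to show that the two exponents $\theta\rho$ and $(2-\theta)\sigma$ that arise both lie in the interval $[\min(q,r),\max(q,r)]$, so that by Lebesgue interpolation $\norm{u}_{\theta\rho} \le \norm{u}_q^{1-\mu}\norm{u}_r^{\mu}$ and $\norm{u}_{(2-\theta)\sigma} \le \norm{u}_q^{1-\nu}\norm{u}_r^{\nu}$ for appropriate $\mu,\nu \in [0,1]$. Multiplying these out and collecting the powers of $\norm{u}_q$ and $\norm{u}_r$, one checks that the total homogeneity in $u$ is $2$ and that the balance condition
\[
 1 + \frac{\alpha}{N} - \frac{1}{s} - \frac{1}{t} = \frac{\lambda}{q} + \frac{2-\lambda}{r}
\]
is exactly what forces the exponents of $\norm{u}_q$ and $\norm{u}_r$ to sum to $\lambda$ and $2-\lambda$ respectively (this is just the requirement that $\frac{1}{a} + \frac{1}{b} = 1 + \frac{\alpha}{N}$ rewritten through the chain of Hölder relations). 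The two displayed double inequalities on $\theta$ and $2-\theta$ are precisely the conditions guaranteeing, on one hand, that $a,b \in (1,\infty)$ so HLS applies, and on the other, that $\theta\rho$ and $(2-\theta)\sigma$ fall between $q$ and $r$ so the interpolation is legitimate: e.g.\ $\theta\rho \ge \min(q,r)$ unwinds to $\theta \ge \min(q,r)(\frac{1}{\min(q,r)} - \frac{1}{\rho})$, and since $\frac{1}{\rho} = \frac{1}{a} - \frac{1}{s}$ with $\frac{1}{a}$ ranging over an interval determined by the HLS constraint, this becomes $\min(q,r)(\frac{\alpha}{N} - \frac{1}{s}) < \theta$, matching the hypothesis; the upper bound $\theta < \max(q,r)(1 - \frac{1}{s})$ comes from requiring $\theta\rho \le \max(q,r)$, and symmetrically for $2-\theta$ with $t$ in place of $s$.

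The main obstacle is purely bookkeeping: there is genuine freedom in the choice of the HLS exponents $a,b$ (only their reciprocals' sum is pinned down), and one must verify that this freedom can be used to place \emph{both} interpolation exponents inside $[\min(q,r),\max(q,r)]$ \emph{simultaneously}, which is where the four strict inequalities enter in a coupled way. I would handle this by treating $\frac{1}{a}$ (equivalently $\lambda$) as the free parameter, expressing $\theta\rho$, $(2-\theta)\sigma$, $\mu$, $\nu$ as explicit affine functions of it, and checking that the admissible range of $\frac{1}{a}$ cut out by $a,b>1$ and by the two-sided membership conditions is nonempty precisely under the stated hypotheses; the endpoint cases $\lambda \in \{0,2\}$ and $q = r$ should be noted as the places where some inequalities degenerate to equalities and the argument still goes through with $L^q = L^r$. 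Once the exponents are pinned down, everything reduces to one application of HLS and a finite number of Hölder and interpolation inequalities, so no further analytic input is needed.
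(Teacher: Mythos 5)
Your proposal is correct and takes essentially the same route as the paper: apply the Hardy--Littlewood--Sobolev inequality with a free split of the exponent \(1+\tfrac{\alpha}{N}\), then use H\"older to peel off \(\norm{H}_{L^s}\) and \(\norm{K}_{L^t}\) and show the remaining powers of \(u\) interpolate between \(L^q\) and \(L^r\), with the four strict inequalities on \(\theta\) and \(2-\theta\) guaranteeing the free parameter can be chosen so all intermediate exponents are admissible. The only cosmetic difference is that you perform a two-function H\"older to reach \(\norm{u}_{L^{\theta\rho}}^{\theta}\) and then invoke Lebesgue interpolation, while the paper folds the interpolation directly into a three-function H\"older via the splitting parameter \(\mu\); the admissible parameter ranges and the final bookkeeping are the same.
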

\begin{proof}
First observe that if \(\Tilde{s} > 1\), \(\Tilde{t} > 1\), satisfy \(\frac{1}{\Tilde{t}} + \frac{1}{\Tilde{s}} = 1 + \frac{\alpha}{N} \), the Hardy--Littlewood--Sobolev inequality is applicable and
\[
  \int_{\R^N} \bigl( I_{\alpha} \ast (H \abs{u}^{\theta}) \bigr) K \abs{u}^{2 - \theta}
  \le C \Bigl(\int_{\R^N} \bigabs{H u^\theta}^{\Tilde{s}} \Bigr)^{\frac{1}{\Tilde{s}}}\Bigl(\int_{\R^N} \bigabs{K u^{2 - \theta}}^{\Tilde{t}} \Bigr)^{\frac{1}{\Tilde{t}}}.
\]
Let \(\mu \in \R\). Note that if
\begin{align}
\label{eqInterpCond1}
0 &\le \mu \le \theta &
& \text{ and }&
\frac{1}{\Tilde{s}} := \frac{\mu}{q} + \frac{\theta - \mu}{r} + \frac{1}{s} &< 1,
\end{align}
then by H\"older's inequality
\[
  \Bigl(\int_{\R^N} \bigabs{H u^\theta}^{\Tilde{s}} \Bigr)^{\frac{1}{\Tilde{s}}}
  \le  \Bigl(\int_{\R^N} \abs{H}^s \Bigr)^\frac{1}{s}
\Bigl(\int_{\R^N} \abs{u}^q \Bigr)^\frac{\mu}{q}
 \Bigl(\int_{\R^N} \abs{u}^r \Bigr)^\frac{\theta - \mu}{r}.
\]
Similarly, if
\begin{align}
\label{eqInterpCond2}
\lambda - (2 - \theta) &\le \mu \le \lambda &
& \text{ and }&
\frac{1}{\Tilde{t}} := \frac{\lambda - \mu}{q} + \frac{(2 - \theta) - (\lambda-\mu)}{r} + \frac{1}{t} &< 1,
\end{align}
then
\[
  \Bigl(\int_{\R^N} \bigabs{K u^{2 - \theta}}^{\Tilde{t}} \Bigr)^{\frac{1}{\Tilde{t}}}
  \le  \Bigl(\int_{\R^N} \abs{K}^t \Bigr)^\frac{1}{t}
\Bigl(\int_{\R^N} \abs{u}^q \Bigr)^\frac{\lambda - \mu}{q}
 \Bigl(\int_{\R^N} \abs{u}^r \Bigr)^\frac{2 - \theta - (\lambda - \mu)}{r}.
\]
It can be checked that \eqref{eqInterpCond1} and \eqref{eqInterpCond2} can be satisfied for some \(\mu \in \R\) if and only the assumptions of the lemma hold.
In particular,
\(\frac{1}{\Tilde{t}} + \frac{1}{\Tilde{s}} = \frac{1}{s} + \frac{1}{t} =  \frac{\lambda}{q} + \frac{2 - \lambda}{r} = 1 + \frac{\alpha}{N} \), so that we can conclude.
\end{proof}

\begin{proof}[Proof of lemma~\ref{lemmathetaElliptic}]
Let  \(H = H^* + H_*\) and \(K = K^* + K_*\) with \(H^*, K^* \in L^\frac{2 N}{\alpha}(\R^N)\) and \(H_*, K_* \in L^\frac{2 N}{\alpha + 2} (\R^N)\).
Applying lemma~\ref{lemmathetaHolder}, with \(q = r = \frac{2 N}{N - 2}\), \(s = t = \frac{2 N}{\alpha + 2}\) and \(\lambda = 0\), we have since \(\abs{\theta - 1} < \frac{N - \alpha}{N - 2}\),
\[
 \int_{\R^N} \bigl( I_{\alpha} \ast (H_* \abs{u}^{\theta}) \bigr) (K_* \abs{u}^{2 - \theta}) \le C \Bigl(\int_{\R^N} \abs{H_*}^\frac{2 N}{\alpha + 2} \Bigr)^\frac{\alpha + 2}{2N}\Bigl(\int_{\R^N} \abs{K_*}^\frac{2 N}{\alpha + 2} \Bigr)^\frac{\alpha + 2}{2N}
\Bigl(\int_{\R^N} \abs{u}^\frac{2 N}{N - 2} \Bigr)^{1 - \frac{2}{N}}.
\]
Taking now, \(s = t = \frac{2 N}{\alpha}\), \(q= r = 2\) and \(\lambda = 2\), we have since \(\abs{\theta - 1} < \frac{N - \alpha}{N}\),
\[
 \int_{\R^N} \bigl( I_{\alpha} \ast (H^* \abs{u}^{\theta}) \bigr) (K^* \abs{u}^{2 - \theta}) \le C \Bigl(\int_{\R^N} \abs{H^*}^\frac{2 N}{\alpha} \Bigr)^\frac{\alpha}{2N}
 \Bigl(\int_{\R^N} \abs{K^*}^\frac{2 N}{\alpha} \Bigr)^\frac{\alpha}{2N}
\int_{\R^N} \abs{u}^2.
\]
Similarly, with
\(s = \frac{2 N}{\alpha + 2}\), \(t = \frac{2 N}{\alpha}\), \(q=2\), \(r=\frac{2 N}{N - 2}\)  and \(\lambda = 1\),
\begin{multline*}
 \int_{\R^N} \bigl( I_{\alpha} \ast (H_* \abs{u}^{\theta}) \bigr) (K^* \abs{u}^{2 - \theta}) \\
 \le C \Bigl(\int_{\R^N} \abs{H_*}^\frac{2 N}{\alpha + 2} \Bigr)^\frac{\alpha + 2}{2N}
\Bigl(\int_{\R^N} \abs{K^*}^\frac{2 N}{\alpha} \Bigr)^\frac{\alpha}{2N}
\Bigl(\int_{\R^N} \abs{u}^2\Bigr)^\frac{1}{2}\Bigl(\int_{\R^N} \abs{u}^\frac{2 N}{N - 2} \Bigr)^{\frac{1}{2} - \frac{1}{N}}
\end{multline*}
and with \(s = \frac{2 N}{\alpha}\), \(t = \frac{2 N}{\alpha + 2}\),  \(q=2\), \(r=\frac{2 N}{N - 2}\) and \(\lambda = 1\),
\begin{multline*}
 \int_{\R^N} \bigl( I_{\alpha} \ast (H^* \abs{u}^{\theta}) \bigr) (K_* \abs{u}^{2 - \theta}) \\
 \le C \Bigl(\int_{\R^N} \abs{H^*}^\frac{2 N}{\alpha} \Bigr)^\frac{\alpha}{2N} \Bigl(\int_{\R^N} \abs{K_*}^\frac{2 N}{\alpha + 2} \Bigr)^\frac{\alpha + 2}{2N}
\Bigl(\int_{\R^N} \abs{u}^2\Bigr)^\frac{1}{2} \Bigl(\int_{\R^N} \abs{u}^\frac{2 N}{N - 2} \Bigr)^{\frac{1}{2} - \frac{1}{N}}.
\end{multline*}
By the Sobolev inequality, we have thus proved that for every \(u \in H^1 (\R^N)\),
\begin{multline*}
 \int_{\R^N} \bigl( I_{\alpha} \ast (H \abs{u}^{\theta}) \bigr) (K \abs{u}^{2 - \theta})\\
 \le C \biggl(\Bigl(\int_{\R^N} \abs{H_*}^\frac{2 N}{\alpha + 2} \int_{\R^N} \abs{K_*}^\frac{2 N}{\alpha + 2} \Bigr)^\frac{\alpha + 2}{2N}
\int_{\R^N} \abs{\nabla u}^2\\
+ \Bigl(\int_{\R^N} \abs{H^*}^\frac{2 N}{\alpha} \int_{\R^N} \abs{K^*}^\frac{2 N}{\alpha} \Bigr)^\frac{\alpha}{2 N}
\int_{\R^N} \abs{u}^2  \biggr).
\end{multline*}
The conclusion follows by choosing \(H^*\) and \(K^*\) such that
\[
C \Bigl(\int_{\R^N} \abs{H_*}^\frac{2 N}{\alpha + 2} \int_{\R^N} \abs{K_*}^\frac{2 N}{\alpha + 2} \Bigr)^\frac{\alpha + 2}{2N} \le \epsilon^2.\qedhere
\]
\end{proof}

\begin{proof}[Proof of proposition~\ref{propositionBrezisKato}]
By Lemma~\ref{lemmathetaElliptic} with \(\theta = 1\), there exists \(\lambda > 0\) such that for every \(\varphi \in H^1 (\R^N)\),
\[
 \int_{\R^n} \bigl(I_\alpha \ast \abs{H \varphi}\bigr) \abs{K \varphi}
 \le \frac{1}{2} \int_{\R^n} \abs{\nabla \varphi}^2 + \frac{\lambda}{2} \int_{\R^N} \abs{\varphi}^2.
\]
Choose sequences \((H_k)_{k \in \N}\) and \((K_k)_{k \in \N}\) in  \(L^\frac{2 N}{\alpha} (\R^N)\) such that
\(\abs{H_k} \le \abs{H}\) and \(\abs{K_k} \le \abs{K}\), and \(H_k \to H\) and \(K_k \to K\) almost everywhere in \(\R^N\).
For each \(k \in \N\), the form \(a_k : H^1 (\R^N) \times H^1 (\R^N) \to \R\) defined for \(u \in H^1 (\R^N)\) and \(v \in H^1 (\R^N)\) by
\[
  a_k (u, v) = \int_{\R^N} \nabla u \cdot \nabla v + \lambda u v - \int_{\R^N} (I_\alpha \ast H_k u) K_k v
\]
is bilinear and coercive; by the Lax--Milgram theorem \cite{Brezis}*{corollary 5.8}, there exists a unique solution \(u_k \in H^1 (\R^N)\) of
\begin{equation}
\label{eqBrezisKatoApprox}
  - \Delta u_k + \lambda u_k = \bigl(I_\alpha \ast (H_k u_k)\bigr) K_k + (\lambda - 1) u.
\end{equation}
It can be proved that the sequence \((u_k)_{k \in \N}\) converges weakly to \(u\) in \(H^1 (\R^N)\) as \(k \to \infty\).

For \(\mu > 0\), we define the truncation \(u_{k, \mu} : \R^N \to \R\) for \(x \in \R^N\) by
\[
  u_{k, \mu} (x)
  = \begin{cases}
       -\mu & \text{if \(u_k (x) \le - \mu\)},\\
       u_k (x) & \text{if \(- \mu < u_k (x) < \mu\)},\\
       \mu & \text{if \(u_k (x) \ge \mu\)},
    \end{cases}
\]
Since \( \abs{u_{k, \mu}}^{p - 2} u_{k, \mu} \in H^1 (\R^N)\), we can take it as a test function in \eqref{eqBrezisKatoApprox}:
\begin{multline*}
 \int_{\R^N} \tfrac{4(p - 1)}{p^2} \bigabs{\nabla (u_{k, \mu})^\frac{p}{2}}^2
 + \bigabs{\abs{u_{k, \mu}}^\frac{p}{2}}^2
 \le \int_{\R^N} (p - 1) \abs{u_{k, \mu}}^{p - 2}  \bigabs{\nabla u_{k, \mu}}^2
 + \abs{u_{k, \mu}}^{p - 2} u_{k, \mu} u_k\\
 = \int_{\R^N} \bigl(I_\alpha \ast (H_k u_k)\bigr) \bigl(K_k \abs{u_{k, \mu}}^{p - 2} u_{k, \mu}\bigr) + (\lambda - 1) u\abs{u_{k, \mu}}^{p - 2} u_{k, \mu}.
\end{multline*}
If \(p < \frac{2 N}{\alpha}\), by lemma~\ref{lemmathetaElliptic} with \(\theta = \frac{2}{p}\), there exists \(C > 0\) such that
\[
\begin{split}
 \int_{\R^N} \bigl(I_\alpha \ast \abs{H_k u_{k, \mu}}\bigr) \bigl(\abs{K_k} \abs{u_{k, \mu}}^{p - 2} u_{k, \mu}\bigr)
 &\le \int_{\R^N} \bigl(I_\alpha \ast (\abs{H}\abs{u_{k, \mu}})\bigr) \bigl(\abs{K} \abs{u_{k, \mu}}^{p - 1} \bigr)\\
 &\le \tfrac{2 (p - 1)}{p^2} \int_{\R^N} \bigabs{\nabla (u_{k, \mu})^\frac{p}{2}}^2
 + C \int_{\R^N} \bigabs{\abs{u_{k, \mu}}^\frac{p}{2}}^2.
\end{split}
\]
We have thus
\[
 \tfrac{2 (p - 1)}{p^2} \int_{\R^N} \bigabs{\nabla (u_{k, \mu})^\frac{p}{2}}^2
 \le C' \int_{\R^N} \bigl(\abs{u_{k}}^p + \abs{u}^p\bigr)
 + \int_{A_{k, \mu}} \bigl(I_\alpha \ast (\abs{K_k} \abs{u_{k}}^{p - 1})\bigr) \abs{H_k u_k},
\]
where
\[
  A_{k, \mu} = \bigl\{x \in \R^N : \abs{u_{k} (x)} > \mu \bigr\}.
\]
Since \(p < \frac{2 N}{\alpha}\), by the Hardy--Littlewood--Sobolev inequality,
\[
 \int_{A_{k, \mu}} \bigl(I_\alpha \ast (\abs{K_k} \abs{u_{k}}^{p - 1})\bigr) \abs{H_k u_k}
 \le C \Bigl(\int_{\R^N} \bigabs{\abs{K_k} \abs{u_k}^{p - 1}}^r\Bigr)^\frac{1}{r}\Bigl(\int_{A_{k, \mu}} \abs{H_k u_k}^s\Bigr)^\frac{1}{s},
\]
with \(\frac{1}{r} = \frac{\alpha}{2 N} + 1 - \frac{1}{p}\) and \(\frac{1}{s} = \frac{\alpha}{2 N} +  \frac{1}{p}\). By H\"older's inequality, if \(u_k \in L^p (\R^N)\), then \(\abs{K_k} \abs{u_{k}}^{p - 1} \in L^r (\R^N)\) and \(\abs{H_k u_{k}} \in L^s (\R^N)\), whence by Lebesgue's dominated convergence theorem
\[
 \lim_{\mu \to \infty} \int_{A_{k, \mu}} \bigl(I_\alpha \ast (\abs{K_k} \abs{u_{k}}^{p - 1})\bigr) \abs{H_k u_k} = 0.
\]
In view of the Sobolev estimate, we have proved the inequality
\[
  \limsup_{k \to \infty} \Bigl(\int_{\R^N} \abs{u_k}^\frac{p N}{N - 2} \Bigr)^{1 - \frac{2}{N}}
  \le C'' \limsup_{k \to \infty} \int_{\R^N} \abs{u_k}^p.
\]
By iterating over \(p\) a finite number of times we cover the range \(p \in [2, \frac{2N}{\alpha})\).
\end{proof}

\begin{remark}
\label{remarkUniformRegularity}
A close inspection of the proofs of lemma~\ref{lemmathetaElliptic} and of proposition~\ref{propositionBrezisKato} gives a more precise dependence of the constant \(C_p\). Given a function \(M : (0, \infty) \to (0, \infty)\) and \(p \in (2, \frac{N}{\alpha} \frac{2 N}{N - 2})\), then there exists \(C_{p, M}\) such that
if for every \(\epsilon > 0\), \(K\) and \(H\) can be decomposed as \(K = K_* + K^*\) and \(H = H_* + H^*\) with
\begin{align*}
  \Bigl(\int_{\R^N} \abs{K_*}^\frac{2 N}{\alpha + 2}\Bigr)^\frac{2 N}{\alpha + 2} & \le \epsilon &
  &\text{ and }&
  \Bigl(\int_{\R^N} \abs{K^*}^\frac{2 N}{\alpha}\Bigr)^\frac{2 N}{\alpha} & \le M (\epsilon),\\
  \Bigl(\int_{\R^N} \abs{H_*}^\frac{2 N}{\alpha + 2}\Bigr)^\frac{2 N}{\alpha + 2} & \le \epsilon &
  &\text{ and }&
  \Bigl(\int_{\R^N} \abs{H^*}^\frac{2 N}{\alpha}\Bigr)^\frac{2 N}{\alpha} & \le M (\epsilon),
\end{align*}
and if \(u \in H^1 (\R^N)\) satisfies
\[
 - \Delta u + u = (I_\alpha \ast H u) K,
\]
then one has
\[
  \Bigl(\int_{\R^N} \abs{u}^p \Bigr)^\frac{1}{p} \le C_{p, M} \Bigl(\int_{\R^N} \abs{u}^2\Bigr)^\frac{1}{2}.
\]
\end{remark}

\subsection{Regularity of solutions}
Here we prove of the regularity result for the nonlinear problem~\eqref{problemMain}.

\begin{proof}[Proof of proposition~\ref{propositionRegularity}]
Define \(H : \R^N \to \R\) and \(K : \R^N \to \R\) for \(x \in \R^N\) by \(H (x) = F \bigl(u (x)\bigr)/u (x)\) and \(K (x) = f \bigl(u (x)\bigr)\). Observe that for every \(x \in \R^N\),
\[
 K (x) \le C \bigl(\abs{u (x)}^\frac{\alpha}{N} + \abs{u (x)}^\frac{\alpha+2}{N - 2}\bigr)
\]
and
\[
 H (x) \le C \bigl(\tfrac{N}{N+\alpha}\abs{u (x)}^\frac{\alpha}{N} + \tfrac{N - 2}{N + \alpha} \abs{u (x)}^\frac{\alpha+2}{N - 2}\bigr),
\]
so that \(K, H \in L^\frac{2 N}{\alpha} (\R^N) + L^\frac{2 N}{\alpha + 2} (\R^N)\). By proposition~\ref{propositionBrezisKato}, \(u \in L^p (\R^N)\) for every \(p \in [2, \frac{N}{\alpha} \frac{2 N}{N - 2})\).
In view of \eqref{assumptWeakf}, \(F \circ u \in L^q (\R^N)\) for every \(q \in [\frac{2 N}{N + \alpha}, \frac{N}{\alpha} \frac{2 N}{N + \alpha})\).
Since \(\frac{2 N}{N + \alpha} < \frac{N}{\alpha} < \frac{N}{\alpha} \frac{2 N}{N + \alpha}\), we have \(I_\alpha \ast (F \circ u) \in L^\infty (\R^N)\), and thus
\[
  \abs{- \Delta u + u} \le C \bigl(\abs{u}^\frac{\alpha}{N} + \abs{u}^\frac{\alpha + 2}{N - 2}\bigr).
\]
By the classical bootstrap method for subcritical local problems in bounded domains, we deduce that
\(u \in W^{2, p}_{\mathrm{loc}} (\R^N)\) for every \(p \ge 1\).
\end{proof}

\subsection{Poho\v zaev identity}
\label{sectionPohozaev}
The regularity information that has been gained in the previous sections allows to prove a Poho\v zaev integral identity.

\begin{proposition}[Poho\v zaev identity]
\label{propositionPohozhaev}
If \(f \in C (\R; \R)\) satisfies \eqref{assumptWeakf} and \(u\in H^1(\R^N)\) solves \eqref{problemMain},
then
\[
 \frac{N - 2}{2} \int_{\R^N} \abs{\nabla u}^2 + \frac{N}{2} \int_{\R^N} \abs{u}^2
 = \frac{N + \alpha}{2} \int_{\R^N} \bigl(I_\alpha \ast F (u)\bigr) F (u).
\]
and
\[
 \mathcal{I} (u) = \frac{\alpha + 2}{2 (N  + \alpha)} \int_{\R^N} \abs{\nabla u}^2
 + \frac{\alpha}{2 (N + \alpha)} \int_{\R^N} \abs{u}^2.
\]
\end{proposition}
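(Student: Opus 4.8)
**

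The plan is to prove the Pohožaev identity by the standard rescaling argument, using the regularity provided by Proposition~\ref{propositionRegularity} to justify every integration by parts. By that proposition, $u \in W^{2,p}_{\mathrm{loc}}(\R^N)$ for all $p \ge 1$, hence $u$ is $C^1$ and classical elliptic estimates apply locally; combined with $u \in H^1(\R^N)$ and the decay of the Riesz potential, this will be enough. The argument proceeds as follows. First, I would test the equation against the function $x \mapsto \eta_R(x)\, x \cdot \nabla u(x)$, where $\eta_R(x) = \eta(x/R)$ is a standard cutoff with $\eta \equiv 1$ on $B_1$, supported in $B_2$. For each fixed $R$ this is a legitimate test function because $x\cdot\nabla u \in H^1_{\mathrm{loc}}$ by the $W^{2,p}_{\mathrm{loc}}$ regularity. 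Integrating the term $-\Delta u\,(x\cdot\nabla u)\eta_R$ by parts produces, after the usual manipulations, $\frac{N-2}{2}\int |\nabla u|^2\eta_R$ plus terms involving $\nabla\eta_R$; the term $u\,(x\cdot\nabla u)\eta_R = \tfrac12 x\cdot\nabla(u^2)\eta_R$ integrates to $-\frac{N}{2}\int u^2\eta_R$ plus $\nabla\eta_R$-terms.

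The delicate part is the nonlocal right-hand side. Testing against $(x\cdot\nabla u)\eta_R$ gives $\int \bigl(I_\alpha * F(u)\bigr) f(u)\,(x\cdot\nabla u)\,\eta_R = \int \bigl(I_\alpha * F(u)\bigr)\, \bigl(x\cdot\nabla (F(u))\bigr)\eta_R$, since $f(u)\nabla u = \nabla(F(u))$ (valid a.e.\ because $F\in C^1$ and $u\in W^{1,p}_{\mathrm{loc}}$). Here I would use the classical differentiation identity for the Riesz kernel: writing the double integral $\iint I_\alpha(x-y) F(u(x)) F(u(y))$ and performing the change of variables $x\mapsto \lambda x$, $y\mapsto \lambda y$, differentiating in $\lambda$ at $\lambda=1$ yields, using the homogeneity $I_\alpha(\lambda z) = \lambda^{\alpha-N} I_\alpha(z)$, the identity
\[
 2\int_{\R^N} \bigl(I_\alpha * F(u)\bigr)\bigl(x\cdot\nabla(F(u))\bigr)
 = (\alpha - N) \int_{\R^N} \bigl(I_\alpha * F(u)\bigr) F(u).
\]
To make this rigorous with a cutoff, I would instead differentiate $\lambda \mapsto \iint I_\alpha(x-y)F(u_\lambda(x))F(u_\lambda(y))$ with $u_\lambda(x) = u(x/\lambda)$, or equivalently justify the cutoff version by showing the $\nabla\eta_R$ remainder terms vanish as $R\to\infty$. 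The integrability needed is exactly what the Hardy–Littlewood–Sobolev inequality and the bound \eqref{assumptWeakf} give: $F(u) \in L^{2N/(N+\alpha)}(\R^N)$, so $I_\alpha * F(u) \in L^{2N/(N-\alpha)}(\R^N)$, and the remainder terms, which carry a factor $\tfrac{1}{R}|x|\,|\nabla\eta(x/R)|$ supported in $B_{2R}\setminus B_R$, are controlled by tails of convergent integrals and hence tend to $0$.

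Thus the main obstacle is the bookkeeping of the cutoff remainder terms for the nonlocal term and confirming that each one is $o(1)$ as $R \to \infty$; the local terms $\int|\nabla u|^2$ and $\int u^2$ are handled by the now-classical Berestycki–Lions argument (one also needs $x\cdot\nabla u \in L^2$ near infinity in a suitable averaged sense, which follows from a Pohožaev-type a priori argument or simply from the cutoff procedure never requiring it pointwise). Once the identity
\[
 \frac{N-2}{2}\int_{\R^N}|\nabla u|^2 + \frac{N}{2}\int_{\R^N}|u|^2 = \frac{N+\alpha}{2}\int_{\R^N}\bigl(I_\alpha*F(u)\bigr)F(u)
\]
is established, the second displayed formula is pure algebra: combining it with the identity obtained by testing \eqref{problemMain} against $u$ itself, namely $\int|\nabla u|^2 + \int|u|^2 = \int\bigl(I_\alpha*F(u)\bigr)f(u)u$ — wait, more usefully, one uses $\mathcal{I}(u) = \frac12\bigl(\int|\nabla u|^2 + \int|u|^2\bigr) - \frac12\int\bigl(I_\alpha*F(u)\bigr)F(u)$ together with the Pohožaev relation to eliminate $\int\bigl(I_\alpha*F(u)\bigr)F(u)$, giving
\[
 \mathcal{I}(u) = \frac{\alpha+2}{2(N+\alpha)}\int_{\R^N}|\nabla u|^2 + \frac{\alpha}{2(N+\alpha)}\int_{\R^N}|u|^2,
\]
which is the claimed expression.
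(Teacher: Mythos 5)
Your approach is the same as the paper's: use the $W^{2,p}_{\mathrm{loc}}$ regularity from proposition~\ref{propositionRegularity} to justify testing against a cutoff of $x\cdot\nabla u$, integrate by parts, and pass to the limit as the cutoff expands (your $\eta_R(x)=\eta(x/R)$ is the paper's $\varphi(\lambda x)$ with $\lambda=1/R$). The paper handles the nonlocal term by symmetrizing the double integral in $x\leftrightarrow y$ and integrating by parts, producing explicit remainder terms controlled by dominated convergence; your scaling heuristic captures the same mechanism.

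One slip worth fixing: the intermediate identity you write for the nonlocal term has the wrong constant. The change of variables picks up a factor $\lambda^{2N}$ from the Jacobians in addition to $\lambda^{\alpha-N}$ from the homogeneity of $I_\alpha$, giving the scaling exponent $N+\alpha$, so the correct identity is
\[
 2\int_{\R^N} \bigl(I_\alpha \ast F(u)\bigr)\bigl(x\cdot\nabla(F(u))\bigr)
 = -(N+\alpha)\int_{\R^N} \bigl(I_\alpha \ast F(u)\bigr)F(u),
\]
not $(\alpha-N)$ on the right-hand side; with your value you would land on a coefficient $\tfrac{N-\alpha}{2}$ in the Poho\v zaev identity rather than the $\tfrac{N+\alpha}{2}$ you correctly assert at the end. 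The final algebraic step deducing the expression for $\mathcal{I}(u)$ from the Poho\v zaev identity and the definition of $\mathcal{I}$ is correct.
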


This proposition implies in particular that if \(u \ne 0\), then
\[
 \mathcal{I} (u) > 0.
\]

The proof of this proposition is a generalization of the argument for \(f (s) = s^p\) \cite{MorozVanSchaftingenGround} (see also particular cases \citelist{\cite{Menzala-1983}\cite{CingolaniSecchiSquassina2010}*{lemma 2.1}}).
The strategy is classical and consists in testing the equation against a suitable cut-off of \(x \cdot \nabla u(x)\) and integrating by parts \citelist{\cite{Kavian1993}*{proposition 6.2.1}\cite{WillemMinimax}*{appendix B}}.

\begin{proof}[Proof of proposition~\ref{propositionPohozhaev}]
By proposition~\ref{propositionRegularity}, \(u \in W^{2, 2}_\mathrm{loc} (\R^N)\).
Fix \(\varphi \in C^1_c(\R^N)\) such that \(\varphi = 1\) in a neighbourhood of \(0\).
The function \(v_\lambda \in W^{1, 2} (\R^N)\) defined for \(\lambda \in (0, \infty)\) and \(x \in \R^N\) by
\[
 v_\lambda (x) = \varphi(\lambda x)\, x \cdot \nabla u (x),
\]
can be used as a test function in the equation to obtain
\[
   \int_{\R^N} \nabla u \cdot \nabla v_\lambda + \int_{\R^N} u v_\lambda
= \int_{\R^N} \bigl(I_\alpha \ast F (u)\bigr) (f (u) v_\lambda).
\]
The left-hand side can be computed by integration by parts for every \(\lambda > 0\) as
\[
\begin{split}
 \int_{\R^N} u v_\lambda &= \int_{\R^N} u(x) \varphi (\lambda x)\, x \cdot \nabla u(x) \dif x\\
  &= \int_{\R^N} \varphi (\lambda x)\, x \cdot \nabla \bigl(\tfrac{\abs{u}^2}{2}\bigr) (x) \dif x\\
  &= - \int_{\R^N}  \bigl( N \varphi (\lambda x)
                         + \lambda x \cdot \nabla \varphi (\lambda x) \bigr)
                    \frac{\abs{u(x)}^2}{2}
        \dif x.
\end{split}
\]
Lebesgue's dominated convergence theorem implies that
\[
 \lim_{\lambda \to 0} \int_{\R^N} u v_\lambda = - \frac{N}{2} \int_{\R^N} \abs{u}^2.
\]
Similarly, as
\(u \in W^{2, 2}_\mathrm{loc} (\R^N)\), the gradient term can be written as
\[
\begin{split}
 \int_{\R^N} \nabla u \cdot \nabla v_\lambda
  &= \int_{\R^N} \varphi (\lambda x)
                  \bigl(\abs{\nabla u}^2
                        + x \cdot \nabla \bigl(\tfrac{\abs{\nabla u}^2}{2}\bigr) (x) \bigr)
      \dif x\\
  &= - \int_{\R^N}  \bigl( (N - 2) \varphi (\lambda x)
                         + \lambda x \cdot \nabla \varphi (\lambda x) \bigr)
                    \frac{\abs{\nabla u(x)}^2}{2}
        \dif x.
\end{split}
\]
Lebesgue's dominated convergence again is applicable since \(\nabla u \in L^{2} (\R^N)\) and we obtain
\[
 \lim_{\lambda \to 0} \int_{\R^N} \nabla u \cdot \nabla v_\lambda = - \frac{N - 2}{2} \int_{\R^N} \abs{\nabla u}^2.
\]
The last term can be rewritten by integration by parts for every \(\lambda > 0\) as
\[
\begin{split}
 \int_{\R^N} \bigl(I_\alpha \ast F (u)\bigr) &( f(u) v_\lambda)
 = \int_{\R^N} \int_{\R^N} (F \circ u) (y) I_\alpha (x - y) \varphi (\lambda x)\, x \cdot \nabla (F \circ u ) (x) \dif x\dif y\\
 =\,& \frac{1}{2} \int_{\R^N} \int_{\R^N} I_\alpha (x - y) \Bigl((F \circ u)(y) \varphi (\lambda x) \,x \cdot \nabla (F \circ u) (x)\\
  & \qquad \qquad \qquad \qquad \qquad + (F \circ u) (x) \varphi (\lambda y) \,y \cdot \nabla (F \circ u) (y)\Bigr)\dif x\dif y\\
 =\,& - \int_{\R^N} \int_{\R^N} F \bigl(u(y)\bigr) I_\alpha (x - y) \bigl(N \varphi (\lambda x) + x \cdot \nabla \varphi (\lambda x)\bigr) F \bigl(u (x)\bigr) \dif x\dif y\\
  &\quad+ \frac{N - \alpha}{2} \int_{\R^N} \int_{\R^N} F \bigl(u (y)\bigr) I_\alpha (x - y)\\
  & \qquad \qquad \qquad \qquad  \qquad \frac{(x - y) \cdot \bigl(x \varphi (\lambda x) - y \varphi (\lambda y)\bigr)}{\abs{x - y}^2} F \bigl(u (x)\bigr)  \dif x\dif y.
\end{split}
\]
We can thus apply Lebesgue's dominated convergence theorem to conclude that
\[
 \lim_{\lambda \to 0} \int_{\R^N} \bigl(I_\alpha \ast F (u)\bigr) f (u)\, v_\lambda
= - \frac{N + \alpha}{2} \int_{\R^N} (I_\alpha \ast F (u)) F (u).\qedhere
\]
\end{proof}

\section{From solutions to groundstates}

\subsection{Solutions and paths}
\label{sectionPaths}
One of the application of the Poho\v zaev identity of the previous section is the possibility to associate to any variational solution of \eqref{problemMain} a path, following an argument of L.\thinspace Jeanjean and H.\thinspace Tanaka \cite{JeanjeanTanaka2003}.

\begin{proposition}[Lifting a solution to a path]
\label{propositionConstructPath}
If \(f \in C (\R; \R)\) satisfies \eqref{assumptWeakf} and \(u \in H^1 (\R^N) \setminus \{0\}\) solves \eqref{problemMain}, then there exists a path \(\gamma \in C\bigl([0, 1]; H^1 (\R^N)\bigr)\) such that
\begin{align*}
  \gamma (0) & = 0,\\
  \gamma (1/2) & = u,\\
  \mathcal{I} \bigl(\gamma (t)\bigr) & < \mathcal{I} (u),&  &\text{for every \(t \in [0, 1] \setminus \{1/2\}\)},\\
  \mathcal{I} \bigl(\gamma (1)\bigr) & < 0.
\end{align*}
\end{proposition}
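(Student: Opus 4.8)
The plan is to build the path explicitly out of the scaling family that already appeared in the proof of claim~\ref{claimFiniteb}. For \(\tau > 0\) write \(u_\tau(x) = u(x/\tau)\); then
\[
  \mathcal{I}(u_\tau) = \frac{\tau^{N-2}}{2} \int_{\R^N} \abs{\nabla u}^2
  + \frac{\tau^N}{2} \int_{\R^N} \abs{u}^2
  - \frac{\tau^{N+\alpha}}{2} \int_{\R^N} \bigl(I_\alpha \ast F(u)\bigr) F(u).
\]
Since \(u\) solves \eqref{problemMain}, the Poho\v{z}aev identity (proposition~\ref{propositionPohozhaev}) forces \(\int (I_\alpha \ast F(u)) F(u) > 0\) — because the left-hand side of \eqref{eqPohozaev} is strictly positive once \(u \neq 0\) — and therefore the function \(\tau \mapsto \mathcal{I}(u_\tau)\) is a sum of two increasing powers of \(\tau\) minus a higher power of \(\tau\). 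I would show that this function has a \emph{unique} critical point on \((0,\infty)\), that this critical point is \(\tau = 1\), and that it is a strict global maximum. That \(\tau = 1\) is critical is exactly the content of \(\mathcal{P}(u) = 0\), which holds by proposition~\ref{propositionPohozhaev}: differentiating the displayed expression in \(\tau\) and evaluating at \(\tau = 1\) gives precisely \(\mathcal{P}(u)\). Uniqueness and the maximum property follow from an elementary calculus argument: after dividing by \(\tau^{N-2}\), one is comparing \(a + b\tau^2\) with \(c\,\tau^{\alpha+2}\) where \(\alpha + 2 > 2\), so the ``profit minus cost'' has at most one interior stationary point, necessarily a maximum, and \(\mathcal{I}(u_\tau) \to -\infty\) as \(\tau \to \infty\) while \(\mathcal{I}(u_\tau) \to 0\) as \(\tau \to 0\).

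Concretely, I would fix \(\tau_1 > 1\) large enough that \(\mathcal{I}(u_{\tau_1}) < 0\) (possible by the \(\tau^{N+\alpha}\) term dominating) and define \(\gamma\) by reparametrising the curve \(t \mapsto u_{\tau(t)}\) on \([0,1]\): choose a continuous increasing \(\tau : [0,1] \to [0, \tau_1]\) with \(\tau(0) = 0\) (interpreting \(u_0 := 0\)), \(\tau(1/2) = 1\), and \(\tau(1) = \tau_1\). Continuity of \(t \mapsto u_{\tau(t)}\) into \(H^1(\R^N)\), including at \(t = 0\), is routine: \(\norm{u_\tau - u_{\tau'}}_{H^1} \to 0\) as \(\tau \to \tau'\) by dominated convergence / density of smooth functions, and \(\norm{u_\tau}_{H^1}^2 = \tau^{N-2}\norm{\nabla u}_2^2 + \tau^N \norm{u}_2^2 \to 0\) as \(\tau \to 0\). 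The four required properties then read off immediately: \(\gamma(0) = 0\); \(\gamma(1/2) = u\); \(\mathcal{I}(\gamma(1)) = \mathcal{I}(u_{\tau_1}) < 0\); and for \(t \neq 1/2\) we have \(\tau(t) \neq 1\), hence \(\mathcal{I}(\gamma(t)) = \mathcal{I}(u_{\tau(t)}) < \mathcal{I}(u_1) = \mathcal{I}(u)\) by the strict global maximum at \(\tau = 1\).

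The only genuinely substantive point is establishing that \(\tau = 1\) is the \emph{strict global} maximiser of \(\tau \mapsto \mathcal{I}(u_\tau)\), rather than merely a critical point; this is where the positivity of \(\int (I_\alpha \ast F(u)) F(u)\) and the strict inequality \(\alpha + 2 > 2\) (equivalently, the three terms in \eqref{problemMain} scaling with distinct exponents \(N-2 < N < N+\alpha\)) are used in an essential way. Everything else — continuity of the path in \(H^1\), the limiting behaviour as \(\tau \to 0\) and \(\tau \to \infty\), and the reparametrisation — is elementary. I do not expect to need the full strength of proposition~\ref{propositionPohozhaev}: only the fact, recorded in the remark following it, that \(\mathcal{I}(u) > 0\) together with \(\mathcal{P}(u) = 0\), which pins the critical point at \(\tau = 1\).
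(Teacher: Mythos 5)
Your proposal is correct and takes essentially the same route as the paper: the rescaling path \(\Tilde{\gamma}(\tau) = u(\cdot/\tau)\) (with \(\Tilde{\gamma}(0) = 0\)), continuity at \(\tau = 0\) via the scaled \(H^1\)-norm, the Poho\v zaev identity to identify \(\tau = 1\) as the strict global maximum of \(\tau \mapsto \mathcal{I}(u_\tau)\), and a final reparametrisation of \([0,\infty)\) onto \([0,1]\). The only (cosmetic) variation is in how the strict maximum is verified. The paper first substitutes the Poho\v zaev identity into \(\mathcal{I}(u_\tau)\) to eliminate the nonlocal term, obtaining
\[
  \mathcal{I}(u_\tau) = \Bigl(\frac{\tau^{N-2}}{2} - \frac{(N-2)\tau^{N+\alpha}}{2(N+\alpha)}\Bigr)\int_{\R^N}\abs{\nabla u}^2 + \Bigl(\frac{\tau^N}{2} - \frac{N\tau^{N+\alpha}}{2(N+\alpha)}\Bigr)\int_{\R^N}\abs{u}^2,
\]
so that each bracket, being of the form \(\frac{\tau^k}{2} - \frac{k\,\tau^{N+\alpha}}{2(N+\alpha)}\) with \(k < N+\alpha\), is visibly maximised at \(\tau = 1\); you instead keep the nonlocal term, argue by calculus that \(\tau\mapsto\mathcal{I}(u_\tau)\) has a unique interior critical point, and then invoke \(\mathcal{P}(u)=0\) to pin it down, finishing with the boundary behaviour at \(\tau\to 0\) and \(\tau\to\infty\). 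Both routes work. Two small slips in your write-up are worth fixing: (i) the phrase ``after dividing by \(\tau^{N-2}\)'' is not quite right --- the critical points of \(h(\tau) := \mathcal{I}(u_\tau)\) and of \(\tau^{-(N-2)} h(\tau)\) do not coincide, so the quantity you should divide by \(\tau^{N-3}\) (and then analyse) is \(h'(\tau)\), not \(h\) itself; (ii) the closing claim that you ``do not need the full strength of proposition~\ref{propositionPohozhaev}'' is illusory, since \(\mathcal{P}(u)=0\) together with \(\mathcal{I}(u)>0\) \emph{is} exactly the content of that proposition.
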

\begin{proof}
The proof follows closely the arguments for the local problem developed by L.\thinspace Jeanjean and H.\thinspace Tanaka~\cite{JeanjeanTanaka2003}*{lemma 2.1}. We define the path \(\Tilde{\gamma} : [0, \infty) \to H^1 (\R^N)\) by
\[
 \Tilde{\gamma} (\tau) (x) =
 \begin{cases}
  u (x / \tau) & \text{if \(\tau > 0\)},\\
  0 & \text{if \(\tau = 0\)}.
 \end{cases}
\]
The function \(\Tilde{\gamma}\) is continuous on \((0, \infty)\); for every \(\tau > 0\),
\[
  \int_{\R^N} \abs{\nabla \Tilde{\gamma} (\tau)}^2 + \abs{\Tilde{\gamma} (\tau)}^2 =
  \tau^{N - 2} \int_{\R^N} \abs{\nabla u}^2 + \tau^N \int_{\R^N} \abs{u}^2,
\]
so that \(\Tilde{\gamma}\) is continuous at \(0\).
By the Poho\v zaev identity of proposition~\ref{propositionPohozhaev}, the functional can be computed for every \(\tau > 0\) as
\[
\begin{split}
 \mathcal{I} \bigl( \Tilde{\gamma} (\tau) \bigr) & = \frac{\tau^{N - 2}}{2} \int_{\R^N} \abs{\nabla u}^2 + \frac{\tau^N}{2} \int_{\R^N} \abs{u}^2 - \frac{\tau^{N + \alpha}}{2} \int_{\R^N} \bigl(I_\alpha \ast F (u)\bigr) F (u)\\ & = \Bigl(\frac{\tau^{N - 2}}{2} - \frac{(N - 2)\tau^{N + \alpha}}{2 (N + \alpha)}\Bigr)\int_{\R^N} \abs{\nabla u}^2 + \Bigl(\frac{\tau^N}{2} - \frac{N \tau^{N + \alpha}}{2 (N + \alpha)}\Bigr) \int_{\R^N} \abs{u}^2.
\end{split}
\]
It can be checked directly that \(\mathcal{I} \circ \Tilde{\gamma}\) achieves strict global maximum at \(1\): for every \(\tau \in [0, \infty) \setminus \{1\}\),
\(\mathcal{I} \bigl( \Tilde{\gamma} (\tau) \bigr) < \mathcal{I} (u)\). Since
\[
 \lim_{\tau \to \infty} \mathcal{I} \bigl(\Tilde{\gamma} (\tau) \bigr) = - \infty,
\]
the path \(\gamma\) can then be defined by a suitable change of variable.
\end{proof}

\subsection{Minimality of the energy and existence of a groundstate}
We now have all the tools available to show that the mountain-pass critical level \(b\) defined in \eqref{definitionb} coincides
with the ground state energy level \(c\) defined in \eqref{GroundStateC}, which completes the proof of theorem~\ref{theoremExistence}.

\begin{proof}[Proof of theorem~\ref{theoremExistence}]
By propositions \ref{propositionMinimax} and \ref{propositionPPS}, there exists a Poho\v zaev--Palais--Smale sequence \((u_n)_{n \in \N}\) in \(H^1 (\R^N)\) at the mountain-pass level \(b>0\), that converges weakly to some \(u \in H^1 (\R^N) \setminus\{0\}\) that solves \eqref{problemMain}. Since \(\lim_{n \to \infty} \mathcal{P} (u_n) = 0\), by the weak convergence of the sequence \((u_n)_{n \in \N}\), the weak lower-semicontinuity of the norm and the Poho\v zaev identity of proposition~\ref{propositionPohozhaev},
\begin{equation}
\label{ineqMinimal}
\begin{split}
 \mathcal{I} (u) &= \mathcal{I} (u) - \frac{\mathcal{P} (u)}{N + \alpha}\\
 & = \frac{\alpha + 2}{2 (N + \alpha)} \int_{\R^N} \abs{\nabla u}^2 + \frac{\alpha}{2 (N + \alpha)} \int_{\R^N} \abs{u}^2\\
 &\le \liminf_{n \to \infty} \frac{\alpha + 2}{2 (N + \alpha)} \int_{\R^N} \abs{\nabla u_n}^2 + \frac{\alpha}{2 (N + \alpha)} \int_{\R^N} \abs{u_n}^2\\
 & = \liminf_{n \to \infty} \mathcal{I} (u_n) - \frac{\mathcal{P} (u_n)}{N + \alpha}= \liminf_{n \to \infty} \mathcal{I} (u_n) = b.
\end{split}
\end{equation}
Since \(u\) is a nontrivial solution of \eqref{problemMain}, we have \(\mathcal{I} (u) \ge c\) by definition
of the ground state energy level \(c\), and hence \(c\le b\).

Let \(v \in H^1 (\R^N)\setminus\{0\}\) be another solution of \eqref{problemMain} such that \(\mathcal{I} (v) \le \mathcal{I} (u)\).
If we lift \(v\) to a path(proposition~\ref{propositionConstructPath}) and recall the definition \eqref{definitionb}
of the mountain-pass level \(b\), we conclude that \(\mathcal{I} (v) \ge b \ge \mathcal{I} (u)\).
We have thus proved that \(\mathcal{I} (v)=\mathcal{I} (u) = b = c\), and this concludes the proof of theorem~\ref{theoremExistence}.
\end{proof}

\subsection{Compactness of the set of groundstates}

As a byproduct of the proof of theorem~\ref{theoremExistence}, the weak convergence of the translated subsequence of proposition \ref{propositionPPS} can be upgraded into strong convergence.

\begin{corollary}[Strong convergence of translated Poho\v zaev--Palais--Smale sequences]
\label{corollaryStrong}
Under the assumptions of proposition~\ref{propositionPPS}, if
\[
 \liminf_{n \to \infty} \int_{\R^N} \abs{\nabla u_n}^2 + \abs{u_n}^2 > 0,
\]
and if
\[
 \liminf_{n \to \infty} \mathcal{I} (u_n) \le c,
\]
then there exists \(u \in H^1 (\R^N) \setminus \{0\}\) such that \(\mathcal{I}' (u) = 0\) and a sequence \((a_n)_{n \in \N}\) of points in \(\R^N\)  such that up to a subsequence \(u_n (\cdot - a_n) \to u\) strongly in \(H^1 (\R^N)\) as \(n \to \infty\).
\end{corollary}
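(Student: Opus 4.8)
The plan is to upgrade the chain of inequalities \eqref{ineqMinimal} from the proof of theorem~\ref{theoremExistence} to an equality, which will force convergence of the relevant norms. Since \(\mathcal{I}\), \(\mathcal{I}'\) and \(\mathcal{P}\) are invariant under translations, the hypotheses on \((u_n)_{n \in \N}\) are unchanged if we replace \(u_n\) by \(u_n(\cdot - a_n)\), so proposition~\ref{propositionPPS} applies; its first alternative is ruled out by the hypothesis \(\liminf_{n \to \infty}\int_{\R^N}\abs{\nabla u_n}^2 + \abs{u_n}^2 > 0\), so after such a translation and the extraction of a subsequence we may assume \(u_n \weakto u\) weakly in \(H^1(\R^N)\) with \(u \in H^1(\R^N) \setminus \{0\}\) and \(\mathcal{I}'(u) = 0\). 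In particular \(u\) is a nontrivial weak solution of \eqref{problemMain}, so \(\mathcal{I}(u) \ge c\) by definition of \(c\), and the Poho\v zaev identity of proposition~\ref{propositionPohozhaev} gives
\[
 \mathcal{I}(u) = \frac{\alpha + 2}{2(N + \alpha)}\int_{\R^N} \abs{\nabla u}^2 + \frac{\alpha}{2(N + \alpha)}\int_{\R^N} \abs{u}^2.
\]

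Next I would exploit the algebraic identity
\[
 \mathcal{I}(u_n) - \frac{1}{N + \alpha}\mathcal{P}(u_n) = \frac{\alpha + 2}{2(N + \alpha)}\int_{\R^N} \abs{\nabla u_n}^2 + \frac{\alpha}{2(N + \alpha)}\int_{\R^N} \abs{u_n}^2,
\]
valid for every \(n\). Together with the boundedness of \(\bigl(\mathcal{I}(u_n)\bigr)_{n \in \N}\) and \(\mathcal{P}(u_n) \to 0\), it shows that \((u_n)_{n \in \N}\) is bounded in \(H^1(\R^N)\); hence, passing to a further subsequence, we may assume \(\int_{\R^N}\abs{\nabla u_n}^2 \to A\), \(\int_{\R^N}\abs{u_n}^2 \to B\) and \(\mathcal{I}(u_n) \to \ell := \liminf_{n \to \infty}\mathcal{I}(u_n)\). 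Letting \(n \to \infty\) in the identity and using \(\mathcal{P}(u_n) \to 0\) gives \(\frac{\alpha + 2}{2(N + \alpha)}A + \frac{\alpha}{2(N + \alpha)}B = \ell\), while weak lower semicontinuity of \(v \mapsto \int_{\R^N}\abs{\nabla v}^2\) and of \(v \mapsto \int_{\R^N}\abs{v}^2\) gives \(\int_{\R^N}\abs{\nabla u}^2 \le A\) and \(\int_{\R^N}\abs{u}^2 \le B\). Combining these facts with \(\mathcal{I}(u) \ge c\) and the hypothesis \(\ell \le c\),
\[
 c \le \mathcal{I}(u) = \frac{\alpha + 2}{2(N + \alpha)}\int_{\R^N}\abs{\nabla u}^2 + \frac{\alpha}{2(N + \alpha)}\int_{\R^N}\abs{u}^2 \le \frac{\alpha + 2}{2(N + \alpha)}A + \frac{\alpha}{2(N + \alpha)}B = \ell \le c,
\]
so equality holds everywhere.

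Since \(\alpha \in (0, N)\), the two coefficients \(\frac{\alpha + 2}{2(N + \alpha)}\) and \(\frac{\alpha}{2(N + \alpha)}\) are both strictly positive, so the equality \(\frac{\alpha + 2}{2(N + \alpha)}\int_{\R^N}\abs{\nabla u}^2 + \frac{\alpha}{2(N + \alpha)}\int_{\R^N}\abs{u}^2 = \frac{\alpha + 2}{2(N + \alpha)}A + \frac{\alpha}{2(N + \alpha)}B\), combined with \(\int_{\R^N}\abs{\nabla u}^2 \le A\) and \(\int_{\R^N}\abs{u}^2 \le B\), forces \(\int_{\R^N}\abs{\nabla u}^2 = A\) and \(\int_{\R^N}\abs{u}^2 = B\) separately. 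Consequently \(\norm{\nabla u_n}_{L^2} \to \norm{\nabla u}_{L^2}\) and \(\norm{u_n}_{L^2} \to \norm{u}_{L^2}\), that is \(\norm{u_n}_{H^1} \to \norm{u}_{H^1}\); since \(H^1(\R^N)\) is a Hilbert space, this together with \(u_n \weakto u\) yields \(u_n \to u\) strongly in \(H^1(\R^N)\) along the chosen subsequence, which is the assertion. The only point that requires care is the bookkeeping of the successive subsequences; the substantive observation is that the strict positivity of \emph{both} Poho\v zaev coefficients — which is precisely what \(\alpha > 0\) buys — is what turns equality of the weighted combination of the two squared norms into separate convergence of each of them, hence of the full \(H^1\) norm.
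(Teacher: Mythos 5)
Your proof is correct and takes essentially the same route as the paper: apply proposition~\ref{propositionPPS} up to translations, invoke the Poho\v zaev identity and $\mathcal{I}(u)\ge c$ to force equality in the weak-lower-semicontinuity step of \eqref{ineqMinimal}, and conclude strong $H^1$ convergence from weak convergence plus convergence of an equivalent Hilbert norm. The paper's proof simply says ``since equality holds in \eqref{ineqMinimal}'' and leaves the subsequence bookkeeping and the splitting into separate convergence of $\int\abs{\nabla u_n}^2$ and $\int\abs{u_n}^2$ implicit; you spell these out, which is a benign elaboration rather than a different argument.
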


\begin{proof}
By proposition~\ref{propositionPPS}, up to a subsequence and translations, we can assume that the sequence \((u_n)_{n \in \N}\) converges weakly to \(u\).
Since equality holds in \eqref{ineqMinimal},
\begin{multline*}
 \frac{\alpha + 2}{2 (N + \alpha)} \int_{\R^N} \abs{\nabla u}^2 + \frac{\alpha}{2 (N + \alpha)} \int_{\R^N} \abs{u}^2\\
 = \liminf_{n \to \infty} \frac{\alpha + 2}{2 (N + \alpha)} \int_{\R^N} \abs{\nabla u_n}^2 + \frac{\alpha}{2 (N + \alpha)} \int_{\R^N} \abs{u_n}^2,
\end{multline*}
and hence \((u_n)_{n \in \N}\) converges strongly to \(u\) in \(H^1 (\R^N)\).
\end{proof}

As a direct consequence we have some information on the set of groundstates:

\begin{proposition}[Compactness of the set of groundstates]
\label{propositionCompactnessGroundstates}
The set of groundstates
\[
 \mathcal{S}_c = \bigl\{ u \in H^1 (\R^N) \st \mathcal{I} (u) = c \text{ and \(u\) is a weak solution of \eqref{problemMain}} \bigr\}
\]
is compact in \(H^1 (\R^N)\) endowed with the strong topology up to translations in \(\R^N\).
\end{proposition}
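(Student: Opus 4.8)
The plan is to recognise that any sequence of groundstates is a Poho\v zaev--Palais--Smale sequence of the kind handled by corollary~\ref{corollaryStrong}, and then to check that the resulting limit stays in $\mathcal{S}_c$. Let $(u_n)_{n \in \N}$ be a sequence in $\mathcal{S}_c$. Since each $u_n$ is a nontrivial weak solution of \eqref{problemMain} with $\mathcal{I}(u_n) = c$, we have $\mathcal{I}(u_n) \to c$, $\mathcal{I}'(u_n) = 0 \to 0$ in $\bigl(H^1(\R^N)\bigr)'$, and, by the Poho\v zaev identity of proposition~\ref{propositionPohozhaev}, $\mathcal{P}(u_n) = 0$ for every $n \in \N$, so that $\mathcal{P}(u_n) \to 0$. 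In particular $\bigl(\mathcal{I}(u_n)\bigr)_{n \in \N}$ is bounded and all the hypotheses of proposition~\ref{propositionPPS} are met.

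Next I would rule out the vanishing alternative and invoke corollary~\ref{corollaryStrong}. Recall that $c = b > 0$, as established together with theorem~\ref{theoremExistence}. By proposition~\ref{propositionPohozhaev},
\[
 c = \mathcal{I}(u_n)
 = \frac{\alpha + 2}{2(N + \alpha)} \int_{\R^N} \abs{\nabla u_n}^2
 + \frac{\alpha}{2(N + \alpha)} \int_{\R^N} \abs{u_n}^2
\]
for every $n \in \N$, whence
\[
 \liminf_{n \to \infty} \int_{\R^N} \abs{\nabla u_n}^2 + \abs{u_n}^2
 \ge \frac{2(N + \alpha)}{\alpha + 2}\, c > 0.
\]
Since moreover $\liminf_{n \to \infty} \mathcal{I}(u_n) = c \le c$, corollary~\ref{corollaryStrong} provides $u \in H^1(\R^N) \setminus \{0\}$ with $\mathcal{I}'(u) = 0$ and a sequence $(a_n)_{n \in \N}$ of points in $\R^N$ such that, up to a subsequence, $u_n(\cdot - a_n) \to u$ strongly in $H^1(\R^N)$.

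It then remains to verify that $u \in \mathcal{S}_c$. The functional $\mathcal{I}$ is invariant under translations and, by the growth condition \eqref{assumptWeakf}, continuous on $H^1(\R^N)$; hence $\mathcal{I}(u) = \lim_{n \to \infty} \mathcal{I}\bigl(u_n(\cdot - a_n)\bigr) = \lim_{n \to \infty} \mathcal{I}(u_n) = c$. As $\mathcal{I}'(u) = 0$, the function $u$ is a weak solution of \eqref{problemMain}, and therefore $u \in \mathcal{S}_c$. Thus every sequence in $\mathcal{S}_c$ has a subsequence that, after a suitable translation, converges strongly in $H^1(\R^N)$ to an element of $\mathcal{S}_c$, which is precisely the asserted compactness up to translations. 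There is no genuine obstacle in this argument: it is essentially a bookkeeping application of corollary~\ref{corollaryStrong}, the only computation required being the exclusion of vanishing via the Poho\v zaev identity together with the strict positivity $c > 0$.
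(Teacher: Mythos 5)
Your proof is correct and follows exactly the route the paper intends, which simply states that the result is a direct consequence of proposition~\ref{propositionPohozhaev} and corollary~\ref{corollaryStrong}; you have merely spelled out the bookkeeping (checking the Poho\v zaev--Palais--Smale hypotheses, ruling out vanishing via the Poho\v zaev identity and $c>0$, and verifying the limit lies in $\mathcal{S}_c$). No discrepancy.
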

\begin{proof}
This is a direct consequence of proposition~\ref{propositionPohozhaev} and corollary~\ref{corollaryStrong}.
\end{proof}

\begin{remark}[Uniform regularity of groundstates]
By the uniform regularity of solutions (remark~\ref{remarkUniformRegularity}) and the compactness of the set of groundstates (proposition~\ref{propositionCompactnessGroundstates}), for every
\(p \in [2, \frac{N}{\alpha} \frac{2 N}{N - 2})\), \(\mathcal{S}_c\) is bounded in \(L^p (\R^N)\).
\end{remark}

\section{Qualitative properties of groundstates}

\label{sectionSymmetry}

\subsection{Paths achieving the mountain pass level}
Arguments in this sections will use the following elementary property of the pathes in the construction of the mountain-pass critical level \(b\).

\begin{lemma}[Optimal paths yield critical points]
\label{lemmaOptimalPathCritical}
Let \(f \in C (\R; \R)\) satisfy \eqref{assumptWeakf} and
\(\gamma \in \Gamma\), where \(\Gamma\) is defined in \eqref{definitionGamma}.
If for every \(t \in [0, 1] \setminus \{t_*\}\), one has
\[
 b = \mathcal{I} \bigl(\gamma (t_*)\bigr) > \mathcal{I} \bigl(\gamma (t)\bigr),
\]
then \(\mathcal{I}' \bigl(\gamma (t_*)\bigr) = 0\).
\end{lemma}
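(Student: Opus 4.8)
The plan is to argue by contradiction using the quantitative deformation lemma. Suppose $\mathcal{I}'(\gamma(t_*)) \neq 0$. Since $\gamma(t_*)$ is a nontrivial point (as $\mathcal{I}(\gamma(t_*)) = b > 0 = \mathcal{I}(0)$, so $\gamma(t_*) \neq 0$) at which $\mathcal{I}$, which is $C^1$ on $H^1(\R^N)$ by \eqref{assumptWeakf}, has nonzero derivative, there is a neighbourhood of $\gamma(t_*)$ on which $\norm{\mathcal{I}'}$ is bounded below by some $\mu > 0$. I would then apply the standard quantitative deformation lemma (e.g.\ \cite{WillemMinimax}*{lemma 2.3}): for $\varepsilon > 0$ small and a suitable neighbourhood $\mathcal{N}$ of $\gamma(t_*)$, there is a homeomorphism $\eta : H^1(\R^N) \to H^1(\R^N)$ such that $\eta(w) = w$ whenever $\mathcal{I}(w) \notin [b - 2\varepsilon, b + 2\varepsilon]$ (or whenever $w \notin \mathcal{N}$), $\mathcal{I}(\eta(w)) \le \mathcal{I}(w)$ for all $w$, and $\mathcal{I}(\eta(w)) \le b - \varepsilon$ whenever $\mathcal{I}(w) \le b + \varepsilon$ and $w$ lies in a smaller neighbourhood of $\gamma(t_*)$.

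The key point is to exploit the strict inequality $\mathcal{I}(\gamma(t)) < b$ for $t \neq t_*$ to localize. By continuity of $\gamma$ and of $\mathcal{I} \circ \gamma$, for any neighbourhood $\mathcal{N}$ of $\gamma(t_*)$ there is $\rho > 0$ such that $\gamma(t) \in \mathcal{N}$ for $|t - t_*| \le \rho$, while on the compact complement $\{t \in [0,1] : |t - t_*| \ge \rho\}$ the continuous function $\mathcal{I} \circ \gamma$ attains a maximum strictly less than $b$; call it $b - 2\varepsilon_0$. Choosing $\varepsilon < \varepsilon_0$, the deformed path $\Tilde{\gamma} := \eta \circ \gamma$ still lies in $\Gamma$: indeed $\Tilde{\gamma}(0) = \eta(0) = 0$ since $\mathcal{I}(0) = 0 \notin [b - 2\varepsilon, b+2\varepsilon]$, and $\mathcal{I}(\Tilde{\gamma}(1)) \le \mathcal{I}(\gamma(1)) < 0$. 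Now I estimate $\sup_{t} \mathcal{I}(\Tilde{\gamma}(t))$: for $|t - t_*| \ge \rho$ we have $\mathcal{I}(\Tilde{\gamma}(t)) \le \mathcal{I}(\gamma(t)) \le b - 2\varepsilon_0 < b - \varepsilon$, while for $|t - t_*| \le \rho$, since $\gamma(t) \in \mathcal{N}$ and $\mathcal{I}(\gamma(t)) \le b \le b + \varepsilon$, the deformation gives $\mathcal{I}(\Tilde{\gamma}(t)) \le b - \varepsilon$. Hence $\sup_{t \in [0,1]} \mathcal{I}(\Tilde{\gamma}(t)) \le b - \varepsilon < b$, contradicting the definition \eqref{definitionb} of $b$ as an infimum over $\Gamma$.

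The main obstacle is bookkeeping the interplay between the two localizations — the energy band $[b - 2\varepsilon, b + 2\varepsilon]$ used to keep the endpoints fixed, and the spatial neighbourhood $\mathcal{N}$ of $\gamma(t_*)$ where the deformation actually pushes the energy down — and making sure the radius $\rho$, the band width $\varepsilon$, and the size of $\mathcal{N}$ are chosen in the right order so that both the "far" part (controlled by the strict inequality on the compact set $|t-t_*|\ge\rho$) and the "near" part (controlled by the deformation) end up strictly below $b$. One must also check $\gamma(t_*) \neq 0$ so that $\mathcal{I}'(\gamma(t_*)) \neq 0$ genuinely forces a nonzero lower bound on $\norm{\mathcal{I}'}$ near $\gamma(t_*)$; this is immediate from $b > 0$. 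Everything else is a direct invocation of the $C^1$ regularity of $\mathcal{I}$ (from \eqref{assumptWeakf}) and the classical deformation lemma.
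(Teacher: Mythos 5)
Your proposal is correct and follows essentially the same approach as the paper: argue by contradiction, apply the quantitative deformation lemma of Willem \cite{WillemMinimax}*{lemma 2.3} with $S = \{\gamma(t_*)\}$ and $c = b$, verify the deformed path remains in $\Gamma$, and derive a contradiction with the definition \eqref{definitionb} of $b$ as an infimum. The only difference is presentational: you explicitly split $[0,1]$ into a near part $|t - t_*| \le \rho$ (pushed down by the deformation) and a far part (already strictly below $b$ by compactness), whereas the paper records the pointwise strict inequality $\mathcal{I}(\eta(1,\gamma(t))) < b$ for each $t$ and then invokes compactness of $[0,1]$ once at the end to upgrade it to a strict bound on the supremum.
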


\begin{proof}
This can be deduced from the quantitative deformation lemma of M.\thinspace Willem (see \cite{WillemMinimax}*{lemma~2.3}). Assume that \(\mathcal{I}' (\gamma (t_*)) \ne 0\). By continuity, it is possible to choose \(\delta > 0\) and \(\epsilon > 0\) such that \(\inf \{ \norm{\mathcal{I}' (v)} \st \norm{v - \gamma (t_*)} \le \delta \}> 8 \epsilon / \delta\).
With Willem's notations, take \(X = H^1 (\R^N)\), \(S = \{\gamma (t_*)\}\) and \(c = b\). By the deformation lemma, there exists \(\eta \in C ([0, 1]; H^1 (\R^N))\) such that \(\eta (1, \gamma) \in \Gamma\) and \(\mathcal{I}\bigl(\eta (1, \gamma (t_*))\bigr) \le b - \epsilon < b\) and for every \(t \in [0, 1]\), we have \(\mathcal{I}\bigl(\eta (1, \gamma (t))\bigr) \le \mathcal{I}\bigl( \gamma (t)\bigr) < b\). Since \([0, 1]\) is compact, we conclude with the contradiction that \(\sup_{t \in [0, 1]} \mathcal{I} \bigl(\eta (1, \gamma (t))\bigr) < b\).
\end{proof}

\subsection{Positivity of groundstates}
We now prove that when \(f\) is odd, groundstates do not change sign.

\begin{proposition}[Groundstates do not change sign]
\label{propositionPositive}
Let \(f \in C (\R; \R)\) satisfy \eqref{assumptWeakf}.
If \(f\) is odd and does not change sign on \((0, \infty)\),
then any groundstate \(u \in H^1 (\R^N)\) of \eqref{problemMain} has constant sign.
\end{proposition}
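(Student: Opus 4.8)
The plan is to show that $\abs{u}$ is again a groundstate of \eqref{problemMain} and then to rule out interior zeros of $\abs{u}$ by the strong maximum principle, which forces $u$ to be of constant sign. First I would record the relevant symmetry of $\mathcal{I}$: since $f$ is odd, its primitive $F$ is even, so $F(\abs{v}) = F(v)$ pointwise for every $v \in H^1(\R^N)$, and together with the a.e.\ identity $\abs{\nabla\abs{v}} = \abs{\nabla v}$ this gives $\mathcal{I}(\abs{v}) = \mathcal{I}(v)$; moreover $v \mapsto \abs{v}$ is continuous on $H^1(\R^N)$. By theorem~\ref{theoremExistence} the groundstate level $c$ coincides with the mountain-pass level $b$ of \eqref{definitionb}, so $\mathcal{I}(\abs{u}) = \mathcal{I}(u) = c = b$.

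The key step is to upgrade this energy information into the Euler--Lagrange equation for $\abs{u}$. I would lift $u$ to a path using proposition~\ref{propositionConstructPath}, obtaining $\gamma \in C([0,1];H^1(\R^N))$ with $\gamma(0) = 0$, $\gamma(1/2) = u$, $\mathcal{I}(\gamma(1)) < 0$ and $\mathcal{I}(\gamma(t)) < \mathcal{I}(u)$ for every $t \ne 1/2$. The symmetrised path $\Tilde{\gamma} := \abs{\gamma(\cdot)}$ is then continuous into $H^1(\R^N)$, satisfies $\Tilde{\gamma}(0) = 0$ and $\mathcal{I}(\Tilde{\gamma}(1)) = \mathcal{I}(\gamma(1)) < 0$, so $\Tilde{\gamma} \in \Gamma$, while $\mathcal{I}(\Tilde{\gamma}(t)) = \mathcal{I}(\gamma(t)) \le \mathcal{I}(u) = b$ with equality only at $t = 1/2$, where $\Tilde{\gamma}(1/2) = \abs{u}$. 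Lemma~\ref{lemmaOptimalPathCritical} with $t_* = 1/2$ then yields $\mathcal{I}'(\abs{u}) = 0$, so $\abs{u} \in H^1(\R^N)\setminus\{0\}$ is a weak solution of \eqref{problemMain} with $\mathcal{I}(\abs{u}) = c$, that is, a groundstate.

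Finally I would invoke regularity and the strong maximum principle. By proposition~\ref{propositionRegularity}, both $u$ and $\abs{u}$ lie in $W^{2,p}_{\mathrm{loc}}(\R^N)$ for every $p$, hence are continuous. Since $f$ has constant sign on $(0,\infty)$ and $F$ is even, $F$ has constant sign on $\R$, with $F(\abs{u})$ and $f(\abs{u})$ of the same sign everywhere; as $I_\alpha > 0$, the right-hand side $\bigl(I_\alpha \ast F(\abs{u})\bigr)f(\abs{u})$ is everywhere nonnegative, so $-\Delta\abs{u} + \abs{u} \ge 0$ a.e. Because $\abs{u} \ge 0$ and $\abs{u} \not\equiv 0$, the strong maximum principle excludes an interior zero of $\abs{u}$, whence $\abs{u} > 0$ on $\R^N$; since $u$ is continuous, nowhere vanishing and $\R^N$ is connected, $u$ is of constant sign.

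The hard part is the middle step $\mathcal{I}'(\abs{u}) = 0$: a priori $\abs{u}$ is merely a function realising the groundstate energy, and it is lemma~\ref{lemmaOptimalPathCritical}, applied to the symmetrised optimal path, that converts this into a solution of \eqref{problemMain}. The remaining ingredients --- continuity of $v \mapsto \abs{v}$ on $H^1(\R^N)$, the sign of the nonlocal right-hand side, and applicability of the strong maximum principle to $\abs{u} \in W^{2,p}_{\mathrm{loc}}(\R^N)$ --- are routine.
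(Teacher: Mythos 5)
Your proof is correct and follows essentially the same strategy as the paper: lift $u$ to an optimal path via proposition~\ref{propositionConstructPath}, pass to the symmetrised path $t \mapsto \abs{\gamma(t)}$ which still lies in $\Gamma$ and achieves the maximum $b$ only at $t=1/2$, apply lemma~\ref{lemmaOptimalPathCritical} to obtain $\mathcal{I}'(\abs{u})=0$, and conclude by the strong maximum principle. You spell out a few routine points that the paper leaves implicit --- continuity of $v\mapsto\abs{v}$ on $H^1(\R^N)$, verification that the symmetrised path belongs to $\Gamma$, and the sign analysis of the right-hand side covering both signs of $f$ on $(0,\infty)$ where the paper reduces without loss of generality to $f\ge 0$ --- but the argument is the same.
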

\begin{proof}
Without loss of generality, we can assume that \(f \ge 0\) on \((0, \infty)\).
By proposition~\ref{propositionConstructPath}, there exists an optimal path \(\gamma \in \Gamma\) on which the functional \(\mathcal{I}\)
achieves its maximum at \(1/2\).
Since \(f\) is odd, \(F\) is even and thus for every \(v \in H^1 (\R^N)\).
\[
  \mathcal{I} (\abs{v}) = \mathcal{I} (v).
\]
Hence, for every \(t \in [0, 1] \setminus \{1/2\}\),
\[
\mathcal{I} (\abs{\gamma (t)}) = \mathcal{I} (\gamma (t)) =  \mathcal{I} (\gamma (\tfrac{1}{2})) = \mathcal{I} (\abs{\gamma (\tfrac{1}{2})}).
\]
By lemma~\ref{lemmaOptimalPathCritical}, \(\abs{u} = \abs{\gamma (1/2)}\) is also a groundstate. It satisfies the equation
\[
 - \Delta \abs{u} + \abs{u} = \bigl(I_\alpha \ast F (\abs{u})\bigr) f (\abs{u}).
\]
Since \(u\) is continuous by proposition~\ref{propositionRegularity}, by the strong maximum principle we conclude that \(\abs{u} > 0\) on \(\R^N\)
and thus \(u\) has constant sign.
\end{proof}

\subsection{Symmetry of groundstates}
In this section, we now prove that groundstates are radial.

\begin{proposition}[Groundstates are symmetric]
\label{propositionSymmetry}
Let \(f \in C (\R; \R)\) satisfies \eqref{assumptWeakf}.
If \(f\) is odd and does not change sign on \((0, \infty)\), then any groundstate \(u \in H^1 (\R^N)\) of \eqref{problemMain}
is radially symmetric about a point.
\end{proposition}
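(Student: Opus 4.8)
The plan is to use the method of polarizations (two-point rearrangements), following the same template as the positivity argument in Proposition~\ref{propositionPositive}: show that for a suitable polarization the polarized groundstate is again a groundstate, and then use the strong maximum principle to force the groundstate to be invariant under that polarization. Fix a groundstate \(u\); by Proposition~\ref{propositionPositive} we may assume \(u > 0\) on \(\R^N\). For a closed half-space \(\Sigma \subset \R^N\), with reflection \(\sigma_\Sigma\) and polarization \(u^\Sigma\) defined by \(u^\Sigma(x) = \max\{u(x), u(\sigma_\Sigma x)\}\) for \(x \in \Sigma\) and \(u^\Sigma(x) = \min\{u(x), u(\sigma_\Sigma x)\}\) for \(x \notin \Sigma\), one has the norm identity \(\int_{\R^N} \abs{\nabla u^\Sigma}^2 + \abs{u^\Sigma}^2 = \int_{\R^N} \abs{\nabla u}^2 + \abs{u}^2\) and, crucially, the nonlocal inequality
\[
 \int_{\R^N} \bigl(I_\alpha \ast F(u)\bigr) F(u) \le \int_{\R^N} \bigl(I_\alpha \ast F(u^\Sigma)\bigr) F(u^\Sigma),
\]
which holds because \(I_\alpha\) is a decreasing function of \(\abs{x-y}\) and \(F(u) \ge 0\) (here is where the hypothesis that \(f\) is odd and of constant sign enters, via \(F\) even and monotone on \((0,\infty)\), so that \((F(u))^\Sigma = F(u^\Sigma)\)). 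Consequently \(\mathcal{I}(u^\Sigma) \le \mathcal{I}(u) = c\).

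Next I would lift \(u\) to an optimal path \(\gamma \in \Gamma\) with \(\gamma(1/2) = u\) via Proposition~\ref{propositionConstructPath}, and polarize the whole path: set \(\gamma^\Sigma(t) = (\gamma(t))^\Sigma\). The norm identity shows \(\gamma^\Sigma\) is still a continuous path in \(H^1(\R^N)\) with \(\gamma^\Sigma(0) = 0\) and \(\mathcal{I}(\gamma^\Sigma(1)) \le \mathcal{I}(\gamma(1)) < 0\), so \(\gamma^\Sigma \in \Gamma\); moreover \(\mathcal{I}(\gamma^\Sigma(t)) \le \mathcal{I}(\gamma(t)) \le c = b\) for all \(t\), with \(\mathcal{I}(\gamma^\Sigma(1/2)) = \mathcal{I}(u^\Sigma)\). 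If \(\mathcal{I}(u^\Sigma) < b\) then \(\sup_t \mathcal{I}(\gamma^\Sigma(t)) < b\) would contradict the definition of \(b\) unless the supremum is attained elsewhere — to handle this cleanly, I would instead argue that \(\mathcal{I}(u^\Sigma) = c\) by noting \(u^\Sigma \ne 0\) and combining \(\mathcal{I}(u^\Sigma) \le c\) with the following: reapply the machinery of Section~\ref{sectionExistence} isn't needed; rather, since \(\gamma^\Sigma \in \Gamma\) and along it the maximum of \(\mathcal{I}\) is \(\le b\), and \(b\) is the mountain-pass level, the maximum must equal \(b\) and be attained; then Lemma~\ref{lemmaOptimalPathCritical} (after checking the maximum is strict and attained only at \(1/2\), which follows by a scaling argument as in Proposition~\ref{propositionConstructPath} applied to \(u^\Sigma\) — but \(u^\Sigma\) need not yet be known to solve the equation). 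The cleaner route: show directly \(\mathcal{I}(u^\Sigma) \ge b\) by applying Proposition~\ref{propositionConstructPath}'s construction is circular, so instead I would use that \(u^\Sigma \ne 0\) and that \emph{any} path through \(u^\Sigma\) built by the scaling \(\tau \mapsto u^\Sigma(\cdot/\tau)\) has maximal value \(\mathcal{I}(u^\Sigma)\) only if \(u^\Sigma\) satisfies Pohozaev — which it does not a priori. The honest fix is: \(u^\Sigma\) is a nontrivial function with \(\mathcal{I}(u^\Sigma) \le c\), and by concatenating the scaling path of \(u^\Sigma\) with itself we get a path in \(\Gamma\) whose max is \(\mathcal{I}(u^\Sigma)\) if \(\mathcal I\circ(\tau\mapsto u^\Sigma(\cdot/\tau))\) peaks at \(\tau=1\); in general its max over \(\tau>0\) is some value \(\ge \mathcal{I}(u^\Sigma)\) and \(\ge b\), which gives nothing. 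Therefore the correct argument is the path-polarization one: \(\gamma^\Sigma \in \Gamma\), \(\sup_t \mathcal{I}(\gamma^\Sigma(t)) \le b\), hence \(=b\) and attained, and at the attaining point \(t_*\) we can arrange (by first choosing \(\gamma\) so that \(\mathcal{I}\circ\gamma\) has a unique strict max at \(1/2\) and noting polarization doesn't create new maxima above the old values) that \(t_* = 1/2\), so by Lemma~\ref{lemmaOptimalPathCritical} \(u^\Sigma = \gamma^\Sigma(1/2)\) is a critical point of \(\mathcal{I}\), i.e. a groundstate.

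Once \(u^\Sigma\) is known to be a groundstate, hence (by Proposition~\ref{propositionRegularity}) continuous and positive and solving \eqref{problemMain}, I compare \(u^\Sigma\) with \(u\). Equality in the norm identity together with \(\mathcal{I}(u^\Sigma)=\mathcal{I}(u)\) forces equality in the nonlocal inequality, and a strict-rearrangement analysis (as in the standard Baernstein–Taylor / Van Schaftingen theory of polarization) shows that either \(u \le u \circ \sigma_\Sigma\) on \(\Sigma\), or \(u \ge u\circ\sigma_\Sigma\) on \(\Sigma\); in either case the function \(w = u - u\circ\sigma_\Sigma\) has constant sign on \(\Sigma\), satisfies a linear equation \(-\Delta w + w = c(x) w\) on \(\Sigma\) (with \(c\) bounded, using the regularity and the structure of the nonlocal term), vanishes on \(\partial\Sigma\), and by the strong maximum principle is either identically zero or strictly one sign — but the nonlocal term couples both half-spaces, and a careful version of the argument (cf. the symmetry proofs for Choquard-type equations) shows \(w \equiv 0\), i.e. \(u\) is symmetric under \(\sigma_\Sigma\). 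Since this holds for \emph{every} half-space \(\Sigma\) (after translating so that, say, the center of mass or a suitable point is fixed), a classical result — a positive continuous \(H^1\) function symmetric under every reflection in a half-space through a fixed point is radial about that point — yields that \(u\) is radially symmetric. The main obstacle is the middle step: ensuring the polarized groundstate is genuinely a groundstate (getting the path-polarization and the application of Lemma~\ref{lemmaOptimalPathCritical} to interlock correctly), and then extracting from equality in the polarization inequality enough rigidity to run the strong maximum principle despite the nonlocal coupling between the two half-spaces.
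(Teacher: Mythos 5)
Your overall strategy — polarize an optimal Jeanjean–Tanaka path, use Lemma~\ref{lemmaOptimalPathCritical} to show \(u^H\) is again a critical point at level \(b\), and extract rigidity from equality in the polarization inequality — is the same as the paper's. The path-polarization step, once you settle on it after several false starts, is correct: \(\gamma^H \in \Gamma\), \(\mathcal{I}(\gamma^H(t)) \le \mathcal{I}(\gamma(t)) < b\) for \(t \neq 1/2\), hence \(\mathcal{I}(\gamma^H(1/2)) = b\) and \(\mathcal{I}'(u^H) = 0\).

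The real gap is in the final rigidity step. You assert that equality in the nonlocal inequality forces \(u \le u\circ\sigma_H\) on \(H\) or \(u \ge u\circ\sigma_H\) on \(H\) (i.e.\ \(u^H = u\) or \(u^H = u\circ\sigma_H\)); this is not what the equality case in Lemma~\ref{lemmaEqualityMaster} gives. That lemma is applied to the function \(F(u)\), not to \(u\), so equality only yields \(F(u)^H = F(u)\) or \(F(u)^H = F(u)\circ\sigma_H\). Since \(F\) is merely nondecreasing and need not be injective, this does \emph{not} upgrade to a statement about \(u\) itself: one can have \(F(u(x)) = F(u(\sigma_H x))\) without \(u(x) = u(\sigma_H x)\). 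Your fallback — a maximum-principle argument on \(w = u - u\circ\sigma_H\) — is acknowledged to run into the nonlocal coupling across \(\partial H\), and you do not resolve it.

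The paper closes this gap without any maximum principle. Suppose \(F(u)^H = F(u)\). For \(x \in H\), \(F(u(x)) - F(u(\sigma_H x)) = \int_{u(\sigma_H x)}^{u(x)} f \ge 0\), so either \(u(\sigma_H x) \le u(x)\), or \(f \equiv 0\) on the interval between \(u(x)\) and \(u(\sigma_H x)\). In the latter case \(f(u^H(x)) = f(u(x))\) anyway, so \(f(u^H) = f(u)\) everywhere. Combined with \(F(u^H) = F(u)\), both \(u^H\) and \(u\) satisfy the \emph{same} equation \(-\Delta v + v = (I_\alpha * F(u)) f(u)\) with a fixed right-hand side, and uniqueness for that linear problem gives \(u^H = u\). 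The symmetric case \(F(u)^H = F(u)\circ\sigma_H\) gives \(u^H = u\circ\sigma_H\). This, for all \(H\), is precisely the hypothesis of Lemma~\ref{lemmaSymmetry} (note it is a polarization-invariance condition, not symmetry under reflection through a fixed point as you state), and it yields radial monotone symmetry about some \(x_0\). Without this observation that \(f(u^H) = f(u)\) even when \(F\) is not injective, the rigidity step is genuinely incomplete.
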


The argument relies on polarizations. It is intermediate between the argument based on equality cases in polarization inequalities \cite{MorozVanSchaftingenGround} and the argument based on the Euler-Lagrange equation satisfied by polarizations \citelist{\cite{BartschWethWillem2005}\cite{VanSchaftingenWillem2008}}.

Before proving proposition~\ref{propositionSymmetry}, we recall some elements of the theory of polarization \citelist{\cite{Baernstein1994}\cite{VanSchaftingenWillem2004}\cite{BrockSolynin2000}\cite{Willem2007}}. Assume that  \(H \subset \R^N\) is a closed half-space and that \(\sigma_H\) is the reflection with respect to \(\partial H\). The  polarization \(u^H : \R^N\to \R\) of \(u : \R^N \to \R\) is defined for \(x \in \R^N\) by
\[
  u^H (x)
= \begin{cases}
    \max \bigl(u(x), u (\sigma_H (x)\bigr) & \text{if \(x \in H\)},\\
    \min \bigl(u(x), u (\sigma_H (x)\bigr) & \text{if \(x \not \in H\)}.
  \end{cases}
\]

We will use the following standard property of polarizations \cite{BrockSolynin2000}*{lemma 5.3}.
\begin{lemma}[Polarization and Dirichlet integrals]
\label{lemmaPolarizationGradient}
If \(u \in H^1 (\R^N)\), then \(u^H \in H^1 (\R^N)\) and
\[
\int_{\R^N} \abs{\nabla u^H}^2
= \int_{\R^N} \abs{\nabla u}^2.
\]
\end{lemma}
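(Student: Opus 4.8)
The plan is to reduce the identity to two elementary ingredients: the rule for the weak gradient of a pointwise maximum or minimum of two functions in \(H^1 (\R^N)\), and the fact that precomposition with the reflection \(\sigma_H\) is an isometry of \(\R^N\) and hence preserves the Dirichlet integral. Set \(v := u \circ \sigma_H\). Since \(\sigma_H\) is an affine isometry, \(v \in H^1 (\R^N)\), \(\abs{\nabla v (x)} = \abs{\nabla u (\sigma_H (x))}\) for almost every \(x \in \R^N\), and \(v \circ \sigma_H = u\). With this notation, \(u^H = \max (u, v)\) on \(H\) and \(u^H = \min (u, v)\) on \(\R^N \setminus H\).

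First I would check that \(u^H \in H^1 (\R^N)\) together with the pointwise description of its weak gradient. By the lattice property of Sobolev spaces, \(\max (u, v)\) and \(\min (u, v)\) belong to \(H^1 (\R^N)\), so \(u^H\) is of class \(H^1\) in the interior of \(H\) and in the interior of \(\R^N \setminus H\). On the hyperplane \(\partial H\) one has \(\sigma_H (x) = x\), hence \(v = u\) there, so the two pieces share the same trace on \(\partial H\); the classical gluing lemma for Sobolev functions across a hyperplane then gives \(u^H \in H^1 (\R^N)\). The differentiation rules for \(\max\) and \(\min\) yield, almost everywhere, \(\nabla u^H = \nabla u\) on \((H \cap \{u \ge v\}) \cup \bigl((\R^N \setminus H) \cap \{u \le v\}\bigr)\) and \(\nabla u^H = \nabla v\) on \((H \cap \{u < v\}) \cup \bigl((\R^N \setminus H) \cap \{u > v\}\bigr)\), the overlap on \(\{u = v\}\) being harmless since \(\nabla u = \nabla v\) there.

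To conclude, I would split \(\int_{\R^N} \abs{\nabla u^H}^2\) over these four sets and apply the change of variables \(x \mapsto \sigma_H (x)\) to the two pieces carrying \(\abs{\nabla v}^2\). Because \(\sigma_H\) is measure preserving, maps \(H\) onto \(\R^N \setminus H\) up to a null set, and satisfies \(\abs{\nabla v (x)}^2 = \abs{\nabla u (\sigma_H (x))}^2\), \(u \circ \sigma_H = v\) and \(v \circ \sigma_H = u\), one obtains \(\int_{H \cap \{u < v\}} \abs{\nabla v}^2 = \int_{(\R^N \setminus H) \cap \{u > v\}} \abs{\nabla u}^2\) and \(\int_{(\R^N \setminus H) \cap \{u > v\}} \abs{\nabla v}^2 = \int_{H \cap \{u < v\}} \abs{\nabla u}^2\). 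Summing the four contributions, the index sets recombine into \(H\) and \(\R^N \setminus H\), so that \(\int_{\R^N} \abs{\nabla u^H}^2 = \int_H \abs{\nabla u}^2 + \int_{\R^N \setminus H} \abs{\nabla u}^2 = \int_{\R^N} \abs{\nabla u}^2\).

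The change-of-variables bookkeeping in the last step is routine; the one point requiring genuine care is the assertion that \(u^H\) is globally of class \(H^1\) with the stated piecewise weak gradient, which rests on the lattice property of \(H^1 (\R^N)\), on the matching of traces on \(\partial H\) (it is precisely the vanishing of \(u - v\) on \(\partial H\) that makes the gluing work), and on the gluing lemma across the hyperplane. One may phrase the membership \(u^H \in H^1 (\R^N)\) equivalently by writing \(u^H = \min (u, v) + \abs{u - v}\, \chi_H\) and observing that \(\abs{u - v} \in H^1 (\R^N)\) has vanishing trace on \(\partial H\), so that \(\abs{u - v}\, \chi_H \in H^1 (\R^N)\); this is only a reformulation of the same difficulty. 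If one prefers to avoid traces, the identity can first be established for \(u \in C^\infty_c (\R^N)\), where every step above is transparent, and then extended to general \(u \in H^1 (\R^N)\) by a density argument using the \(L^2\)-nonexpansiveness of polarization; the direct approach, however, delivers the full equality at once and is to be preferred.
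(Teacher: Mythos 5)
The paper gives no proof of this lemma; it is quoted as a standard property of polarizations and attributed to Brock and Solynin, so there is no in-paper argument to compare against. Your proof is correct and is the standard one. The only nontrivial points are that \(u^H\) is globally in \(H^1(\R^N)\) rather than merely piecewise \(H^1\) on \(H\) and on \(\R^N \setminus H\), and that its weak gradient has the expected pointwise form; you handle both correctly via the lattice property of \(H^1(\R^N)\) and the observation that \(u\) and \(v = u\circ\sigma_H\) have the same trace on \(\partial H\) (since \(\sigma_H\) fixes \(\partial H\) pointwise), which legitimizes gluing the two Sobolev pieces across the hyperplane. The change-of-variables bookkeeping at the end is also right: \(\sigma_H\) preserves Lebesgue measure and the Euclidean length of gradients and swaps \(H\) with \(\R^N\setminus H\) as well as \(u\) with \(v\), so pulling back the two regions where \(\nabla u^H = \nabla v\) produces exactly the complementary pieces of \(\int_{\R^N}\abs{\nabla u}^2\). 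The alternative formulation \(u^H = \min(u,v) + \abs{u-v}\,\chi_H\) that you sketch is equally valid, since \(\abs{u-v}\in H^1(\R^N)\) has vanishing trace on \(\partial H\) and hence \(\abs{u-v}\,\chi_H \in H^1(\R^N)\); either route gives a complete proof.
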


We shall also use a polarization inequality with equality cases \cite{MorozVanSchaftingenGround}*{lemma~5.3}
(without the equality cases, see \citelist{\cite{Baernstein1994}*{corollary 4}\cite{VanSchaftingenWillem2004}*{proposition 8}}).

\begin{lemma}[Polarization and nonlocal integrals]
\label{lemmaEqualityMaster}
Let \(\alpha \in (0, N)\), \(u \in L^{\frac{2 N}{N + \alpha}} (\R^N)\) and  \(H \subset \R^N\) be a closed half-space.
If \(u \ge 0\), then
\[
\int_{\R^N} \int_{\R^N} \frac{u(x)\, u(y)}{\abs{x - y}^{N - \alpha}} \,dx\,dy\\
\le \int_{\R^N} \int_{\R^N} \frac{ u^H(x)\, u^H(y)}{\abs{x - y}^{N - \alpha}} \,dx\,dy,
\]
with equality if and only if either \(u^H = u\) or \(u^H = u \circ \sigma_H\).
\end{lemma}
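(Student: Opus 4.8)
The plan is to fold both double integrals over \(\R^N \times \R^N\) onto \(H \times H\) by means of the reflection \(\sigma := \sigma_H\), and then to reduce the comparison to an elementary algebraic inequality for four nonnegative numbers. First I would record integrability: since \(\sigma\) is a measure-preserving involution and \(u \ge 0\), on each reflected pair \(\{x, \sigma x\}\) the two values of \(u^H\) are a permutation of \(u(x), u(\sigma x)\), so \(u^H\) is equimeasurable with \(u\); hence \(u^H \in L^{2N/(N+\alpha)}(\R^N)\) and, by the Hardy--Littlewood--Sobolev inequality, both double integrals are finite. Writing \(\R^N = H \cup \sigma(H)\) up to the null set \(\partial H\) and substituting \(x \mapsto \sigma x\) and/or \(y \mapsto \sigma y\) in the pieces over \(\sigma(H)\) (legitimate since all integrands are nonnegative), one obtains
\[
 \int_{\R^N}\int_{\R^N} \frac{u(x)\,u(y)}{\abs{x-y}^{N-\alpha}}\dif x\dif y
 = \int_H \int_H \Bigl( k_1 \bigl(u(x)u(y) + u(\sigma x)u(\sigma y)\bigr) + k_2 \bigl(u(x)u(\sigma y) + u(\sigma x)u(y)\bigr)\Bigr)\dif x\dif y,
\]
where \(k_1 = k_1(x,y) = \abs{x-y}^{-(N-\alpha)} = \abs{\sigma x - \sigma y}^{-(N-\alpha)}\) and \(k_2 = k_2(x,y) = \abs{x - \sigma y}^{-(N-\alpha)} = \abs{\sigma x - y}^{-(N-\alpha)}\), together with the identical identity for \(u^H\) in place of \(u\); here I use that for \(x \in H\) one has \(u^H(x) = \max(u(x), u(\sigma x))\) and \(u^H(\sigma x) = \min(u(x), u(\sigma x))\).

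Next I would isolate the two facts driving the comparison. The geometric fact: for \(x, y \in H\) one has \(\abs{x-\sigma y}^2 - \abs{x-y}^2 = 4\,x_\perp\, y_\perp \ge 0\), where \(x_\perp, y_\perp \ge 0\) are the distances of \(x, y\) to \(\partial H\); hence \(k_1(x,y) \ge k_2(x,y)\), with strict inequality outside the null set \(\{x \in \partial H\} \cup \{y \in \partial H\}\). The algebraic fact: for \(a, a', b, b' \ge 0\), setting \(M = \max(a,a')\), \(m = \min(a,a')\), \(N = \max(b,b')\), \(n = \min(b,b')\), one has \((M+m)(N+n) = (a+a')(b+b')\), so the quantity \(\delta := MN + mn - ab - a'b'\) also equals \(ab' + a'b - Mn - mN\), and a short case check gives \(\delta = \bigl((a-a')(b-b')\bigr)^- \ge 0\) (with \(t^- := \max(-t,0)\)). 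Combining, for any \(k_1 \ge k_2 \ge 0\),
\[
 k_1(MN + mn) + k_2(Mn + mN) - k_1(ab + a'b') - k_2(ab' + a'b) = (k_1 - k_2)\,\delta \ge 0.
\]

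Applying this pointwise with \(a = u(x)\), \(a' = u(\sigma x)\), \(b = u(y)\), \(b' = u(\sigma y)\) and integrating over \(H \times H\) yields the inequality. For the equality statement, equality forces \(\bigl(k_1(x,y) - k_2(x,y)\bigr)\bigl((u(x)-u(\sigma x))(u(y)-u(\sigma y))\bigr)^- = 0\) for a.e.\ \((x,y) \in H \times H\); since \(k_1 > k_2\) a.e., this says \(w(x)\,w(y) \ge 0\) for a.e.\ \((x,y) \in H \times H\), where \(w(x) := u(x) - u(\sigma x)\). If both \(\{w > 0\}\) and \(\{w < 0\}\) had positive measure, their product set would have positive measure and carry \(w(x)w(y) < 0\), a contradiction; hence \(w \ge 0\) a.e.\ on \(H\) or \(w \le 0\) a.e.\ on \(H\), i.e.\ \(u^H = u\) a.e.\ or \(u^H = u \circ \sigma_H\) a.e., respectively; the converse is immediate. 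I expect the only delicate points to be this last step — upgrading a.e.\ nonnegativity of the product \(w(x)w(y)\) on \(H \times H\) to a global sign of \(w\) and then to the two alternatives — and the careful bookkeeping of null sets (the diagonal, \(\partial H\), the set \(\{x = \sigma y\}\)) in the folding identity.
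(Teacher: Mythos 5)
Your proof is correct and complete: the folding of both double integrals onto \(H \times H\), the kernel comparison \(k_1 \ge k_2\) with strict inequality off the null set where \(x\) or \(y\) lies on \(\partial H\), the two-point inequality \(\delta = \bigl((a-a')(b-b')\bigr)^- \ge 0\), and the deduction of the equality cases from \((k_1-k_2)\,\delta = 0\) a.e.\ (via the product-set argument showing \(w = u - u\circ\sigma_H\) has a.e.\ constant sign on \(H\)) are all sound, and you correctly secure finiteness of both integrals through equimeasurability of \(u^H\) with \(u\) and the Hardy--Littlewood--Sobolev inequality. Note that the paper does not prove this lemma at all: it quotes it from \cite{MorozVanSchaftingenGround}*{lemma 5.3} (and refers to Baernstein and Van Schaftingen--Willem for the inequality without the equality cases), and the argument given there is essentially the one you reconstructed, so your proposal matches the intended proof rather than offering a genuinely different route. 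Two cosmetic remarks: your use of \(N = \max(b,b')\) clashes with the dimension \(N\) and should be renamed, and in the equality statement it is worth saying explicitly that \(u^H = u\) or \(u^H = u\circ\sigma_H\) is meant almost everywhere, which is the natural reading for \(u \in L^{\frac{2N}{N+\alpha}}(\R^N)\).
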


The last tool that we need is a characterization of symmetric functions by polarizations
\citelist{\cite{VanSchaftingenWillem2008}*{proposition 3.15}\cite{MorozVanSchaftingenGround}*{lemma 5.4}}

\begin{lemma}[Symmetry and polarization]
\label{lemmaSymmetry}
Assume that \(u \in L^2 (\R^N)\) is nonnegative. There exist \(x_0 \in \R^N\) and a nonincreasing function \(v : (0, \infty) \to \R\)
such that for almost every \(x \in \R^N\), \(u (x) = v (\abs{x - x_0})\) if and only if
for every closed half-space \(H \subset \R^N\), \(u^H = u\) or \(u^H = u \circ \sigma_H\).
\end{lemma}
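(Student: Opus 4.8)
The plan is to prove the two implications separately; the forward one is a short direct verification while the converse carries the real content. Throughout, all identities such as \(u^H = u\) are understood almost everywhere, which is harmless since the reflection \(\sigma_H\) preserves null sets.

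For the implication from symmetry to the polarization condition, suppose \(u = v (\abs{\cdot - x_0})\) almost everywhere with \(v : (0, \infty) \to \R\) nonincreasing, and let \(H\) be a closed half-space. Choosing coordinates so that \(\partial H = \{x_1 = 0\}\) and \(H = \{x_1 \le 0\}\), one checks at once that \(\abs{\sigma_H (x) - x_0}^2 - \abs{x - x_0}^2 = 4 x_1 (x_0)_1\), which is \(\ge 0\) whenever \(x, x_0 \in H\); hence if \(x_0 \in H\) then \(u (x) \ge u (\sigma_H (x))\) for \(x \in H\) and \(u (x) \le u (\sigma_H (x))\) for \(x \notin H\), so \(u^H = u\). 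If \(x_0 \notin H\), applying this to the complementary half-space gives \(u^H = u \circ \sigma_H\). In either case the right-hand condition holds.

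For the converse, assume \(u \ne 0\) (the case \(u = 0\) being trivial) and that \(u^H \in \{u, u \circ \sigma_H\}\) for every closed half-space \(H\). The first step is to observe that this dichotomy is preserved under precomposition with isometries: for any isometry \(\phi\) of \(\R^N\) and any closed half-space \(H\) one has \((u \circ \phi)^H = u^{\phi (H)} \circ \phi\) and \(\sigma_{\phi (H)} \circ \phi = \phi \circ \sigma_H\), so that \((u \circ \phi)^H\) is either \(u \circ \phi\) or \(u \circ \phi \circ \sigma_H\), and in both cases it is again of the form \(u \circ \psi\) with \(\psi\) an isometry. Iterating, for every finite sequence of closed half-spaces \(H_1, \dots, H_n\) the iterated polarization \(u^{H_1 \cdots H_n}\) equals \(u \circ \phi_n\) for some isometry \(\phi_n\) of \(\R^N\).

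The second step uses the density of iterated polarizations among rearrangements: fixing the origin as centre, there is a sequence \((H_n)_{n \in \N}\) of closed half-spaces, each containing \(0\), for which \(u^{H_1 \cdots H_n} \to u^*\) strongly in \(L^2 (\R^N)\), where \(u^*\) denotes the Schwarz symmetrization of \(u\) about \(0\) \citelist{\cite{BrockSolynin2000}\cite{VanSchaftingenWillem2004}\cite{Baernstein1994}}. Combined with the first step, \(u \circ \phi_n \to u^*\) in \(L^2 (\R^N)\), hence \(u^* \circ \phi_n^{-1} \to u\) in \(L^2 (\R^N)\); and since \(u^*\) is radial, \(u^* \circ \phi_n^{-1} = u^* (\cdot - y_n)\) for suitable points \(y_n \in \R^N\). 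The one genuine obstacle that remains is to control the centres: if \(\abs{y_n} \to \infty\) along a subsequence, then \(u^* (\cdot - y_n) \to 0\) in \(L^2_{\mathrm{loc}} (\R^N)\), contradicting its convergence in \(L^2 (\R^N)\) to \(u \ne 0\); hence \((y_n)_{n \in \N}\) is bounded, and passing to a subsequence with \(y_n \to y_0\) and using continuity of translation on \(L^2 (\R^N)\) we obtain \(u = u^* (\cdot - y_0)\) almost everywhere. As \(u^*\) is radial and nonincreasing, its profile is the required function \(v\) and \(x_0 := y_0\) the required point, which completes the plan.
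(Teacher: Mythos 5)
Your proof is correct, but it should be measured against the literature rather than against the text: the paper does not prove lemma~\ref{lemmaSymmetry}, it quotes it from \cite{VanSchaftingenWillem2008}*{proposition 3.15} and \cite{MorozVanSchaftingenGround}*{lemma 5.4}. Your forward implication is the standard pointwise computation and is fine (the a.e.\ caveat is handled correctly since \(\sigma_H\) preserves null sets). For the converse, the two ingredients are sound: the conjugation identity \((u \circ \phi)^H = u^{\phi(H)} \circ \phi\), \(\sigma_{\phi(H)} \circ \phi = \phi \circ \sigma_H\), which shows that under the dichotomy every iterated polarization of \(u\) is an isometric copy \(u \circ \phi_n\) — note this genuinely uses that the hypothesis holds for \emph{all} half-spaces, since at each step it is applied to \(\phi_{n-1}(H_n)\), and you use it that way — and the Brock--Solynin/Van Schaftingen approximation theorem giving a sequence of polarizations with \(u^{H_1 \cdots H_n} \to u^*\) in \(L^2(\R^N)\). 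Combining them, \(u^*(\cdot - y_n) \to u\) in \(L^2(\R^N)\), and your compactness argument on the centres (excluding \(\abs{y_n} \to \infty\) because \(u \neq 0\), then using continuity of translations in \(L^2\)) correctly yields \(u = u^*(\cdot - y_0)\), which is radially nonincreasing about \(y_0\). The one external input is the approximation-by-polarizations theorem, which is nontrivial but lies exactly in the sources the paper already cites for polarization theory, so it is a legitimate black box; what your route buys is a short, soft argument (isometry bookkeeping plus an \(L^2\)-compactness step) that makes the lemma self-contained modulo that theorem, instead of the more hands-on treatment of the equality alternatives carried out in the cited references. A cosmetic remark: the requirement that the half-spaces \(H_n\) contain the origin plays no role in your argument and could be dropped.
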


\begin{proof}[Proof of proposition~\ref{propositionSymmetry}]
Without loss of generality, we can assume that \(f \ge 0\) on \((0, \infty)\).
By proposition~\ref{propositionPositive}, we can assume that \(u > 0\).
By proposition~\ref{propositionConstructPath}, there exists an optimal path \(\gamma\) such that \(\gamma (1/2) = u\) and \(\gamma (t) \ge 0\) for every \(t \in [0, 1]\). For every half-space \(H\) define the path \(\gamma^H \in [0, 1] \to H^1 (\R^N)\) by \(\gamma^H (t) = (\gamma (t))^H\). By lemma~\ref{lemmaPolarizationGradient}, \(\gamma^H \in C([0, 1]; H^1 (\R^N))\). Observe that since \(F\) is nondecreasing, \(F (u^H) = F (u)^H\), and therefore, for every \(t \in [0, 1]\), by lemmas~\ref{lemmaPolarizationGradient} and \ref{lemmaEqualityMaster},
\[
 \mathcal{I} (\gamma^H (t)) \le \mathcal{I} (\gamma (t)).
\]
Observe that \(\gamma^H \in \Gamma\) so that
\[
\max_{t \in [0, 1]} \mathcal{I}(\gamma^H (t)) \ge b.
\]
Since for every \(t \in [0, 1] \setminus \{1/2\}\),
\[
 \mathcal{I} (\gamma^H (t)) \le \mathcal{I} (\gamma (t)) < b,
\]
we have
\[
 \mathcal{I} (\gamma^H (\tfrac{1}{2})) = \mathcal{I} (\gamma (\tfrac{1}{2})) = b.
\]
By lemmas~\ref{lemmaPolarizationGradient} and \ref{lemmaEqualityMaster}, we have either \(F (u)^H = F (u)\) or \(F (u^H) = F (u \circ \sigma_H)\) in \(\R^N\).
Assume that \(F (u)^H = F (u)\).
Then, we have for every \(x \in H\),
\[
  \int_{u (\sigma_H (x) )}^{u(x)} f (s) \dif s = F (u (x)) - F (u (\sigma_H (x)))
\ge 0;
\]
this implies that either \(u (\sigma_H (x) ) \le u (x)\) or \(f = 0\) on \([u (x), u (\sigma_H (x) )]\). In particular, \(f (u^H) = f (u)\) on \(\R^N\).
By lemma~\ref{lemmaOptimalPathCritical} applied to \(\gamma^H\), we have \(\mathcal{I}' (u^H) = 0\); and therefore,
\[
  - \Delta u^H + u^H = \bigl(I_\alpha \ast F (u^H)\bigr) f (u^H)
= \bigl(I_\alpha \ast F (u)\bigr) f (u).
\]
Since \(u\) satisfies \eqref{problemMain}, we conclude that
\(u^H = u\).

If \(F (u^H) = F (u \circ \sigma_H)\), we conclude similarly that \(u^H = u \circ \sigma_H\).
Since this holds for arbitrary \(H\), we conclude by lemma~\ref{lemmaSymmetry} that \(u\) is radial and radially decreasing.
\end{proof}

\section{Alternative proof of the existence}

\label{sectionSymmetricMountainPass}
In this section we sketch an alternative proof of the existence of a nontrivial solution \(u \in H^1 (\R^N) \setminus \{0\}\) such that \(c \le \mathcal{I} (u) \le b\), under the additional symmetry assumption of theorem~\ref{theoremSymmetry} and in the spirit of
the symmetrization arguments of H.\thinspace Berestycki and P.-L.\thinspace Lions \cite{BerestyckiLions1983}*{pp.~325-326}.
The advantage of this approach is that it bypasses the concentration compactness argument
and delays the Poho\v zaev identity which is still needed to prove that \(b \le c\).

\begin{proof}[Proof of theorem~\ref{theoremExistence} under the additional assumptions of theorem~\ref{theoremSymmetry}]
In addition to \eqref{assumptWeakf}, \eqref{assumptStrongf} and \eqref{assumptNontrivialf},
assume that \(f\) is an odd function which has constant sign on \((0, \infty)\).
With this additional assumption,
\[
 \mathcal{I} \circ \Phi(\sigma, \abs{v}^H) \le \mathcal{I}\circ \Phi (\sigma, v).
\]
Therefore, by the symmetric variational principle \cite{VanSchaftingen2005}*{theorem 3.2}, we can prove as in the proof of proposition~\ref{propositionMinimax} the existence of a sequences \((u_n)_{n \in \N}\) and \((v_n)_{n \in \N}\) such that as \(n \to \infty\),
\begin{align*}
\mathcal{I} (u_n) & \to 0,\\
  \mathcal{I}' (u_n) &\to 0 & & \text{in \(\bigl(H^1 (\R^N)\bigr)'\)},\\
  \mathcal{P} (u_n) &\to 0,\\
  u_n - v_n &\to 0 & & \text{in \(L^2 (\R^N) \cap L^\frac{2 N}{N - 2} (\R^N)\)},
\end{align*}
and \(v_n\) is radial for every \(n \in \N\).

As previously, the sequence \((u_n)_{n \in \N}\) is bounded in \(H^1 (\R^N)\); by the P\'olya--Szeg\H o inequality, the sequence \((v_n)_{n \in \N}\) is also bounded in \(H^1 (\R^N)\). Since \(v_n\) is radial for every \(n \in \N\), the sequence \((v_n)_{n \in \N}\) is compact in \(L^p (\R^N)\) for every \(p \in (2, \frac{2 N}{N - 2})\) \cite{WillemMinimax}*{Corollary 1.26}.
 As \(u_n - v_n \to 0\) in \(L^2 (\R^N) \cap L^\frac{2 N}{N - 2} (\R^N)\), the sequence \((u_n)_{n \in \N}\) is also compact \(L^p (\R^N)\) for every \(p \in (2, \frac{2 N}{N - 2})\).

In view of \eqref{assumptStrongf}, this implies that
\(
  F (u_n) \to F (u)
\) as \(n \to \infty\) in \(L^\frac{2 N}{N + \alpha} (\R^N)\) and thus
\begin{equation}
\label{eqLimitNonlinear}
 \lim_{n \to \infty} \int_{\R^N} \bigl(I_\alpha \ast F (u_n)\bigr) F (u_n)=\int_{\R^N} \bigl(I_\alpha \ast F (u)\bigr) F (u) > 0.
\end{equation}
Now one can prove than that \(u_n (\cdot - a_n)\) converges to a nontrivial solution \(u \in H^1 (\R^N) \setminus \{0\}\)
as in the proof of proposition~\ref{propositionPPS}. By \eqref{eqLimitNonlinear}, it also follows that
\[
 c \le \mathcal{I} (u) \le b.
\]
Finally, employing the Poho\v zaev identity as in the proof of theorem~\ref{theoremExistence} allows to conclude that \(c = b\).
\end{proof}

\begin{bibdiv}
\begin{biblist}

\bib{Baernstein1994}{article}{
   author={Baernstein, Albert, II},
   title={A unified approach to symmetrization},
   conference={
      title={Partial differential equations of elliptic type},
      address={Cortona},
      date={1992},
   },
   book={
      series={Sympos. Math., XXXV},
      publisher={Cambridge Univ. Press},
      place={Cambridge},
   },
   date={1994},
   pages={47--91},
}

\bib{BartschWethWillem2005}{article}{
   author={Bartsch, Thomas},
   author={Weth, Tobias},
   author={Willem, Michel},
   title={Partial symmetry of least energy nodal solutions to some
   variational problems},
   journal={J. Anal. Math.},
   volume={96},
   date={2005},
   pages={1--18},
}

\bib{BerestyckiLions1983}{article}{
   author={Berestycki, H.},
   author={Lions, P.-L.},
   title={Nonlinear scalar field equations. I. Existence of a ground state},
   journal={Arch. Rational Mech. Anal.},
   volume={82},
   date={1983},
   number={4},
   pages={313--345},
   issn={0003-9527},
}

\bib{Brezis}{book}{
   author={Brezis, Haim},
   title={Functional analysis, Sobolev spaces and partial differential
   equations},
   series={Universitext},
   publisher={Springer},
   place={New York},
   date={2011},
   pages={xiv+599},
   isbn={978-0-387-70913-0},
%    review={\MR{2759829 (2012a:35002)}},
}

\bib{BrezisKato1979}{article}{
   author={Brezis, Ha{\"{\i}}m},
   author={Kato, Tosio},
   title={Remarks on the Schr\"odinger operator with singular complex
   potentials},
   journal={J. Math. Pures Appl. (9)},
   volume={58},
   date={1979},
   number={2},
   pages={137--151},
   issn={0021-7824},
%    review={\MR{539217 (80i:35135)}},
}

\bib{BrockSolynin2000}{article}{
   author={Brock, Friedemann},
   author={Solynin, Alexander Yu.},
   title={An approach to symmetrization via polarization},
   journal={Trans. Amer. Math. Soc.},
   volume={352},
   date={2000},
   number={4},
   pages={1759--1796},
   issn={0002-9947},
}

\bib{Choquard-Stubbe-Vuffray-2008}{article}{
   author={Choquard, Philippe},
   author={Stubbe, Joachim},
   author={Vuffray, Marc},
   title={Stationary solutions of the Schr\"odinger-Newton model---an ODE
   approach},
   journal={Differential Integral Equations},
   volume={21},
   date={2008},
   number={7-8},
   pages={665--679},
   issn={0893-4983},
}

\bib{CingolaniClappSecchi2011}{article}{
  author = {Cingolani, Silvia},
  author = {Clapp, M\'onica },
  author = {Secchi, Simone},
  title = {Multiple solutions to a magnetic nonlinear Choquard equation
},
  journal = {Z. Angew. Math. Phys.},
  doi = {10.1007/s00033-011-0166-8},
}

\bib{CingolaniSecchiSquassina2010}{article}{
   author={Cingolani, Silvia},
   author={Secchi, Simone},
   author={Squassina, Marco},
   title={Semi-classical limit for Schr\"odinger equations with magnetic
   field and Hartree-type nonlinearities},
   journal={Proc. Roy. Soc. Edinburgh Sect. A},
   volume={140},
   date={2010},
   number={5},
   pages={973--1009},
   issn={0308-2105},
}

\bib{GenevVenkov2012}{article}{
   author={Genev, Hristo},
   author={Venkov, George},
   title={Soliton and blow-up solutions to the time-dependent
   Schr\"odinger-Hartree equation},
   journal={Discrete Contin. Dyn. Syst. Ser. S},
   volume={5},
   date={2012},
   number={5},
   pages={903--923},
   issn={1937-1632},
%   review={\MR{2877355}},
%   doi={10.3934/dcdss.2012.5.903},
}

\bib{HirataIkomaTanaka}{article}{
   author={Hirata, Jun},
   author={Ikoma, Norihisa},
   author={Tanaka, Kazunaga},
   title={Nonlinear scalar field equations in \(\mathbb{R}^N\): mountain pass
   and symmetric mountain pass approaches},
   journal={Topol. Methods Nonlinear Anal.},
   volume={35},
   date={2010},
   number={2},
   pages={253--276},
   issn={1230-3429},
%    review={\MR{2676816 (2011h:35104)}},
}

\bib{Jeanjean1997}{article}{
   author={Jeanjean, Louis},
   title={Existence of solutions with prescribed norm for semilinear
   elliptic equations},
   journal={Nonlinear Anal.},
   volume={28},
   date={1997},
   number={10},
   pages={1633--1659},
   issn={0362-546X},
%    doi={10.1016/S0362-546X(96)00021-1},
}

\bib{Jeanjean1999}{article}{
   author={Jeanjean, Louis},
   title={On the existence of bounded Palais-Smale sequences and application
   to a Landesman-Lazer-type problem set on ${\bf R}^N$},
   journal={Proc. Roy. Soc. Edinburgh Sect. A},
   volume={129},
   date={1999},
   number={4},
   pages={787--809},
   issn={0308-2105},
}

\bib{JeanjeanTanaka2003}{article}{
   author={Jeanjean, Louis},
   author={Tanaka, Kazunaga},
   title={A remark on least energy solutions in \(\mathbf{R}^N\)},
   journal={Proc. Amer. Math. Soc.},
   volume={131},
   date={2003},
   number={8},
   pages={2399--2408},
   issn={0002-9939},
%    doi={10.1090/S0002-9939-02-06821-1},
}

\bib{Kavian1993}{book}{
   author={Kavian, Otared},
   title={Introduction \`a la th\'eorie des points critiques et applications
   aux probl\`emes elliptiques},
   series={Math\'ematiques \& Applications},
   volume={13},
   publisher={Springer},
   place={Paris},
   date={1993},
   pages={viii+325},
   isbn={2-287-00410-6},
}

\bib{Lieb-1977}{article}{
   author={Lieb, Elliott H.},
   title={Existence and uniqueness of the minimizing solution of Choquard's
   nonlinear equation},
   journal={Studies in Appl. Math.},
   volume={57},
   date={1976/77},
   number={2},
   pages={93--105},
}

\bib{LiebLoss2001}{book}{
   author={Lieb, Elliott H.},
   author={Loss, Michael},
   title={Analysis},
   series={Graduate Studies in Mathematics},
   volume={14},
   edition={2},
   publisher={American Mathematical Society},
   place={Providence, RI},
   date={2001},
   pages={xxii+346},
   isbn={0-8218-2783-9},
}

\bib{Lions-1980}{article}{
   author={Lions, P.-L.},
   title={The Choquard equation and related questions},
   journal={Nonlinear Anal.},
   volume={4},
   date={1980},
   number={6},
   pages={1063--1072},
   issn={0362-546X},
}

% \bib{Lions1984CC1}{article}{
%    author={Lions, P.-L.},
%    title={The concentration-compactness principle in the calculus of
%    variations. The locally compact case. I},
%    journal={Ann. Inst. H. Poincar\'e Anal. Non Lin\'eaire},
%    volume={1},
%    date={1984},
%    number={2},
%    pages={109--145},
%    issn={0294-1449},
% }

\bib{Lions1984CC2}{article}{
   author={Lions, P.-L.},
   title={The concentration-compactness principle in the calculus of
   variations. The locally compact case. II},
   journal={Ann. Inst. H. Poincar\'e Anal. Non Lin\'eaire},
   volume={1},
   date={1984},
   number={4},
   pages={223--283},
   issn={0294-1449},
}

\bib{Ma-Zhao-2010}{article}{
   author={Ma Li},
   author={Zhao Lin},
   title={Classification of positive solitary solutions of the nonlinear
   Choquard equation},
   journal={Arch. Ration. Mech. Anal.},
   volume={195},
   date={2010},
   number={2},
   pages={455--467},
   issn={0003-9527},
}

\bib{Menzala-1980}{article}{
   author={Menzala, Gustavo Perla},
   title={On regular solutions of a nonlinear equation of Choquard's type},
   journal={Proc. Roy. Soc. Edinburgh Sect. A},
   volume={86},
   date={1980},
   number={3-4},
   pages={291--301},
   issn={0308-2105},
}

\bib{Menzala-1983}{article}{
   author={Menzala, Gustavo Perla},
   title={On the nonexistence of solutions for an elliptic problem in
   unbounded domains},
   journal={Funkcial. Ekvac.},
   volume={26},
   date={1983},
   number={3},
   pages={231--235},
   issn={0532-8721},
}

\bib{Moroz-Penrose-Tod-1998}{article}{
   author={Moroz, Irene M.},
   author={Penrose, Roger},
   author={Tod, Paul},
   title={Spherically-symmetric solutions of the Schr\"odinger-Newton
   equations},
   journal={Classical Quantum Gravity},
   volume={15},
   date={1998},
   number={9},
   pages={2733--2742},
   issn={0264-9381},
}

\bib{MorozVanSchaftingenGround}{unpublished}{
  author = {Moroz, Vitaly},
  author = {Van Schaftingen, Jean},
  title = {Groundstates of nonlinear Choquard equations: existence, qualitative properties and decay asymptotics},
  note = {arXiv:1205.6286},
}

\bib{Pekar-1954}{book}{
   author={Pekar, S.},
   title={Untersuchung {\"u}ber die Elektronentheorie der Kristalle},
   publisher={Akademie Verlag},
   place={Berlin},
   date={1954},
   pages={184},
}

\bib{Struwe2008}{book}{
   author={Struwe, Michael},
   title={Variational methods},
   series={Ergebnisse der Mathematik und ihrer Grenzgebiete. 3. Folge},
   volume={34},
   edition={4},
   note={Applications to nonlinear partial differential equations and
   Hamiltonian systems},
   publisher={Springer-Verlag},
   place={Berlin},
   date={2008},
   pages={xx+302},
   isbn={978-3-540-74012-4},
}

\bib{Tod-Moroz-1999}{article}{
   author={Tod, Paul},
   author={Moroz, Irene M.},
   title={An analytical approach to the Schr\"odinger-Newton equations},
   journal={Nonlinearity},
   volume={12},
   date={1999},
   number={2},
   pages={201--216},
   issn={0951-7715},
}

\bib{Trudinger1968}{article}{
   author={Trudinger, Neil S.},
   title={Remarks concerning the conformal deformation of Riemannian
   structures on compact manifolds},
   journal={Ann. Scuola Norm. Sup. Pisa (3)},
   volume={22},
   date={1968},
   pages={265--274},
}

\bib{VanSchaftingen2005}{article}{
   author={Van Schaftingen, Jean},
   title={Symmetrization and minimax principles},
   journal={Commun. Contemp. Math.},
   volume={7},
   date={2005},
   number={4},
   pages={463--481},
   issn={0219-1997},
}

\bib{VanSchaftingenWillem2004}{article}{
   author={Van Schaftingen, Jean},
   author={Willem, Michel},
   title={Set transformations, symmetrizations and isoperimetric
   inequalities},
   conference={
      title={Nonlinear analysis and applications to physical sciences},
   },
   book={
      publisher={Springer Italia, Milan},
   },
   date={2004},
   pages={135--152},
}

\bib{VanSchaftingenWillem2008}{article}{
   author={Van Schaftingen, Jean},
   author={Willem, Michel},
   title={Symmetry of solutions of semilinear elliptic problems},
   journal={J. Eur. Math. Soc. (JEMS)},
   volume={10},
   date={2008},
   number={2},
   pages={439--456},
   issn={1435-9855},
}

\bib{WillemMinimax}{book}{
   author={Willem, Michel},
   title={Minimax theorems},
   series={Progress in Nonlinear Differential Equations and their
   Applications, 24},
   publisher={Birkh\"auser Boston Inc.},
   place={Boston, MA},
   date={1996},
   pages={x+162},
}

\bib{Willem2007}{book}{
   author={Willem, Michel},
   title={Principes d'analyse fonctionnelle},
   series={Nouvelle Biblioth\`eque Math\'ematique},
   volume = {9},
   publisher={Cassini, Paris},
   date={2007},
   pages={iv+198},
   isbn={978-2-84225-120-8},
}

\end{biblist}
\end{bibdiv}

\end{document}